\documentclass[reqno]{amsart}
\newcommand \datum {\hfill {\small{\red{Version of February 4, 2026}}}}
\usepackage{amssymb,latexsym}
\usepackage{amsmath}
\usepackage{hyperref}
\usepackage{url}
\usepackage{graphicx}
\usepackage[dvipsnames]{xcolor}
\usepackage{enumerate}
\usepackage{enumitem}
\numberwithin{equation}{section}
\theoremstyle{plain}
 \newtheorem{theorem}{Theorem}[section]
 
 \newtheorem{lemma}[theorem]{Lemma}
 \newtheorem{corollary}[theorem]{Corollary}
 
 \newtheorem{observation}[theorem]{Observation}

\theoremstyle{definition}
 
 \newtheorem{convention}[theorem]{Convention}
\theoremstyle{remark}

\newcommand{\midvskip}{\vskip 5.5pt plus 1.5pt minus 1.5pt}

\providecommand{\logicbreak}{\par\vspace{0.5\baselineskip}}
\newcommand \nothing[1] {}

\newcommand \ul[1] {\underline{#1}}
\newcommand \ncu[1] {\textup{uc}{(#1)}}
\newcommand \ncd[1] {\textup{lc}{(#1)}}
\newcommand \jmref[1] {\textup{(jm\ref{#1})}}
\newcommand \eeqref[1] {\textup{(e\ref{#1})}}
\newcommand \ceqref[1] {\textup{(c\ref{#1})}}
\newcommand \peqref[1] {\textup{(p\ref{#1})}}
\newcommand \rgsiso{\cong_{\textup{rgs}}} 
\newcommand \dual{^{\textup{dual}}}
\newcommand \width[1]{\textup{width}(#1)} %
\newcommand \Bndl[1] {\textup{Bndl}(#1)}
\newcommand \Bndr[1] {\textup{Bndr}(#1)}
\newcommand \thejipair{\textup{pr}} %
\newcommand \jipair[1]{\thejipair(#1)} %
\newcommand \ajipair[2]{\thejipair_{#1}(#2)} %
\renewcommand \phi{\varphi}
\newcommand \theFoot{\textup{Foot}}
\newcommand \Foot [2]{\theFoot(#1,#2)}
\newcommand \SFoot [3]{\theFoot_{#1}(#2,#3)}
\newcommand \Nplu {{\mathbb N^+}}
\newcommand \Nnul {{\mathbb N_0}}
\newcommand{\tbf}{\textbf}
\newcommand{\set}[1]{\{#1\}}
\newcommand\figwidthcoeff{0.95}
\newcommand \lcd{\textup{lcd}} 
\newcommand \cd[1]{\textup{cd}(#1)}
\newcommand \lnc [2] {\textup{lnc}(#1,#2)} 
\newcommand \Con [1] {\textup{Con}(#1)}  
\newcommand \Jir [1] {\textup{Ji}(#1)}
\newcommand \Mir [1] {\textup{Mi}(#1)}
\newcommand \Jr [1] {\textup{Jr}(#1)}
\newcommand \Mr [1] {\textup{Mr}(#1)}
\newcommand \chain [1] {\mathsf{C}_{#1}}
\newcommand \Nar [1] {\textup{Nar}(#1)}
\newcommand \Core [1] {\textup{Cor}(#1)}
\newcommand \gsum {\mathrel{\dotplus}}
\newcommand \Circ [1] {\textup{Circ}(#1)}
\newcommand \idl [1] {\textup{idl}(#1)}
\newcommand \fil [1] {\textup{fil}(#1)}
\newcommand \idla [2] {\textup{idl}_{#1}(#2)}
\newcommand \fila [2] {\textup{fil}_{#1}(#2)}
\newcommand \Idl [1] {\textup{Idl}(#1)}
\newcommand \con {\textup{con}}
\newcommand \Edge [1] {\textup{Edge}(#1)}
\newcommand \pup {\nearrow_\textup{p}}
\newcommand \pdn {\searrow_\textup{p}}
\newcommand \pers {\sim_{\textup{p}}}
\newcommand \defiff {\overset{\scriptscriptstyle{\textup{def}}}\iff}
\newcommand \locov[1] {{#1}_\ast} 
\newcommand \rleq{\leq_\rho}
\newcommand \req{\equiv_\rho}
\newcommand \quot[1] {``#1''}
\renewcommand \epsilon{\varepsilon}
\newcommand \doi[1] {\href{https://doi.org/#1}{DOI: #1}} 

\newcommand \red[1]{{\textcolor{red}{#1}\color{black}}}

\begin{document}

\title[Lattices with congruence densities larger than $3/32$]
{\boldmath Lattices with congruence densities larger than $3/32$}


\author[G.\ Cz\'edli]{G\'abor Cz\'edli}
\email{czedli@math.u-szeged.hu}
\urladdr{http://www.math.u-szeged.hu/~czedli/}
\address{University of Szeged, Bolyai Institute. 
Szeged, Aradi v\'ertan\'uk tere 1, HUNGARY 6720}

\begin{abstract} 
By a 1997 result of R.\ Freese, an $n$-element lattice has at most $2^{n-1}$ congruences. This motivates us to define the \emph{congruence density} $\cd L$  of a finite $n$-element lattice as $|\Con L|/2^{n-1}$, where $|\Con L|$ is the number of elements of the congruence lattice $\Con L$ of $L$. We prove that whenever $L$ is a finite lattice with $\cd L>3/32$, then $L$ has the same number of join-irreducible and meet-irreducible elements. This result is sharp, since there exists a six-element lattice $R_6$ with $\cd {R_6}=3/32$ but fewer join-irreducible than meet-irreducible elements.
By R.\ Freese, C.\ Mure\c{s}an, J.\ Kulin, and the present author's results, lattices with congruence densities  
larger than $1/8$ have already been described. Here we decrease the lower threshold from $1/8$ to $3/32$. That is, we describe all finite lattices $L$ such that $\cd L>3/32$. As a corollary, we give the $k$th largest number of congruences of $n$-element lattices for 
$n>8$ and $k\in\set{n+1, n+2, n+3,n+4}$.
\end{abstract}

\dedicatory{This paper is dedicated to Eszter K.\ Horv\'ath, my coauthor on ten papers.}

\thanks{This research was supported by the National Research, Development and Innovation Fund of Hungary, under funding scheme K 138892.  
\hfill{\red{\tbf{\datum}}}}

\subjclass {06B10} 

\keywords{Large numbers of lattice congruences, large congruence density, size of congruence lattice, lattices with many congruences, number of join-irreducible elements, number of meet-irreducible elements, finite lattices}

\maketitle

\section{Introduction and the first theorem}\label{sect:intro}
We assume that the reader is familiar with the basic concepts of lattice theory or universal algebra.
Several auxiliary results from  Cz\'edli \cite{czg864} (open access at the time of writing) 
and from other sources will be used, but all of them will be recalled when needed.  

All lattices considered in this paper are assumed to be \emph{finite} and $L$ always denotes a finite \emph{lattice}, even when these conventions are not explicitly repeated.
For a lattice $L=(L;\vee,\wedge)$, we denote by $\Con L$ the lattice of congruence relations of $L$, in short, the \emph{congruence lattice} of $L$. Then the number of congruences of $L$ is $|\Con L|$. As the main concept in this paper, we define the \emph{congruence density} $\cd L$ of a finite lattice $L$ by $\cd L:=|\Con L|/2^{|L|-1}$. 
Since $|\Con L|\leq 2^{|L|-1}$ by a result of Freese \cite{freese}, $\cd L\leq 1$. 
Note that $\cd L=1$ if and only if $L$ is a chain.
Our first goal is to prove the following theorem.

\begin{theorem}\label{thm:jm} Every finite lattice $L$ with $\cd L>3/32$ has the following two properties.
\begin{enumerate}
\renewcommand{\labelenumi}{\textup{(jm\theenumi)}}
  \item\label{jm1} 
 $L$ has the same number of join-irreducible and meet-irreducible elements.
  \item\label{jm2} 
 More generally, for every nonnegative integer $k$, the number of elements of $L$ covering exactly $k$  elements equals the number of elements covered by exactly $k$ elements.
\end{enumerate}
\end{theorem}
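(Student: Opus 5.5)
Since (jm1) is the special case $k=1$ of (jm2), I will only prove (jm2). The plan is to use the known structure of lattices with many congruences. The relevant results, from Cz\'edli \cite{czg864} and from the earlier work of Freese, Mure\c{s}an, Kulin and the present author, are these:
\begin{itemize}
\item $|\Con L| = 2^{|L|-1}$ only for chains;
\item the next values are $2^{|L|-2}$, $5\cdot 2^{|L|-5}$, $2^{|L|-3}$, and so on, and they are attained exactly by glued sums of a chain with a small "core" lattice.
\end{itemize}
I would first recall the precise form of the description of lattices with $\cd L>1/8$. It says that such an $L$ is a glued sum $\chain{}\gsum \dots$ of chains and at most one or two cores taken from a short explicit list: $B_2=2^2$, the pentagon $N_5$, and perhaps a couple of others. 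Next I would extend this classification to the range $3/32<\cd L\le 1/8$. The aim is that all such $L$ are again glued sums of chains with cores from a finite explicit list. That list should include two copies of $B_2$, the lattice $B_2\gsum$ chain with a doubled element, and similar lattices, but \emph{not} $R_6$, since $\cd{R_6}=3/32$ exactly.

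The key tool for the extension is a counting lemma. If $L$ has a narrows (an element comparable to all others), then $L=L_1\gsum L_2$, and $|\Con L|=|\Con L_1|\cdot|\Con L_2|$. Since $|L|=|L_1|+|L_2|-1$, this gives $\cd L=\cd{L_1}\cd{L_2}$. So congruence density is multiplicative over glued sums. A glued summand with $\cd{}\le 3/32$ kills the whole lattice, and a product of two factors each at most $1/2$ leaves little room. Concretely, non-chain glued-indecomposable summands have density at most $1/2$ (in fact exactly $1/2$ for $B_2$). Hence at most three of them can occur, and the admissible combinations are constrained by $\prod \cd{L_i}>3/32$.

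It then remains to classify the glued-indecomposable lattices $K$ with $\cd K>3/32$. I would do this by bounding $|\Con K|$ through the number of join-irreducible congruences. Each congruence is determined by a set of prime intervals closed under perspectivity-generated forcing, so $|\Con K|\le 2^{m}$, where $m$ is the number of classes of prime intervals under the transitive closure of prime perspectivity. If $K$ is indecomposable and has no narrows, then its cover graph has enough cycles to force many collapses, and $m$ is small. The estimate $|\Con K|\le 2^{|K|-1}\cdot 3/32$ should then follow except for a finite list of small lattices, each checked by hand. I expect this step to be the main obstacle: it requires a careful case analysis of how many prime intervals fall into the same perspectivity class, especially for lattices with $6$–$8$ elements.

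Finally, once every $L$ with $\cd L>3/32$ is known to be a glued sum of chains and cores from the explicit list, (jm2) reduces to the cores. In a glued sum, the number of lower covers of an element is computed within the summand in which the element is not the top. Gluing points have exactly one lower cover and one upper cover unless they are the top or bottom of a core, and those cases match up symmetrically. So it suffices to verify, for each core in the list ($B_2$, $N_5$, and the few others), that the multiset of lower-cover counts equals the multiset of upper-cover counts. This is a direct inspection of finitely many Hasse diagrams.
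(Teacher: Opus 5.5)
Your closing reduction is sound. Densities multiply over glued sums, and, writing $\ncd k$ and $\ncu k$ for the numbers of elements with exactly $k$ lower and upper covers, these counts are additive over glued sums. So \textup{(jm2)} for every core in a \emph{complete} list would give the theorem, and granting the paper's Main Theorem your inspection step would finish. The gap is that you never produce that list. The step you call ``the main obstacle'' is essentially the whole Main Theorem, and the tool you propose cannot work in the relevant range. A bound $|\Con K|\le 2^m$, with $m$ the number of classes of prime intervals, is a power of two. For $|K|=n$ it certifies $\cd K\le 4/64$ when $m\le n-5$, and nothing better than $8/64$ otherwise. So it can never separate $\cd K>6/64$ from $\cd K\le 6/64$.

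The sharpness example shows this concretely. $\Con{R_6}$ is a three-element chain with two join-irreducibles, so any such class count gives at best $|\Con{R_6}|\le 4$, i.e.\ $\cd{R_6}\le 8/64$. Your method therefore could not exclude $R_6$, the very lattice violating \textup{(jm1)}. What is needed is the \emph{order} among join-irreducible congruences, as in Lemma~\ref{lemma:lfkbbZmbt}: a congruence-determining edge set with at most $n-4$ elements that is not an antichain under $\nu(L)$ forces $|\Con L|\le 3\cdot 2^{n-6}$. Also, the exceptions do not form a finite list, since every member of every $\Circ n$ has density above $8/64$. The infinitely many other candidates (planar lattices with several chord edges, width-three lattices of any size) need general exclusion arguments such as Lemma~\ref{lemma:circedgglu} and the dismantlable-extension inequality, not hand checks.

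The idea you are missing is that Theorem~\ref{thm:jm} needs no classification. Count edges by their tops and by their bottoms. If every element has at most two lower and at most two upper covers, then $|\Edge L|=|L|-1+|\Jr L|=|L|-1+|\Mr L|$, hence $|\Jr L|=|\Mr L|$. This gives $\ncd 2=|\Jr L|=|\Mr L|=\ncu 2$ and $\ncd 1=|L|-1-|\Jr L|=\ncu 1$, while $k=0$ and $k\ge 3$ are trivial. So everything reduces to showing that, when $\cd L>6/64$, an element with at least three lower (or upper) covers forces $\Core L\cong M_3$, for which \textup{(jm2)} is checked directly. That is what the paper proves in Lemmas~\ref{lemma:3lowcov} and~\ref{lemma-3a-cha}, with Lemma~\ref{lemma:nofour} excluding four covers. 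These lemmas use exactly the non-antichain counting that your $2^m$ bound lacks, and together they are far less than the full description of lattices with $\cd L>3/32$.
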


The letters in \quot{jm} come from \quot{\ul join-irreducible} and  \quot{\ul meet-irreducible}.
As indicated by the phrase \quot{more generally}, property \jmref{jm2} straightforwardly implies \jmref{jm1}. 
By Dilworth's Covering Theorem \cite{Dilworth} (see also Kung \cite[Theorem 464]{Kung}), every finite modular lattice has property \jmref{jm2} and thus also property \jmref{jm1}. 
Note that the modularity of $L$ and the condition $\cd L>3/32$ are independent, that is, neither implies the other. 

As the following observation (to be proved later) shows, Theorem \ref{thm:jm} is sharp.

\begin{observation}\label{obs:R6} The six-element lattice $R_6$ in the middle of Figure \ref{figalma} has only three join-irreducible but four meet-irreducible elements, and $\cd{R_6}=3/32$.
\end{observation}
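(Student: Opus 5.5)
The observation has two parts: an irreducible count and a congruence count. The count is read directly off the diagram of $R_6$ in Figure~\ref{figalma}. Since $|R_6|=6$, we have $2^{|R_6|-1}=32$, so $\cd{R_6}=3/32$ is equivalent to $|\Con{R_6}|=3$. A $3$-element lattice is a chain, and $\Con{R_6}$ is distributive. So the claim to prove is that $R_6$ has exactly one congruence other than the two trivial ones.

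\emph{Irreducibles.} An element of a finite lattice is join-irreducible exactly when it covers exactly one element, and meet-irreducible exactly when exactly one element covers it. I will list, for each of the six elements of $R_6$ as drawn, its lower covers and its upper covers. Then I count the elements with exactly one lower cover (expected: three) and those with exactly one upper cover (expected: four). In particular $R_6$ is not self-dual, and it violates \jmref{jm2} for $k=1$. This part is routine.

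\emph{Congruences.} I will use the standard description of $\Con L$ for finite $L$. Every congruence is the join of principal congruences $\con(x,y)$ with $x\prec y$ a prime interval. $\Con L$ is isomorphic to the lattice of down-sets of the poset of join-irreducible congruences, and these are exactly the $\con(\locov p,p)$ with $p\in\Jir L$. Since $R_6$ has three join-irreducible elements, there are at most three such congruences. I must show they form a $2$-element chain, meaning two of them coincide and the remaining two are comparable. Then there are exactly three down-sets, so $|\Con{R_6}|=3$.

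Concretely, for each $p\in\Jir{R_6}$ I will compute $\con(\locov p,p)$ as a partition. I start from the block $\{\locov p,p\}$ and close under the rules $x\equiv y\Rightarrow x\vee z\equiv y\vee z$ and $x\wedge z\equiv y\wedge z$. Equivalently, I follow perspectivity and projectivity of prime intervals: two prime intervals generate the same congruence if they are projective, and one generates a larger congruence if the other is "forced" by it via the closure. I expect one of the atoms, probably one with a complement-like relation to the rest, to generate the full congruence $\nabla$. The other join-irreducibles should generate a single proper nontrivial congruence, whose quotient is a $2$-element chain. To confirm that this candidate partition really is a congruence, I check that its blocks are intervals and that the quotient order is compatible with joins and meets of the representatives.

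\emph{Main obstacle.} The only nontrivial point is making sure no additional congruence appears. The two non-$\nabla$ join-irreducible congruences must truly coincide, rather than merely both being proper. Otherwise their join would produce a fourth congruence, and a small variant of the lattice gives $|\Con|=5$. So I will explicitly exhibit a chain of perspectivities linking the corresponding prime intervals. I will also verify that the proper one is not already $\nabla$, by exhibiting the $2$-block partition as a genuine congruence. This gives $\Con{R_6}\cong\chain 3$, and hence $\cd{R_6}=3/32$.
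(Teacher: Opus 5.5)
Your proposal is correct and follows essentially the same route as the paper, which reads the irreducibles off the diagram and combines $\Con L\cong\Idl{\Jir{\Con L};\leq}$ with Gr\"atzer's perspectivity lemma to see that $\Jir{\Con{R_6}}$ is a two-element chain, whence $|\Con{R_6}|=3$. Your expectations also check out: write the atoms of $R_6$ as $y,z,w$, with $j=z\vee w$ and $1=j\vee y$. Then $[0,z]\pup[y,1]\pdn[0,w]$ gives $\con(0,z)=\con(0,w)$, which is the congruence with blocks $\idl j$ and $\set{y,1}$, while $\con(0,y)$ is the total relation.
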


Still in connection with Theorem \ref{thm:jm}, let us point out that $\cd L$ has a similar effect on another lattice property: Cz\'edli \cite{czg-manconplan} proved that for every finite lattice $L$, 
\begin{equation}
\text{if }\cd L>1/16,\text{ then }L\text{ is planar}.
\label{eq:233planar}
\end{equation}
The eight-element Boolean lattice, which is not planar and has congruence density $1/16$, witnesses that this result is also sharp.

Results proved by Freese \cite{freese}, Mure\c{s}an and Kulin \cite{muresankulin}, and Cz\'edli \cite{czg-lconl2} and \cite{czg864} 
present a reasonable structural description of all finite lattices $L$ such that  $\cd L>1/8$; this description is obtained by combining Lemmas 3 and 10 of Cz\'edli \cite{czg864}, and it is presented here as Lemma \ref{lemma:earlylist}. 
Motivated by this result and Theorem \ref{thm:jm}, our main goal is to improve the threshold from $1/8$ to $3/32$.  

Although the passage from $1/8=4/32$ to $3/32$ may appear minor, it requires nontrivial work. In particular, while the lattices with $\cd L>1/8$ are of width at most 2 (i.e., without 3-element antichains), this no longer holds for lattices with $\cd L>3/32$. 
The passage 
in question also requires two new methods. 
First,  in this paper, we occasionally work with \quot{distant} elements of $\Foot a b $ (to be defined in due course); see, for example, Lemma \ref{lemma:diffeet} and the proofs of Lemmas \ref{lemma:khVnMtTlkBr} and \ref{lemma:3lowcov}. 
Second, we can now apply the result of Cz\'edli \cite{czgcdofS}\footnote{\cite{czgcdofS} was written and submitted later than \cite{czg864}.}, formulated here in \eqref{eq:dsmntlExtsm}, to certain subsets; for instance, see the proofs of Lemmas \ref{lemma:circedgglu} and \ref{lemma:w2}.

\section{Stating the Main Theorem and its corollary}\label{sect:mainthm}
We need to borrow some notations and notions from the folklore and Cz\'edli \cite{czg864}. 
For $4\leq n\in\Nplu:=\set{1,2,3,\dots}$, let $\Circ n$ denote the class of $n$-element lattices whose (Hasse) diagrams are $n$-element circles, that is, cycles of length $n$ in graph-theoretic terms. For example, $\Circ 6$ consists of two lattices\footnote{\label{foot:nodistinction}Here and later, we do not distinguish between two isomorphic lattices.}, $N_6$ and $N_6'$,  depicted in Figure \ref{figalma}.


\begin{figure}[ht] 
\centerline{ \includegraphics[width=\figwidthcoeff\textwidth]{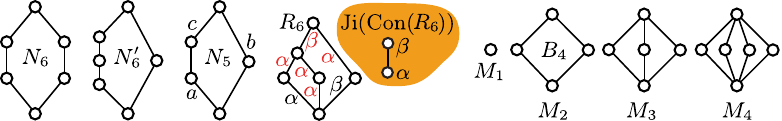}} 
\caption{Some lattices occurring in the paper}\label{figalma}
\end{figure}

For finite lattices $L_1$ and $L_2$, the \emph{glued sum} $L_1\gsum L_2$ is the special Hall-Dilworth gluing of $L_1$ and $L_2$ such that the intersection of $L_1$ and $L_2$ (which are sublattices of $L_1\gsum L_2$) is a one-element sublattice. In other words, we obtain the (Hasse) diagram of $L_1\gsum L_2$ by putting $L_2$ atop $L_1$, and identifying the top $1_{L_1}$ of $L_1$ with the bottom $0_{L_2}$ of $L_2$. Note that $(L_1\gsum L_2)\gsum L_3=L_1\gsum(L_2\gsum L_3)$, so we can drop the parentheses and we can write expressions like $L_1\gsum \dots \gsum L_h$.
Another special Hall-Dilworth gluing of $L_1$ and $L_2$ is their \emph{edge gluing}, where $|L_1\cap L_2|=2$. 
\nothing{Sometimes we will refer to this construction as the \emph{edge gluing operation}, but keep in mind that it is not single-valued, since it depends not only on the summands but also on the choice of the edges to be glued.}
For $n\in\Nplu:=\set{1,2,3,\dots}$, $\chain n$ will stand for the $n$-element \emph{chain}. A subset $Y$ of a lattice $L$ is \emph{convex} if for all $x,z\in Y$ and $y\in L$, $x\leq y\leq z$ implies $y\in Y$. 
With the notation 
\begin{equation*}
\Nar L:=\{x\in L: x\text{  is comparable with every element of }L\},
\end{equation*} 
a non-singleton set $X$ is a \emph{narrow chain component} of $L$ if it is a maximal subset of $\Nar L$ that is a convex subset of $L$.  
Let $L$ be a non-singleton lattice. Then there is a unique $h\in \Nplu$ and there are uniquely determined elements $u_0=0_L<u_1<\dots <u_h=1_L$ such that for each $i\in\set{1,\dots,h}$,  the \emph{interval} $[u_{i-1}, u_i]:=\{x\in L: u_{i-1} \leq x\leq u_i\}$ is either a narrow chain component of $L$ or $[u_{i-1},u_i]\cap\Nar L=\set{u_{i-1}, u_i}\neq [u_{i-1},u_i]$.  In this case, 
\begin{equation}
L=[u_0,u_1]\gsum[u_1,u_2]\gsum\dots\gsum[u_{h-1},u_h],  
\label{eq:cangsumdecomp}
\end{equation}
which is the \emph{canonical glued sum decomposition} of $L$.
This is exemplified by $L$, $K$ (with $h=3$), and $M$ (with $h=4$) in Figure \ref{figszilva}. If $L$ is a non-singleton chain, then $h=1$,  \eqref{eq:cangsumdecomp} turns into $L=[u_0,u_1]=[0_L,1_L]$, and $L$ is the only narrow chain component of itself. 
The one-element lattice has neither a canonical glued sum decomposition nor a narrow chain component.


\begin{figure}[ht] 
\centerline{ \includegraphics[width=\figwidthcoeff\textwidth]{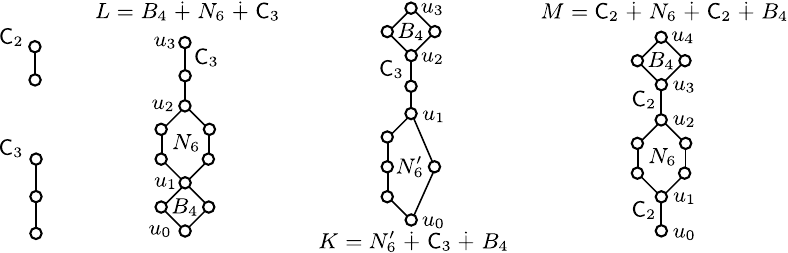}} 
\caption{Illustrations for the canonical glued sum decompositions}\label{figszilva}
\end{figure}

We say that a finite lattice $L$ is a \emph{core lattice} if $x\prec_L y$ holds for no $x,y\in\Nar L$ or, equivalently, none of the summands in the canonical glued sum decomposition \eqref{eq:cangsumdecomp} of  $L$ is a chain. Note that the one-element lattice is a core lattice.
For core lattices $L$ and $K$, we say that 
$L$ and $K$ are \emph{rearranged-glued-sum isomorphic} (in notation, $L\rgsiso K$) if, using the notation \eqref{eq:cangsumdecomp} for the canonical glued sum decomposition of $L$, there is a permutation $\pi$ of $\set{1,\dots,h}$ such that 
\begin{equation}
K = [u_{\pi(1)-1},u_{\pi(1)}]\gsum[u_{\pi(2)-1},u_{\pi(2)}]\gsum\dots \gsum[u_{\pi(h)-1},u_{\pi(h)}].
\label{eq:rgscisomor}
\end{equation}
Let us emphasize that $L\rgsiso K$ is defined only if $L$ and $K$ are core lattices.

If $L$ is a finite chain, then its \emph{core}, denoted by $\Core L$, is the one-element lattice. Assume that $L$ is a finite lattice that is not a chain. With respect to \eqref{eq:cangsumdecomp}, 
let $L_i:=[u_{i-1},u_i]$ for $i\in\set{1,\dots,h}$, and  
let $J:=\{i: 1\leq i\leq h$ and $L_i$ is not a chain$\}$. Then $J$ is nonempty and has the form $J=\set{j_1,\dots, j_t}$ such that $1\leq j_1<j_2<\dots <j_t\leq h$, and we define the \emph{core} of $L$ by
\begin{equation}
\Core L :=L_{j_1}\gsum L_{j_2}\gsum \cdots \gsum L_{j_t}.
\label{eq:CoreLdotsum}
\end{equation}
Note that, regardless of whether $L$ is a chain or not, 
$\Core{\Core L}=\Core L$, and $\Core L$ is a core lattice.
Cores are convenient tools for describing most of the classes of finite lattices we need. 
(Here, based on Footnote \ref{foot:nodistinction}, we could say \quot{sets} instead of \quot{classes}.)
For example, 
\begin{equation*}
\{L: |L|=11 \text{ and }\Core L\rgsiso B_4\gsum X\text{ for some }X\in \Circ 6 \}, 
\end{equation*}
where $B_4$ stands for the four-element Boolean lattice drawn in Figure \ref{figalma}, is a concise description of a 24-element set of lattices, and $L$, $K$, and $M$ shown in Figure \ref{figszilva} belong to this set.  Similarly, $\{L: \Core L \rgsiso B_4\gsum X$ for some $X\in \Circ 6\}$ describes an infinite set of finite lattices; since $|L|\geq |\Core L|$, each lattice in this class (or set) has at least 9 elements. 
Quite often,  $\Core L$ will be interesting for us only up to rearranged-glued-sum isomorphism. However, our rearranged-glued-sum isomorphism reduces to the usual isomorphism when $L_{j_1} \cong L_{j_2} \cong\dots\cong L_{j_t}$ in  \eqref{eq:CoreLdotsum} (in particular, if $t=1$).

As in \cite{czg864}, we will often express congruence densities in the form $x/64$, even when  simplification would be possible or the numerator is not an integer. The common denominator 64 facilitates comparison and improves readability. 
We recall the following lemma from  Cz\'edli \cite[Lemmas 3 and 10]{czg864}, but note that its 
\eeqref{lem1}, \eeqref{lem2}, and \eeqref{lem3}--\eeqref{lem5} parts  were proved in  Freese \cite{freese}, Cz\'edli \cite{czg-lconl2}, and  Mure\c san and Kulin \cite{muresankulin}, respectively. The letter e in the labels \eeqref{lem1}--\eeqref{lem10} indicates that we display results from \emph{earlier} papers. Similarly, the letters c and p in the labels \ceqref{th1}--\ceqref{th8} and \peqref{lemma:1nmpTn}--\peqref{lemma:8nmpTn} will refer to the \emph{current} paper and to \emph{previous} results, respectively.

\begin{lemma}[Cz\'edli {\cite[Lemmas 3 and 10]{czg864}}]\label{lemma:earlylist} 
Let $L$ be a finite lattice. Then $\cd L>8/64$ if and only if one of the following ten conditions holds.
\begin{enumerate}
\renewcommand{\labelenumi}{\textup{(e\theenumi)}}
\item\label{lem1} $|\Core L|=1$, that is, $L$ is a chain. (In this case, $\cd L=64/64$.)
\item\label{lem2} $\Core L \cong B_4$. (In this case, $\cd L=32/64$.)
\item\label{lem3} $\Core L \cong N_5$. (In this case, $\cd L=20/64$.)
\item\label{lem4} $\Core L \cong \chain 2\times \chain 3$. Equivalently, $\Core L$ is an edge gluing of two copies of $B_4$. (In this case, $\cd L=16/64$.)
\item\label{lem5} $\Core L \cong B_4\gsum B_4$. (In this case, $\cd L=16/64$.)
\item\label{lem6} $\Core L \in \Circ 6$. (In this case, $\cd L=14/64$.)
\item\label{lem7} $\Core L \in \Circ 7$. (In this case, $\cd L=11/64$.)
\item\label{lem8} $\Core L$ is an edge gluing of $B_4$ and $N_5$, in either order. (In this case, $\cd L=10/64$.)
\item\label{lem9} $\Core L\rgsiso B_4\gsum N_5$. (In this case, $\cd L=10/64$.)  
\item\label{lem10} For some $8\leq n\in\Nplu$, $\Core L\in\Circ n$. (In this case,  
we have that
$\cd L=(8+3/2^{n-7})/64$.)
\end{enumerate}
\end{lemma}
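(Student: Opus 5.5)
This lemma is recalled from earlier work, so the plan is to assemble it from the cited results rather than prove it afresh.

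The \quot{if} direction is a computation. Three facts do most of the work. First, $\cd L=\cd{\Core L}$. This holds because $\Con(L_1\gsum L_2)\cong \Con L_1\times\Con L_2$ for glued sums. So each chain summand $\chain{m}$ contributes a factor $2^{m-1}$ to $|\Con L|$ and adds exactly $m-1$ elements, and this cancels against $2^{|L|-1}$. Second, for a glued sum the densities multiply: $\cd{L_1\gsum L_2}=\cd{L_1}\cdot\cd{L_2}$, because $|L_1\gsum L_2|-1=(|L_1|-1)+(|L_2|-1)$. Third, we compute directly from small congruence lattices:
\[
|\Con B_4|=4,\quad |\Con N_5|=5,\quad |\Con(\chain2\times\chain3)|=8,\quad |\Con X|=2^{n-2}+3 \text{ for } X\in\Circ n.
\]
The last count comes from a circle being two chains glued at their ends; its congruences are generated by prime perspectivity classes. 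With these, each density listed in \eeqref{lem1}--\eeqref{lem10} follows immediately. For example, $\cd{B_4\gsum N_5}=\tfrac12\cdot\tfrac{20}{64}=\tfrac{10}{64}$, and for edge gluings one counts congruences via join-irreducible congruences. All of these values exceed $8/64$.

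The \quot{only if} direction is the substantive part, and I would follow the cited sources. Freese's bound gives \eeqref{lem1}. Cz\'edli \cite{czg-lconl2} and Mure\c san--Kulin \cite{muresankulin} characterise the lattices with $|\Con L|\geq 2^{n-5}$, which gives \eeqref{lem2}--\eeqref{lem5} down to density $16/64$. Cz\'edli \cite[Lemma 10]{czg864} then handles the range $(8/64,16/64)$. The general mechanism there is the following. $|\Con L|\le 2^{|\Jir{\Con L}|}$, and the number of join-irreducible congruences equals the number of prime-perspectivity classes of edges. This number is at most $|L|-1$ minus a deficiency that grows with the number of non-chain \quot{blocks} and with the width. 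So large density forces the core to be small, or a single circle, or a glued/edge combination of at most two small blocks. The candidates can then be enumerated and their densities compared with $8/64$.

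The main obstacle is bounding the width and excluding 3-element antichains. A lattice with a 3-element antichain contains a subposet forcing at least three \quot{independent} collapses of edges, which drops the density to at most $8/64$. I would derive this from the lemma that $\cd L\le\cd S$ times a loss factor for suitable sublattices $S$, and check it on $M_3$ and the small width-3 lattices.
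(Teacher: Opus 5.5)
Your overall route is the paper's. The lemma is not reproved here; it is recalled from Lemmas~3 and~10 of \cite{czg864}, with \eeqref{lem1}, \eeqref{lem2}, \eeqref{lem3}--\eeqref{lem5} going back to Freese, Cz\'edli, and Mure\c san--Kulin. Your derivation of $\cd L=\cd{\Core L}$ and of the multiplicativity of congruence density over glued sums from $\Con{L_1\gsum L_2}\cong\Con{L_1}\times\Con{L_2}$ is correct.

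However, the one nontrivial count you supply for the \quot{if} direction is false: for $X\in\Circ n$ one has $|\Con X|=2^{n-4}+3$, not $2^{n-2}+3$. To see this, note the following.
\begin{itemize}
\item Each of the $n-4$ edges of $X$ not incident with $0$ or $1$ generates a congruence whose only non-singleton block consists of the two endpoints of that edge.
\item The bottom edge of each boundary chain is perspective to the top edge of the other chain. It generates a two-block congruence containing all $n-4$ inner edges.
\end{itemize}
Thus $\Jir{\Con X}$ is an $(n-4)$-element antichain lying below two incomparable elements, so it has $2^{n-4}+3$ ideals. Hence $\cd X=(2^{n-4}+3)/2^{n-1}=(8+3/2^{n-7})/64$, which gives $14/64$, $11/64$, and \eeqref{lem10}, and also $|\Con{B_4}|=4$ and $|\Con{N_5}|=5$ for $n=4,5$. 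Your formula contradicts your own values for $B_4\in\Circ 4$ and $N_5\in\Circ 5$, since it yields $7$ and $11$. It would also make $\cd X>1/2$. So as written, your verification of \eeqref{lem6}, \eeqref{lem7}, and \eeqref{lem10} fails.

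There is also a slip in the citation part. $|\Con L|\ge 2^{n-5}$ means $\cd L\ge 4/64$, not $16/64$; a description at that level would already contain the main theorem of this paper. What you mean is $2^{n-3}$.

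Finally, the width argument you sketch is not a proof. No general \quot{$\cd L\le\cd S$ times a loss factor} principle is available. The monotonicity \eqref{eq:dsmntlExtsm} used in this paper comes from \cite{czgcdofS} and requires $L$ to be a dismantlable extension of $S$. The bound $\cd L\le 8/64$ for lattices with a three-element antichain is Lemma~6 of \cite{czg864}. Like the rest of the \quot{only if} direction, it should simply be cited, exactly as the paper does.
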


\begin{figure}[ht] 
\centerline{ \includegraphics[width=\figwidthcoeff\textwidth]{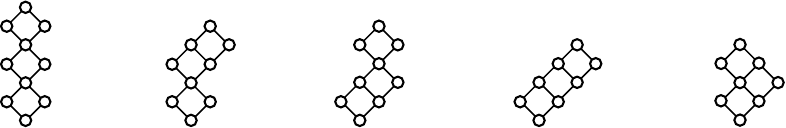}} 
\caption{The core lattices in Theorem \ref{thm:main}\ceqref{th3} }\label{figcitrom}
\end{figure}

We are now ready to present the main result of the paper. The lattices $B_4$, $M_3$, $N_5$, $N_6$, and $N_6'$ appearing in the theorem below are defined by Figure \ref{figalma}, while the symbol $\rgsiso$ was  introduced earlier, in the sentence including \eqref{eq:rgscisomor}. Note that each occurrence of $\cong$ in the theorem could be replaced by $\rgsiso$, but not conversely.

\begin{theorem}[Main Theorem]\label{thm:main}
For a finite lattice $L$, the congruence density of $L$ exceeds $3/32$ (i.e., $\cd L>6/64$) if and only if one of the following eight conditions is satisfied.
\begin{enumerate}
\renewcommand{\labelenumi}{\textup{(c\theenumi)}}
  \item\label{th1} 
$\cd L>8/64$; for a complete characterization of lattices with this property, see Lemmas 3 and 10 in \cite{czg864} or, equivalently, see Lemma \ref{lemma:earlylist} here. 
  \item\label{th2} 
     $\Core L \cong M_3$.  In this case,  $\cd L=8/64$.                      
  \item\label{th3} 
     $\Core L\cong B_4 +_1 B_4 +_2 B_4$, where each of the symbols $+_1$ and $+_2$ denotes either the glued sum operation $\gsum$ or an edge gluing. In other words, $\Core L$ is isomorphic to one of the five lattices given in Figure~\ref{figcitrom}.  In this case, $\cd L=8/64$.
  \item\label{th4} 
$\Core L\cong N_{5,5}$; see the left side of Figure \ref{figdio}.  In this case, $\cd L=7/64$.
  \item\label{th5} 
For some $X\in \Circ 6$, $\Core L \rgsiso B_4 \gsum X$. In other words, $\Core L \rgsiso B_4 \gsum N_6$ or $\Core L \rgsiso B_4 \gsum N'_6$. In this case, $\cd L=7/64$.
  \item\label{th6} 
$\Core L$ is obtained by an edge gluing of $B_4$ and an $X\in\Circ 6$ in one of the two possible orders. That is, $\Core L$ is isomorphic to one of the six lattices drawn in Figure \ref{figeper}. In this case, $\cd L=7/64$.
  \item\label{th7} 
$\Core L$ is an edge gluing of two copies of $N_5$. In this case,  $\cd L=6.5/64$.
  \item\label{th8} 
$\Core L \cong N_5\gsum N_5$. In this case, $\cd L=6.25/64$. 
\end{enumerate}
\end{theorem}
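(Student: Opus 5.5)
The proof splits into the easy ``if'' direction and the substantial ``only if'' direction.

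\emph{The ``if'' direction.} First I will show that $\cd L$ depends only on $\Core L$ up to $\rgsiso$. By the folklore fact $\Con(L_1\gsum L_2)\cong\Con L_1\times\Con L_2$, we get $|\Con(L_1\gsum L_2)|=|\Con L_1|\cdot|\Con L_2|$, while $|L_1\gsum L_2|-1=(|L_1|-1)+(|L_2|-1)$. Hence $\cd{L_1\gsum L_2}=\cd{L_1}\cdot\cd{L_2}$, and chain summands contribute the factor $1$. So it suffices to compute $\cd{}$ for each core listed in \ceqref{th2}--\ceqref{th8}. For glued sums this reduces to $\cd{}$ of the summands: for instance $\cd{N_5\gsum N_5}=(20/64)^2=6.25/64$, and $\cd{B_4\gsum X}=(32/64)(14/64)=7/64$ for $X\in\Circ 6$. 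For edge gluings and for $M_3$ and $N_{5,5}$, I will count congruences directly, using the standard description of $\Con$ of a finite lattice via the join-irreducible congruences $\con(a,b)$ for prime intervals $a\prec b$ and the perspectivity/forcing relation between prime intervals. Each case is a finite computation; for $M_3$, for example, $|\Con M_3|=2$ and $2/2^4=8/64$.

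\emph{The ``only if'' direction.} Assume $\cd L>6/64$ and that $L$ is not covered by Lemma \ref{lemma:earlylist}, so $6/64<\cd L\le 8/64$. By multiplicativity and the product form of $\Core L$, I will argue summand by summand. Every non-chain summand $S$ has $\cd S\le 32/64$, with equality only for $B_4$; the next largest values are $20/64$ ($N_5$) and $14/64$ ($\Circ 6$), and all other summands have $\cd S\le 11/64$, the latter needing its own bound. The possible products exceeding $6/64$ are then few:
\begin{itemize}
\item three summands of type $B_4$ give $4/64\cdot 2=8/64$, i.e., the glued-sum members of \ceqref{th3};
\item $B_4\gsum X$ with $X\in\Circ 6$ gives \ceqref{th5};
\item $N_5\gsum N_5$ gives \ceqref{th8}.
\end{itemize}
Other combinations, such as $B_4\gsum B_4\gsum N_5$ with $\cd{}=5/64$, fall below $6/64$. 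This leaves the case $t=1$, where $L$ is itself a glued-sum-indecomposable core lattice with $6/64<\cd L\le 8/64$. For this case I must show $L$ is one of: $M_3$, an edge gluing of three $B_4$'s, $N_{5,5}$, an edge gluing of $B_4$ with some $X\in\Circ 6$, or an edge gluing of two copies of $N_5$.

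\emph{The indecomposable case.} This is the main obstacle, and I will attack it with the toolkit of \cite{czg864}. The basic tool is that every prime interval that is not a narrow bridge lies in a perspectivity class of size at least $2$, so collapsing forces further collapses. This gives the estimate $|\Con L|\le 2^{m}$, where $m$ is the number of classes of prime intervals under the ``generate the same congruence'' relation, sharpened by counting forcing dependencies. I will organize the argument by width and by the structure of $\Foot a b$, i.e., the feet of ``distant'' elements:
\begin{itemize}
\item If $L$ has a $3$-element antichain, I will show that either $L\cong M_3$ (the case of a sublattice $M_3$ where all edges collapse together) or $L$ contains a $B_4$-edge-gluing configuration. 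Repeated use of the result \eqref{eq:dsmntlExtsm} on subsets will then push $\cd{}$ down to at most $6/64$ unless $L$ is one of the five lattices of Figure~\ref{figcitrom} or $N_{5,5}$.
\item In width $2$, $L$ is planar by \eqref{eq:233planar}, and I will use its left and right boundary chains. Each non-chain interval between consecutive elements lying on both boundaries must be a $4$-cell, an $N_5$-type pentagon, or a larger circle. Counting how gluings of such pieces reduce $\cd{}$, I will show that only the edge gluings listed in \ceqref{th6} and \ceqref{th7} survive. An extra cell or a circle of length at least $7$ drops $\cd{}$ to at most $6/64$.
\end{itemize}
The delicate part is the sharp borderline cases where $\cd L$ equals exactly $6/64$, for example $R_6$ and $B_4$ edge-glued with $\Circ 7$. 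These require exact counting rather than crude bounds, and I expect most of the casework to go there.

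Finally, Theorem \ref{thm:jm} follows by inspecting the finite list, since each listed core satisfies \jmref{jm2} and chain summands preserve it.
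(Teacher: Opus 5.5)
\textbf{Verdict: there is a genuine gap.} Your proposal leaves the glued-sum-indecomposable case ($t=1$) unproved, and that case carries essentially all of the theorem's content. There you state intentions rather than arguments, and the specific mechanisms you sketch would not work.

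\emph{The antichain branch.} The lattices of Figure~\ref{figcitrom} cannot occur here, since all five have width $2$. The correct outcome is $\Core{L}\in\{M_3,N_{5,5}\}$. Nothing you write rules out four-element antichains or identifies the sublattice generated by a three-element antichain. Your ``basic tool'' is false as stated: the monolith edge $(a,c)$ of $N_5$ is perspective to no other prime interval. A bound $|\Con{L}|\le 2^m$ alone yields only powers of two, so it cannot cut at $6/64$; the ``sharpening by forcing dependencies'' is exactly what must be made precise. The paper does this with three tools:
\begin{itemize}
\item $|\Jr{L}|\ge4$ or $|\Mr{L}|\ge4$ implies $\cd{L}\le 4/64$ (Lemma~\ref{lemma:nofourjred});
\item two distinct $\nu(L)$-comparable members of a congruence-determining edge set of size at most $|L|-4$ imply $\cd{L}\le 6/64$ (Lemma~\ref{lemma:lfkbbZmbt}), with distinctness certified via $\Foot{a}{b}$ (Lemmas~\ref{lemma:diffeet} and~\ref{lemma:khVnMtTlkBr});
\item passage to a small sublattice containing $\Jr{L}\cup\Mr{L}$ (Lemma~\ref{lemma:pdgbpLsT}).
\end{itemize}
Together with the classification of meet-semilattices generated by a three-element antichain, these give Lemmas~\ref{lemma:3lowcov}, \ref{lemma-3a-cha}, \ref{lemma:sJqsM}, \ref{lemma:nofour}, and~\ref{lemma-3a-cHnb}.

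\emph{The width-$2$ branch.} When $t=1$, the only elements on both boundary chains are $0$ and $1$. So your ``intervals between consecutive elements on both boundaries'' are just $L$ itself, which is generally not a circle. The decomposition that works is along chord edges: $L$ is an iterated edge gluing of circles $L_0,\dots,L_h$. The key fact you are missing is Lemma~\ref{lemma:circedgglu}: edge gluing with a member of $\Circ{k}$ at most halves the congruence density, with equality iff $k=4$. This bounds $h\le 2$ and the circle sizes. After that you still need the following:
\begin{itemize}
\item explicit counts for circle sizes $(5,5)$, giving $6.5/64$, and $(6,5)$, giving $19$ congruences, i.e.\ $4.75/64$;
\item a dismantlable-extension argument for $(6,6)$;
\item the case $h=2$ with three $4$-cells, which produces the two edge-gluing members of \ceqref{th3} that your width-$2$ list omits.
\end{itemize}

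\emph{The decomposable case.} Reducing it to Lemma~\ref{lemma:earlylist} is sound in principle, since for $t\ge2$ every summand has density $>12/64$. But your ranking of summands is wrong. $\chain 2\times\chain 3$, the edge gluing of two copies of $B_4$, is glued-sum indecomposable with $\cd{\chain 2\times\chain 3}=16/64$. This contradicts your claim that all remaining summands have density at most $11/64$. Consequently $B_4\gsum(\chain 2\times\chain 3)$ and $(\chain 2\times\chain 3)\gsum B_4$, both of density $32/64\cdot 16/64=8/64$, are members of \ceqref{th3} that your enumeration (``three summands of type $B_4$'') misses.

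\emph{Where the difficulty lies.} Your diagnosis is off. An edge gluing of $B_4$ with a member of $\Circ{7}$ has density $11/128=5.5/64$ by Lemma~\ref{lemma:circedgglu}, not $6/64$. No exact counting at $6/64$ is ever needed: the hypothesis is strict, and Lemma~\ref{lemma:lfkbbZmbt} yields $\le 6/64$ directly.
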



\begin{figure}[ht] 
\centerline{ \includegraphics[width=\figwidthcoeff\textwidth]{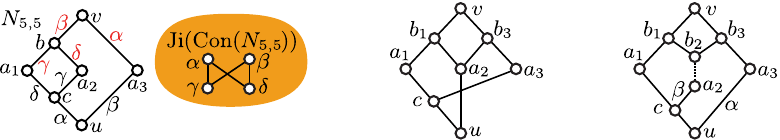}} 
\caption{$N_{5,5}$ and illustrations for the proof of Lemma \ref{lemma-3a-cha}}\label{figdio}
\end{figure}


\begin{figure}[ht] 
\centerline{ \includegraphics[width=\figwidthcoeff\textwidth]{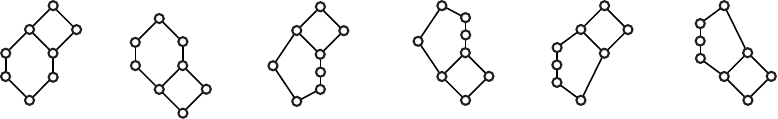}} 
\caption{The core lattices in Theorem \ref{thm:main}\ceqref{th6} }\label{figeper}
\end{figure}

For $n,k\in\Nplu$, let $\lcd (n, k)$ denote the $k$-th \ul largest value among the \ul congruence \ul densities of $n$-element lattices. Note that $\lcd(n,k)$ is not always defined. For example, $\lnc 3 2$ and $\lnc 4 3$ are undefined, since $|\{\cd L: |L|=3\}|=1$ and $|\{\cd L: |L|=4\}|=2$. 
With straightforward reformulations of parts \peqref{lemma:1nmpTn}--\peqref{lemma:5nmpTn},   
we recall the following statement from four different papers.

\begin{lemma}\label{lemma:nmpTn} \
\begin{enumerate}
\renewcommand{\labelenumi}{\textup{(p\theenumi)}}
\item\label{lemma:1nmpTn} 
  $\lcd(n,1)=1=64/64$ for all $n\in\Nplu$  (proved by Freese \cite{freese}).
\item $\lcd(n,2)=32/64$ for $4\leq n\in\Nplu$ (proved by Cz\'edli \cite{czg-lconl2}).
\item $\lcd(n,3)=20/64$ for $5\leq n\in\Nplu$ (proved by Mure\c{s}an and Kulin \cite{muresankulin}).
\item $\lcd(n,4)=16/64$ for $6\leq n\in\Nplu$ (proved by Mure\c{s}an and Kulin \cite{muresankulin}). 
\item\label{lemma:5nmpTn} 
 $\lcd(n,5)=14/64$ for $6\leq n\in\Nplu$  (proved by Mure\c{s}an and Kulin \cite{muresankulin}). 
\item $\lcd(n,6)=11/64$ for $7\leq n\in\Nplu$ (proved by Cz\'edli \cite{czg864}).
\item  $\lcd(n,7)=10/64$ for $7\leq n\in\Nplu$  (proved by Cz\'edli \cite{czg864}).
\item\label{lemma:8nmpTn} 
$\lcd(n,k)=(8+3\cdot 2^{7-k})/64$ for $8\leq k\leq n$ (proved by Cz\'edli \cite{czg864}).
\end{enumerate}
\end{lemma}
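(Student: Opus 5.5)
The statement to prove is Lemma \ref{lemma:nmpTn}. It is a recollection of results from four earlier papers, each part being a reformulation of a known statement, so the plan is to derive each part from its source. The translation tool throughout is the identity $\cd L=|\Con L|/2^{n-1}$ for $|L|=n$. This shows that ranking $n$-element lattices by $|\Con L|$ is the same as ranking them by $\cd L$. Hence the $k$-th largest number of congruences of $n$-element lattices, as determined in \cite{freese,czg-lconl2,muresankulin,czg864}, equals $2^{n-1}\lcd(n,k)$.

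Part \peqref{lemma:1nmpTn} follows from Freese's bound $|\Con L|\le 2^{n-1}$ together with the chain $\chain n$, which attains it. Parts (p2)--\peqref{lemma:5nmpTn} restate the known values $2^{n-2}$, $5\cdot 2^{n-5}$, $2^{n-4}$ and $7\cdot 2^{n-6}$; dividing by $2^{n-1}$ gives $32/64$, $20/64$, $16/64$ and $14/64$. Parts (p6)--\peqref{lemma:8nmpTn} come from \cite{czg864} in the same way.

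As a consistency check, I would also read the values off Lemma \ref{lemma:earlylist}, which lists every lattice with $\cd L>8/64$ together with its density. For fixed $n$, one checks which cores in (e1)--(e10) fit into $n$ elements:
\begin{itemize}
\item $B_4$ needs $n\ge 4$;
\item $N_5$ needs $n\ge5$;
\item $\chain2\times\chain3$, $B_4\gsum B_4$ and $\Circ 6$ need $n\ge6$ or $7$;
\item $\Circ m$ needs $n\ge m$.
\end{itemize}
Any core can be padded with chain summands to reach exactly $n$ elements, and padding does not change $\cd L$. So the distinct values attained are $64,32,20,16,14,11,10$ and $8+3/2^{m-7}$ for $8\le m\le n$, all over $64$.

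Sorting these values gives the $k$-th largest density. Writing $m=k$ in the last family gives $(8+3\cdot2^{7-k})/64$, which requires $k\le n$.

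The only delicate point is the indexing thresholds. For example, $\lcd(n,4)$ and $\lcd(n,5)$ should both be valid for $n\ge 6$, which needs $\chain2\times\chain3$ and a member of $\Circ6$, both of size $6$. Also, $16/64$ is attained by two cores but counts as a single value. These checks confirm the stated ranges, and I expect this bookkeeping of lower bounds on $n$ to be the main (though minor) obstacle.
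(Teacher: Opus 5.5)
Your approach matches the paper's: the lemma is only a restatement of the cited results via $\cd L=|\Con L|/2^{n-1}$, and your cross-check against Lemma \ref{lemma:earlylist}, using \eqref{eq:cdLcdCorL} and chain padding, is the same bookkeeping the paper uses to obtain Corollary \ref{corol:1}. Two small fixes are needed: the fourth largest number of congruences is $2^{n-3}$, not $2^{n-4}$ (the latter would give $8/64$), and for $n=7$ the value $10/64$ comes from the $7$-element edge gluing of $B_4$ and $N_5$ in \eeqref{lem8}, since $B_4\gsum N_5$ in \eeqref{lem9} has $8$ elements.
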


In particular, if $8\leq n\in\Nplu$, then 
$\lcd(n,k)$ is known and is given by a simple formula for each $k\in\set{1,2,\dots,n}$. 
By tedious work, Section 4 of Cz\'edli \cite{czg864} presents $\lcd(7,k)$ for all $k$ such that $\lcd(7,k)$ is defined.
In the following statement, we establish three new values of $\lcd(n,k)$ for $n = 8$ and four\footnote{\label{foot:imposs}Obtaining a few further values for small $n$ 
with computer assistance would be possible, but one cannot go too far in this direction.  
Determining a few further values by theoretical considerations over dozens of pages might also be possible in the future.  However, computing,  for example,  $\lcd(10^{10}, 10^{12})$ in the  foreseeable future seems unlikely.} 
new values for $n>8$.

\begin{corollary}\label{corol:1}\ 

\textup{(A)} For $9\leq n\in\Nplu$, we have that $\lcd(n,n+1)=8/64$, $\lcd(n,n+2)=7/64$, $\lcd(n,n+3)=6.5/64$, and $\lcd(n,n+4)=6.25/64$. Furthermore,  $\lcd(8,9)=8/64$, $\lcd(8,10)=7/64$, and  $\lcd(8,11)=6.5/64$. 

\textup{(B)} Multiplying the $\lcd(n,k)$ values given in part \textup{(A)} by $2^{n-1}$, we obtain the 
\emph{$k$th} largest number of congruences of $n$-element lattices.
\end{corollary}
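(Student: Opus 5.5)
We derive Corollary \ref{corol:1} from Theorem \ref{thm:main} together with Lemmas \ref{lemma:earlylist} and \ref{lemma:nmpTn}. By Lemma \ref{lemma:nmpTn}, for $n\geq 8$ the values $\lcd(n,1),\dots,\lcd(n,n)$ are
\[64,32,20,16,14,11,10,\ 8+3/2,\ 8+3/4,\ \dots,\ 8+3/2^{n-7}\]
(all over $64$). These are exactly the densities in Lemma \ref{lemma:earlylist}, with \eeqref{lem10} contributing $\Circ 8,\dots,\Circ n$. So every density of an $n$-element lattice above $8/64$ is already among the first $n$ values. Consequently, the $(n+1)$th, $(n+2)$th, \dots\ largest values are the largest distinct densities in the interval $(6/64,8/64]$, followed by anything at or below $6/64$.

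Theorem \ref{thm:main} lists every lattice with $\cd L>6/64$. The possible densities not exceeding $8/64$ are therefore $8$, $7$, $6.5$, and $6.25$ (over $64$), coming from \ceqref{th2}--\ceqref{th8}. To show each is attained among $n$-element lattices, we realize each case by a lattice whose core is small enough and pad it with a chain. Taking $L=\Core L\gsum \chain{m}$ with suitable $m$ gives every size $n\geq|\Core L|$, and the density depends only on the core as stated. We need the minimal core sizes: $|M_3|=5$, $|N_{5,5}|$ (which I expect to be $7$ or $8$), $|B_4\gsum N_6|=9$, the edge gluing of $B_4$ and $N_6$ with $8$ elements, the edge gluing of two copies of $N_5$ with $8$ elements, and $|N_5\gsum N_5|=9$.

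Hence for $n\geq 9$ all four values $8,7,6.5,6.25$ occur. Since none lies above $8/64$ other than those already counted, $\lcd(n,n+1)=8/64$, $\lcd(n,n+2)=7/64$, $\lcd(n,n+3)=6.5/64$, and $\lcd(n,n+4)=6.25/64$. For $n=8$, the values $8$ (via $M_3$), $7$ (via an $8$-element edge gluing of $B_4$ and $X\in\Circ 6$, or via $N_{5,5}$), and $6.5$ (via two edge-glued copies of $N_5$, which has $8$ elements) are attained. The value $6.25$ is not, since $N_5\gsum N_5$ has $9$ elements and $9>8$. Nothing else above $6/64$ exists, which gives the three stated values for $n=8$. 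Part (B) is immediate from the definition $\cd L=|\Con L|/2^{n-1}$, because multiplication by the fixed positive number $2^{n-1}$ preserves the order of the values.

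The main obstacle is the careful size bookkeeping for $n=8$ and $n=9$. In particular, we must confirm that an $8$-element lattice of type \ceqref{th7} exists: two $N_5$'s share $2$ elements, giving $5+5-2=8$. We must also confirm that the $8/64$ and $7/64$ values are not accidentally duplicated by $\Circ n$ densities; this holds because $8+3/2^{n-7}>8$ always. Finally, the value $8$ must not already appear among the first $n$ values, which holds because Lemma \ref{lemma:earlylist} requires a strict inequality $\cd L>8/64$.
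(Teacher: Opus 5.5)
Your argument is correct and follows the paper's route. The paper only says that the corollary follows immediately from Theorem~\ref{thm:main} together with Lemmas~\ref{lemma:earlylist} and~\ref{lemma:nmpTn}; you have supplied exactly that bookkeeping, namely that the first $n$ values exhaust the densities above $8/64$, and that the four values in $(6/64,8/64]$ are realized by padding the minimal cores with chains. Your hedge on $|N_{5,5}|$ is harmless: $N_{5,5}$ has $7$ elements (the sublattice $\{a_1,a_2,a_3,b,c,u,v\}$ in the proof of Lemma~\ref{lemma-3a-cha}), and the value $7/64$ at $n=8$ is in any case already witnessed by the $8$-element lattices of \ceqref{th6}.
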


The lattices witnessing the $\lcd$-values given above are described in Theorem \ref{thm:main}. 
Corollary \ref{corol:1} follows immediately from
Theorem \ref{thm:main} and Lemmas \ref{lemma:earlylist} and \ref{lemma:nmpTn}; thus no separate proof will be given.

\section{Tools}
The congruence density of a lattice depends only on its core. Indeed, recall from Cz\'edli \cite[Lemma 3]{czg864} that for every finite lattice $L$, 
\begin{equation}
\cd L=\cd{\Core L}.
\label{eq:cdLcdCorL}
\end{equation}
A \emph{quasiorder} on a set is a reflexive and transitive relation.
If $\rho$ is such, then $a\rleq b$ and $a\req b$ will stand for $(a,b)\in\rho$ and $(a,b)\in\rho\cap\rho^{-1}$, respectively.
For a \emph{quasiordered set} $(A;\rho)$, a subset $B$ of $A$ is an \emph{ideal} if for all $x,y\in A$,  $y\in B$ and $x\rleq y$ imply $x\in B$. The ideals of $(A,\rho)$ form a distributive lattice $\Idl{A;\rho}=(\Idl{A;\rho};\cup,\cap)$. When the quasiorder $\rho$ above is antisymmetric, $(A,\rho)$ is a \emph{poset}.

For a finite lattice $L$, let $\Jir L$ stand for the \emph{set of join-irreducible elements} of $L$; an element of $L$ is \emph{join-irreducible} if it has exactly one lower cover. The (dually defined) \emph{set of} \emph{meet-irreducible elements} is denoted by $\Mir L$.
The unique lower cover of an element $x\in\Jir L$ will be denoted by $\locov x$.  We will often use the following notation, in which $\thejipair$ comes from \quot{pair}:
\begin{equation}
\text{For }x\in\Jir L\text{, let }\jipair x\text{ or, if confusion threatens, }\ajipair L x\text{ stand for }(\locov x, x). 
\label{eq:pkmncShZLt}
\end{equation}
Note that the notation $\jipair x$ is meaningless if $x$ is not a  join-irreducible element.

For $x,y\in L$, $x\parallel y$ denotes that $x$ and $y$ are \emph{incomparable}, that is, $x\nleq y$ and $y\nleq x$. 
An element $x\in L$ is \emph{join-reducible} if it has at least two lower covers; the set of such elements is denoted by $\Jr L$, while $\Mr L$ stands for the set of (dually defined) \emph{meet-reducible} elements. Since $0\notin\Jir L\cup\Jr L$ and $1\notin \Mir L\cup \Mr L$, 
\begin{equation}
|L|=|\Jir L|+|\Jr L|+1 = |\Mir L|+|\Mr L|+1.
\label{eq:zsszmrdLjVntWz}
\end{equation}

By the classical structure theorem of finite distributive lattices, 
\begin{equation}
D\cong \Idl{\Jir D;\leq}
\label{eq:snCrtcHnnKks}
\end{equation}
for every such lattice $D$, where \quot{$\leq$} stands for the restriction of the lattice order of $D$ to $\Jir D$.  
For elements $a$ and $b$ of a finite lattice $L$, the least congruence collapsing $a$ and $b$ will be denoted by $\con(a,b)$ or, if ambiguity threatens, by $\con_L(a,b)$. 
In particular, for $b\in\Jir L$, $\con(\jipair b)=\con(\locov b,b)$. 

Let $\Edge L$ stand for the \emph{edge set} (set of edges) of $L$; that is, $\Edge L:=\{(a,b): a,b\in L$ and $a\prec b\}$. In other words, $\Edge L:=\{(a,b): [a,b]$ is a \emph{prime interval} of $L\}$.
We recall the following fact from Gr\"atzer \cite{GGfound}; see also the very first sentence of Gr\"atzer \cite{GGprimpers}: For every finite lattice $L$, 
\begin{equation}
\Jir{\Con L}=\set{\con(a,b): (a,b)\in\Edge L}. 
\label{eq:hmVkGrb}
\end{equation}

We define a quasiorder $\nu(L)$ on $\Edge L$ by letting  
\begin{equation}
(x,y) \leq_{\nu(L)}  (u,v) \defiff \con(x,y)\leq\con(u,v)
\label{eq:pPrdnjDgk}
\end{equation}
for $(x,y), (u,v)\in\Edge L$.
Combining \eqref{eq:snCrtcHnnKks}, \eqref{eq:hmVkGrb},  \eqref{eq:pPrdnjDgk}, and the fact that the congruence lattice of a lattice is always distributive, we obtain\footnote{Alternatively,  \eqref{eq:knRlhKrhkn} consists of (2.1) and a particular case of (2.8), both from Cz\'edli \cite{czg864}.}
that
\begin{equation}
\Con L\cong \Idl{\Jir{\Con L};\leq } \text{ \ and \ }
\Con L \cong \Idl{\Edge L; \nu(L)}
\label{eq:knRlhKrhkn}
\end{equation}
for every finite lattice $L$. Since \eqref{eq:knRlhKrhkn} in itself is not sufficient for our purposes, we need an additional concept. 
A subset $B$ of $\Edge L$ is a \emph{congruence-determining edge set} if for each $(x,y)\in\Edge L$ there is an edge $(u,v)\in B$ such that 
$(u,v)\equiv_{\nu(L)} (x,y)$, that is, $\con(u,v)=\con(x,y)$. Combining the facts gathered above, it is not hard to show (and it has been shown above (2.8) in Cz\'edli \cite{czg864}) that for every congruence-determining edge set $B$ of a finite lattice $L$, 
\begin{equation}
\Con L\cong \Idl{B;\nu(L)}.
\label{eq:brnkmR}
\end{equation}

For $a\leq b$ and $c\leq d$ in $L$, $[a,b]$ and $[c,d]$ are \emph{perspective intervals} if $[a,b]\pup [c,d]$, which means that
$b\vee c=d$ and $b\wedge c=a$, or $[a,b]\pdn [c,d]$, which means that $[c,d]\pup[a,b]$. We denote the \emph{perspectivity relation} on the set of intervals by $\pers$, so $[a,b]\pers [c,d]$ means that $[a,b]\pup [c,d]$ or $[a,b]\pdn [c,d]$. As it is well known, 
\begin{equation}
\text{if }[a,b]\pers [c,d]\text{, then }\con(a, b)=\con(c,d).
\label{eq:mHprsrTvkF}
\end{equation}

\begin{lemma}\label{lemma:oppedges}
In a finite lattice $L$, let $a,b,x\in L$ such that $a\parallel b$,  $b\prec a\vee b$, and 
$a\wedge b\prec x\leq a$. Then  $[a\wedge b, x]\pup[b,a\vee b]$ and $\con(a\wedge b,x)=\con(b,a\vee b)$.
\end{lemma}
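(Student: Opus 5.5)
We need to show $[a\wedge b,x]\pup[b,a\vee b]$, i.e. $x\vee b=a\vee b$ and $x\wedge b=a\wedge b$; the congruence equality then follows at once from \eqref{eq:mHprsrTvkF}.

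The meet condition is immediate. Since $x\leq a$, we have $x\wedge b\leq a\wedge b$. Since $a\wedge b\prec x$, in particular $a\wedge b\leq x$, so $a\wedge b\leq x\wedge b$. Hence $x\wedge b=a\wedge b$.

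The join condition uses the covering $b\prec a\vee b$. From $x\leq a$ we get $b\leq x\vee b\leq a\vee b$, so the covering forces $x\vee b$ to equal either $b$ or $a\vee b$. Suppose $x\vee b=b$. Then $x\leq b$, and together with $x\leq a$ this gives $x\leq a\wedge b$. This contradicts $a\wedge b\prec x$, which says $a\wedge b<x$. Therefore $x\vee b=a\vee b$.

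With $b\vee x=a\vee b$ and $b\wedge x=a\wedge b$, the definition of $\pup$ (taking $[a,b]:=[a\wedge b,x]$ and $c:=b$, $d:=a\vee b$) gives $[a\wedge b,x]\pup[b,a\vee b]$. Then \eqref{eq:mHprsrTvkF} yields $\con(a\wedge b,x)=\con(b,a\vee b)$.

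The hypothesis $a\parallel b$ is not needed in this argument. It only ensures that the situation is non-degenerate: $b<a\vee b$, consistent with the covering, and $a\wedge b<a$, so that some $x$ with $a\wedge b\prec x\leq a$ exists. The covering $b\prec a\vee b$ is the only genuinely used hypothesis beyond order bookkeeping.
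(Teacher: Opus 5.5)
Your proof is correct and follows the same route as the paper: you obtain the join equality $x\vee b=a\vee b$ from the covering $b\prec a\vee b$ after ruling out $x\le b$ (which would give $x\le a\wedge b$), and the congruence equality then follows from perspectivity. The only small difference is that you get $x\wedge b=a\wedge b$ directly from $a\wedge b\le x\le a$, whereas the paper derives it from $a\wedge b\le x\wedge b<x$ together with the covering $a\wedge b\prec x$; your version is marginally simpler, and you are right that $a\parallel b$ is never actually used.
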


\begin{proof} By virtue of \eqref{eq:mHprsrTvkF}, it suffices to show that $[a\wedge b, x]\pup[b,a\vee b]$.
If we had $x\leq b$, then $x\leq a\wedge b$ would be a contradiction. Thus $x\nleq b$. 
Since $a\wedge b=x\wedge a\wedge b \leq x\wedge b< x$ and $a\wedge b\prec x$, we obtain $x\wedge b=a\wedge b$.
From $x\nleq b$, we also obtain that $b< x\vee b$, whence $b<x\vee b\leq a\vee b$ and $b\prec a\vee b$ yield that $x\vee b=a\vee b$. Thus, $[a\wedge b, x]\pup[b,a\vee b]$, completing the proof.
\end{proof}

To formulate the result of Gr\"atzer \cite{GGprimpers} in a slightly weakened (but simpler) form, we recall the following lemma.

\begin{lemma}[Gr\"atzer \cite{GGprimpers}]\label{lemma:ggrprsprd}\ 

\textup{(A)} For a finite lattice $L$ and edges $(u,v),(p,q)\in \Edge L$, we have $(p,q)\leq_{\nu(L)}(u,v)$ or, equivalently, $\con(p,q)\leq \con (u,v)$ if and only if there is a number $k\in \Nnul$, a sequence $(x_0,y_0)$,  $(x_1,y_1)$, \dots, $(x_k,y_k)$ of edges, and a sequence   $[\dot x_1,\dot y_1]$, \dots, $[\dot x_k,\dot y_k]$ of intervals such that $(x_0,y_0)=(u,v)$, $(x_k,y_k)=(p,q)$, and for each $i\in\set{1,\dots,k}$, we have
$[x_{i-1},y_{i-1}]\pers [\dot x_i,\dot y_i]\supseteq [x_i,y_i]$.

\textup{(B)} The \quot{if} direction of  part \textup{(A)} remains valid under the weaker assumption that $x_i<y_i$ for $i\in\set{0,\dots, k}$, instead of requiring the pairs $(x_i,y_i)$ to be edges.
\end{lemma}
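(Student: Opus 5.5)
Part (B) is the easy direction, so I will prove it first; it also gives the \quot{if} direction of part (A). I will show by induction on $i$ that $\con(x_i,y_i)\leq\con(u,v)$. For $i=0$ this is trivial. For the step, \eqref{eq:mHprsrTvkF} gives $\con(\dot x_i,\dot y_i)=\con(x_{i-1},y_{i-1})$. Since $\dot x_i\leq x_i<y_i\leq\dot y_i$, and since congruence blocks of a lattice are convex, the pair $(x_i,y_i)$ is collapsed by $\con(\dot x_i,\dot y_i)$. Hence $\con(x_i,y_i)\leq\con(\dot x_i,\dot y_i)=\con(x_{i-1},y_{i-1})\leq\con(u,v)$. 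Taking $i=k$ yields $\con(p,q)\leq\con(u,v)$. Nothing in this argument needs the pairs to be edges, so (B) follows.

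For the \quot{only if} direction of (A), fix $(u,v)\in\Edge L$. Let $S\subseteq\Edge L$ be the set of edges reachable from $(u,v)$ by sequences of the described form. The plan has three steps.

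First, I will establish a closure property of $S$. Let $(a,b)\in S$ and $c\in L$. Since $a\prec b$, the element $b\wedge(a\vee c)$ is either $b$ or $a$. In the first case $a\vee c=b\vee c$. In the second case $[a,b]\pup[a\vee c,b\vee c]$, so every edge of $[a\vee c,b\vee c]$ lies in $S$ (one more step of the sequence, with $[\dot x,\dot y]=[a\vee c,b\vee c]$). The dual statement holds for $[a\wedge c,b\wedge c]$.

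Second, let $\theta$ be the equivalence relation on $L$ generated by $S$, viewed as a set of pairs. By the closure property, applying $x\mapsto x\vee c$ to an $S$-step $a\prec b$ gives either equal elements or the endpoints of a maximal chain of $[a\vee c,b\vee c]$ consisting of $S$-edges. So $x\,\theta\,y$ implies $(x\vee c)\,\theta\,(y\vee c)$, and dually for meets. Hence $\theta$ is a congruence. As $(u,v)\in S$, we get $\con(u,v)\leq\theta$, and therefore $\con(p,q)\leq\con(u,v)$ implies $p\,\theta\,q$.

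Third, and this is where I expect the main obstacle, I must show that an edge $(p,q)$ with $p\,\theta\,q$ actually belongs to $S$. Take a $\theta$-path $p=z_0,\dots,z_m=q$ whose consecutive elements form $S$-edges, and project it into $[p,q]$ via $z\mapsto(z\vee p)\wedge q$. Since $p\prec q$, each step $(a,b)$ is mapped either to a trivial pair or onto $\set{p,q}$, and some step must be nontrivial. For that step, $(a\vee p)\wedge q=p$ and $(b\vee p)\wedge q=q$. Two perspectivities are then available. The first is $[a,b]\pup[a\vee p,b\vee p]$. The second is $[a\vee p,\,a\vee p\vee q]\pdn[p,q]$, because $q\wedge(a\vee p)=p$, and $[a\vee p,\,a\vee p\vee q]\subseteq[a\vee p,b\vee p]$.

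The delicate point is that $[a\vee p,\,a\vee p\vee q]$ need not be an edge, while part (A) requires the intermediate pairs to be edges. My first attempt will be to choose an edge $(a\vee p,w)$ with $w\leq a\vee p\vee q$; this edge is in $S$. Then I will apply Lemma \ref{lemma:oppedges}, or an induction on the length of $[a\vee p,\,a\vee p\vee q]$, to pass from $(a\vee p,w)$ to $(p,q)$ through edges in $S$. If this step resists, I will follow Gr\"atzer's original argument in \cite{GGprimpers}. That argument proves the edge version via prime-perspectivity chains, and I would reproduce it in this setting.
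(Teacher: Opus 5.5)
\paragraph{Verdict.} Your proof is correct, and it takes a genuinely different route from the paper. The paper gives no proof of this lemma at all: it quotes it from Gr\"atzer \cite{GGprimpers} and uses it as a black box. Your proposal is therefore a self-contained alternative. It needs only finiteness and the convexity of congruence blocks, and it avoids the classical description of principal congruences by perspectivities of intervals of arbitrary size.

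\paragraph{What is already sound.} Part (B) is argued exactly as it should be. Steps 1 and 2 are correct: the equivalence $\theta$ generated by $S$ is compatible with $x\mapsto x\vee c$ and $x\mapsto x\wedge c$, because a nontrivial perspective image of an $S$-edge is spanned by a maximal chain of $S$-edges. In Step 3, the perspectivity $[a,b]\pup[a\vee p,b\vee p]$ needs $b\wedge(a\vee p)=a$. This holds, since otherwise $a\vee p=b\vee p$ and your projection would not separate $a$ from $b$. Consequently every edge of $[a\vee p,b\vee p]$ lies in $S$. In particular, with $c:=a\vee p$ and $d:=c\vee q$, every edge of $[c,d]$ is in $S$, and $c\wedge q=p$.

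\paragraph{The step you flagged.} It goes through, but Lemma~\ref{lemma:oppedges} alone does not settle it, because $[c,d]$ need not be prime. The induction on the length of $[c,d]$ that you mention does work. Assume $c\wedge q=p$, $d=c\vee q$, and all edges of $[c,d]$ are in $S$, and pick $c\prec w\le d$. Since $p=c\wedge q\le w\wedge q\le q$ and $p\prec q$, we have either $w\wedge q=q$ or $w\wedge q=p$.
\begin{itemize}
\item If $w\wedge q=q$, then $q\le w$ forces $w=d$. So $(c,d)$ is an $S$-edge with $[c,d]\pdn[p,q]$, and one more step of the sequence gives $(p,q)\in S$.
\item If $w\wedge q=p$, then $w<d$, $w\wedge q=p$ and $w\vee q=d$. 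Thus $[w,d]$ is a strictly shorter interval satisfying the same hypotheses, and the induction hypothesis applies.
\end{itemize}
Equivalently, climb a maximal chain $c=w_0\prec\dots\prec w_r=d$ and let $i$ be least with $q\le w_i$. Then $w_{i-1}\wedge q=p$ and $w_{i-1}\vee q=w_i$, so $[w_{i-1},w_i]\pdn[p,q]$. With this filled in, your proof of the \quot{only if} direction is complete, and no appeal to Gr\"atzer's original argument is needed.
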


For $u$ in a quasiordered set $(A;\rho)$, $\idl u$ or $\idla A u$ 
denotes $\set{x\in A: x\leq_\rho u}$, while $\fil u$ or $\fila A u$ stands for $\set{x\in A: u\leq_\rho x}$. These notations apply in posets and lattices as well.  
For an edge $(a,b)\in \Edge L$ of a finite lattice $L$, let $\Foot a b$ or $\SFoot L a b$ denote the set of minimal elements of $\idl b\setminus \idl a$; we often denote its elements by $b'$ or $b''$.  The following paragraph, which is now well known, is basically taken from Day \cite[Page 71]{aday}.

For  $(a,b)\in\Edge L$, let $b'\in \Foot a b$. Clearly, $b'\neq 0$. If  $x$ and $y$ were distinct lower covers of $b'$, then (by the minimality of $b'$) they would belong to $\idl a$,  whence $b'=x\vee y\in\idl a$ would be a contradiction. Hence 
\begin{equation}\text{for every }(a,b)\in\Edge L,\text{ we have }\Foot a b\subseteq \Jir L.
\label{eq:hlKtngKGlmch}
\end{equation}
Note that in some cases \eqref{eq:hlKtngKGlmch} will be used only implicitly. For  $b'\in\Foot a b$, the covering relations $a\prec b$ and $\locov{b}'\prec b'$ easily imply that $[a,b]\pdn [\locov{b}',b']$. Hence \eqref{eq:mHprsrTvkF} implies $\con(\locov b',b')=\con(a,b)$. This equality,  \eqref{eq:hmVkGrb}, and \eqref{eq:hlKtngKGlmch} give that 
\begin{align}
\con(\jipair{b'})&=\con(a,b)\text{ for all }(a,b)\in\Edge L\text{ and }b'\in\Foot a b,
\label{eq:hlMpcLlskLmzs}
\\
\Jir{\Con L} &= \set{\con(\jipair x) :x\in\Jir L}
\text{, and}
\label{eq:mKflvGmgfll}
\\
\jipair{\Jir L}&:=\set{\jipair x: x\in \Jir L}\text{ is a congruence-determining edge set.}
\label{eq:mhvrVkGmT}
\end{align}
As \eqref{eq:mhvrVkGmT} indicates, $\Con L$ can be studied via $\Jir L$ rather than $\Edge L$, as demonstrated by, say,  Day \cite{aday}, Freese \cite{freese}, and Nation \cite[Theorem 316]{Nation}. Nevertheless, we usually rely on $\Edge L$, since the \quot{critical part} of $L$ often lacks join-irreducible elements and  $\Edge L$  offers better insight; for example, see the $H\cong H_2$ case in the proof of Lemma \ref{lemma:3lowcov}. We need the following lemma.

\begin{lemma}\label{lemma:diffeet} Given a finite lattice $L$, let $(u_1,v_1), (u_2,v_2)\in \Edge L$. Pick  $v_1'\in \Foot {u_1}{v_1}$ and $v_2'\in \Foot{u_2}{v_2}$. Assume that
either $u_1\leq u_2$ and $v_1\nleq v_2$, or that $v_1\leq u_2$. 
Then $v_1'\neq v_2'$.  
\end{lemma}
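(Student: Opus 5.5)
We argue by contradiction: assume $v_1'=v_2'=:w$. The definition of $\Foot a b$ gives that $w$ is a minimal element of $\idl{v_1}\setminus\idl{u_1}$ and also of $\idl{v_2}\setminus\idl{u_2}$. In particular, we will use only the four membership facts
\[
w\leq v_1,\qquad w\nleq u_1,\qquad w\leq v_2,\qquad w\nleq u_2 .
\]
Minimality and \eqref{eq:hlKtngKGlmch} are not needed. The proof then splits according to the two alternative hypotheses.

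\emph{Case $v_1\leq u_2$.} This is immediate. From $w\leq v_1\leq u_2$ we get $w\in\idl{u_2}$, which contradicts $w\notin\idl{u_2}$.

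\emph{Case $u_1\leq u_2$ and $v_1\nleq v_2$.} Here we use the covering $u_1\prec v_1$.
\begin{itemize}
\item Since $w\leq v_1$ and $w\nleq u_1$, we have $u_1<u_1\vee w\leq v_1$.
\item The covering $u_1\prec v_1$ then forces $u_1\vee w=v_1$.
\item On the other hand, $u_1\leq u_2\leq v_2$ and $w\leq v_2$ give $u_1\vee w\leq v_2$.
\item Hence $v_1\leq v_2$, contradicting the hypothesis $v_1\nleq v_2$.
\end{itemize}
So in both cases $v_1'\neq v_2'$.

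There is no real obstacle here. The only step requiring thought is using the covering $u_1\prec v_1$ to identify $u_1\vee w$ with $v_1$. The second case does not need $u_2\prec v_2$ beyond $u_2\leq v_2$.
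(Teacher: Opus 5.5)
Your proof is correct and follows the paper's argument in essence. The case $u_1\le u_2$, $v_1\nleq v_2$ is the paper's computation $v_1=v_1'\vee u_1\le v_2'\vee u_2=v_2$, except that you only need $u_2\le v_2$ and $w\le v_2$ rather than $v_2=u_2\vee v_2'$. For the case $v_1\le u_2$, you obtain the contradiction $v_2'=v_1'\le v_1\le u_2$ directly, whereas the paper reaches the same point via the join computation $v_2=u_2\vee v_1'=u_2$; your version is slightly shorter.
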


\begin{proof} Suppose, contrary to the assumptions, that $v_1'=v_2'$. For $i\in\set{1,2}$,  $v_i=u_i\vee v_i'$.
If $u_1\leq u_2$ but $v_1\nleq v_2$, then $v_1=v_1'\vee u_1=v_2'\vee u_1\leq v_2'\vee u_2=v_2$, which contradicts $v_1\nleq v_2$. If $v_1\leq u_2$, 
then $u_1\prec v_1\leq u_2$ implies $u_2=u_2\vee u_1$, whence
$v_2=u_2\vee v_2'=u_2\vee v_1'=u_2\vee u_1\vee  v_1'=u_2\vee v_1=u_2$ is a contradiction, completing the proof.
\end{proof}

The following lemma is straightforward, so we omit its proof.

\begin{lemma}\label{lemma:mslhGntCh}
Every meet-semilattice generated by three pairwise incomparable elements is isomorphic to one of those depicted in Figure \ref{figkorte}.
\end{lemma}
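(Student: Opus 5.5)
Throughout, let $a,b,c$ be the three pairwise incomparable generators of the meet-semilattice $S$, and put $d:=a\wedge b\wedge c$. The plan is to show that $S$ is determined, up to isomorphism, by a single number $k\in\set{0,1,2,3}$, and then to match the resulting four semilattices with the four diagrams of Figure~\ref{figkorte}.

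First, every element of $S$ is a meet of a nonempty subset of $\set{a,b,c}$. Hence
\[
S=\set{a,b,c,\ a\wedge b,\ a\wedge c,\ b\wedge c,\ d}.
\]
Since the generators are pairwise incomparable, no pairwise meet equals a generator; for example, $a\wedge b=a$ would give $a\leq b$. Also, $d$ lies below every element of $S$. So the only possible coincidences are among the three pairwise meets and $d$.

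The key observation is the following. If two distinct pairwise meets are comparable, say $a\wedge b\leq a\wedge c$, then $a\wedge b\leq c$, so $a\wedge b\leq d$ and thus $a\wedge b=d$. In particular, if two pairwise meets are equal, then both equal $d$. Therefore the pairwise meets that are strictly above $d$ are pairwise distinct and pairwise incomparable, and every other pairwise meet collapses to $d$. Let $k\in\set{0,1,2,3}$ denote the number of pairwise meets strictly above $d$. Up to a permutation of $a,b,c$, only $k$ matters.

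Next, the whole order of $S$ is forced by $k$. A pairwise meet $a\wedge b\neq d$ lies below $a$ and $b$. It does not lie below $c$, since otherwise it would equal $d$. It is incomparable to the other surviving pairwise meets by the observation above. Each generator lies above only those meets that involve it. Hence $S$ has exactly $4+k$ elements and is unique up to isomorphism for each $k$. The four cases are: three maximal elements over a common bottom ($k=0$); a bottom, one doubly covered meet, and three tops ($k=1$); and so on up to the seven-element semilattice in which all three pairwise meets are distinct ($k=3$).

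It remains to check that each value of $k$ actually occurs, for instance inside a suitable subsemilattice of a Boolean lattice, and to compare these four semilattices with the pictures in Figure~\ref{figkorte}. The main obstacle is merely bookkeeping: making sure no coincidence is overlooked. The key observation handles exactly this step.
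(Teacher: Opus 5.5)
Your argument is correct and complete. The paper omits this proof as straightforward, and your classification by the number $k\in\set{0,1,2,3}$ of pairwise meets strictly above $a\wedge b\wedge c$ supplies the routine verification that was left out. Its key step is that two distinct comparable pairwise meets force the smaller one to equal the triple meet. The values $k=3,0,1,2$ give $H_1,H_2,H_3,H_4$ of Figure~\ref{figkorte}, respectively, as can be read off from how these semilattices are used in the proof of Lemma~\ref{lemma:3lowcov}. Checking that each $k$ actually occurs is not needed for the statement as formulated.
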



\begin{figure}[ht] 
\centerline{ \includegraphics[width=\figwidthcoeff\textwidth]{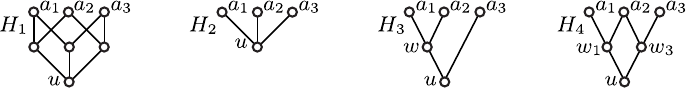}} 
\caption{The meet-semilattices generated by a three-element antichain}\label{figkorte}
\end{figure}

\begin{lemma}\label{lemma:nofourjred}
If $|\Jr L|\geq 4$ or $|\Mr L|\geq 4$ for a finite lattice $L$, then $\cd L\leq 4/64$. Similarly, if $L$ has an at-most $(|L|-5)$-element congruence-determining edge set, then $\cd L\leq 4/64$.
\end{lemma}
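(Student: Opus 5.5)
Both claims will follow from \eqref{eq:knRlhKrhkn}, \eqref{eq:brnkmR} and \eqref{eq:mhvrVkGmT}, combined with the crude bound that a quasiordered set with $m$ elements has at most $2^m$ ideals. I will prove the second claim first and then derive the first from it.

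\emph{Second claim.} Let $B$ be a congruence-determining edge set with $|B|\leq |L|-5$. By \eqref{eq:brnkmR}, $\Con L\cong \Idl{B;\nu(L)}$. Every ideal is a subset of $B$, so
\[
|\Con L|\leq 2^{|B|}\leq 2^{|L|-5}.
\]
Hence
\[
\cd L=|\Con L|/2^{|L|-1}\leq 2^{-4}=4/64.
\]

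\emph{First claim, case $|\Jr L|\geq 4$.} By \eqref{eq:mhvrVkGmT}, the set $\jipair{\Jir L}$ is a congruence-determining edge set. Its size is $|\Jir L|$, which by \eqref{eq:zsszmrdLjVntWz} equals $|L|-1-|\Jr L|\leq |L|-5$. The second claim then gives $\cd L\leq 4/64$.

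\emph{First claim, case $|\Mr L|\geq 4$.} I would argue by duality rather than build a dual congruence-determining edge set. A lattice and its dual have the same congruences, since a congruence is defined symmetrically in $\vee$ and $\wedge$. So $\Con{L\dual}=\Con L$, and the two lattices have the same number of elements, whence $\cd{L\dual}=\cd L$. Meet-reducible elements of $L$ are exactly the join-reducible elements of $L\dual$, so $|\Jr{L\dual}|=|\Mr L|\geq 4$. The previous case applied to $L\dual$ then yields $\cd L\leq 4/64$.

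There is no real obstacle here. The only points needing care are the counting in \eqref{eq:zsszmrdLjVntWz}, namely that $0$ is neither join-irreducible nor join-reducible, and the justification of the duality step.
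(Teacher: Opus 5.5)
Your proof is correct and follows the paper's argument essentially verbatim. You bound $|\Con L|\leq 2^{|B|}$ via \eqref{eq:brnkmR}, apply this to the congruence-determining edge set $\jipair{\Jir L}$, whose size is at most $|L|-5$ by \eqref{eq:zsszmrdLjVntWz}, and handle the case $|\Mr L|\geq 4$ by duality using $\Con{L\dual}=\Con L$.
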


\begin{proof} Let $n:=|L|$. To verify the second part, let $B$ be an at-most $(n-5)$-element congruence-determining edge set. Then $|\Con L|\leq 2^{n-5}$ by \eqref{eq:brnkmR}. Hence $\cd L\leq 2^{n-5}/2^{n-1}=4/64$, as required. When proving the first part, we may, by duality\footnote{Since $\Con L=\Con{L\dual}$, the Duality Principle applies.}, assume $|\Jr L|\geq 4$. Then $\jipair{\Jir L}$, which is a congruence-determining edge set by \eqref{eq:mhvrVkGmT},  has at most $n-5$ elements by \eqref{eq:zsszmrdLjVntWz}. Hence the already established second part applies,  completing the proof of Lemma \ref{lemma:nofourjred}.
\end{proof}

With the notation introduced in \eqref{eq:pPrdnjDgk} and with $B\subseteq \Edge L$, we say that the quasiordered set $(B;\nu(L))$ is an \emph{antichain} if for every $x,y\in B$, $x\leq_{\nu(L)} y$ implies $x=y$. 

\begin{lemma}\label{lemma:lfkbbZmbt}
Assume that $L$ is an $n$-element lattice, $B\subseteq \Edge L$ is a congruence-determining edge set, $|B|\leq n-4$,  and $(B;\nu(L))$ is not an antichain. Then $\cd L\leq 6/64$.
\end{lemma}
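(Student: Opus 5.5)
By \eqref{eq:brnkmR}, $|\Con L|=|\Idl{B;\nu(L)}|$. So the whole task is to bound the number of ideals of the quasiordered set $(B;\nu(L))$ when it is not an antichain. We may also assume $|B|=m\le n-4$.

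First, since $(B;\nu(L))$ is not an antichain, there are distinct edges $x,y\in B$ with $x\leq_{\nu(L)} y$.

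Next, split the ideals $I$ of $(B;\nu(L))$ according to their behaviour on the pair $\set{x,y}$. Every ideal containing $y$ must also contain $x$. Hence the trace $I\cap\set{x,y}$ can only be $\emptyset$, $\set{x}$, or $\set{x,y}$; the trace $\set{y}$ is excluded. For each admissible trace, the remaining part $I\cap(B\setminus\set{x,y})$ is some subset of an $(m-2)$-element set, giving at most $2^{m-2}$ choices.

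It follows that
\[
|\Con L|=|\Idl{B;\nu(L)}|\leq 3\cdot 2^{m-2}\leq 3\cdot 2^{n-6}.
\]
This gives
\[
\cd L\leq \frac{3\cdot 2^{n-6}}{2^{n-1}}=\frac{3}{32}=\frac{6}{64},
\]
as required.

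The argument needs no lattice-theoretic input beyond \eqref{eq:brnkmR}. The only point to check is the case $x\equiv_{\nu(L)} y$, where the two edges are equivalent rather than strictly comparable. This causes no problem, since the bound only uses $x\leq_{\nu(L)} y$ with $x\neq y$; there is no real obstacle.
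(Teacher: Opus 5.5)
Your proof is correct and matches the paper's argument. The paper also fixes distinct $d\leq_{\nu(L)} e$ in $B$ and observes that the $2^{|B|-2}$ subsets containing $e$ but not $d$ are not ideals. Via \eqref{eq:brnkmR} this gives $|\Con L|\leq 2^{|B|}-2^{|B|-2}=3\cdot 2^{|B|-2}\leq 3\cdot 2^{n-6}$, which is the same bound as your three-admissible-traces count.
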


\begin{proof} Pick $e,d\in B$ such that $d\leq_{\nu(L)} e$ and $d\neq e$. Then $B$ has exactly $2^{|B|-2}$ subsets $X$ with $e\in X$ but $d\notin X$. Using \eqref{eq:brnkmR} and the fact that these subsets $X$ are not ideals, 
\[  |\Con L|= |\Idl{B;\nu(L)}|\leq 2^{|B|} - 2^{|B|-2}=3\cdot 2^{|B|-2}\leq 3\cdot 2^{n-6}.
\]
Thus $\cd L\leq 3\cdot 2^{n-6} / 2^{n-1}=6/64$, which completes the proof of Lemma \ref{lemma:lfkbbZmbt}.
\end{proof}

Sometimes, the following statement is more convenient than Lemma \ref{lemma:lfkbbZmbt}.

\begin{lemma}\label{lemma:khVnMtTlkBr} Let $L$ be a finite lattice with $|\Jr L|\geq 3$, and let $(u_1,v_1), (u_2,v_2)\in \Edge L$ such that $\con(u_1,v_1)$ and $\con(u_2,v_2)$ are comparable. If either $u_1\leq u_2$ and $v_1\nleq v_2$ or $v_1\leq u_2$, then $\cd L\leq 6/64$.  
\end{lemma}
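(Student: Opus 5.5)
We reduce to Lemma \ref{lemma:lfkbbZmbt}. Let $n:=|L|$. By \eqref{eq:mhvrVkGmT}, $\jipair{\Jir L}$ is a congruence-determining edge set, and since $|\Jr L|\geq 3$, \eqref{eq:zsszmrdLjVntWz} gives $|\jipair{\Jir L}|=|\Jir L|=n-1-|\Jr L|\leq n-4$. So the size hypothesis of Lemma \ref{lemma:lfkbbZmbt} holds for $B:=\jipair{\Jir L}$. It remains to find two distinct elements $d,e\in B$ with $d\leq_{\nu(L)} e$; that lemma then yields $\cd L\leq 6/64$.

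To produce $d$ and $e$, take $v_1'\in\Foot{u_1}{v_1}$ and $v_2'\in\Foot{u_2}{v_2}$. By \eqref{eq:hlKtngKGlmch}, both are join-irreducible, so $\jipair{v_1'},\jipair{v_2'}\in B$. By \eqref{eq:hlMpcLlskLmzs}, $\con(\jipair{v_i'})=\con(u_i,v_i)$ for $i\in\set{1,2}$. Hence $\con(\jipair{v_1'})$ and $\con(\jipair{v_2'})$ are comparable, because $\con(u_1,v_1)$ and $\con(u_2,v_2)$ are. Thus one of $\jipair{v_1'}$ and $\jipair{v_2'}$ is $\leq_{\nu(L)}$ the other.

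Distinctness is where the assumption on $u_1,v_1,u_2,v_2$ enters. Under either alternative, Lemma \ref{lemma:diffeet} gives $v_1'\neq v_2'$, so $\jipair{v_1'}\neq\jipair{v_2'}$ as edges. Therefore $(B;\nu(L))$ is not an antichain, and Lemma \ref{lemma:lfkbbZmbt} completes the proof.

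There is no real obstacle. The only point needing care is that the Lemma \ref{lemma:lfkbbZmbt} argument requires distinct edges, not distinct congruences, and this is exactly what Lemma \ref{lemma:diffeet} provides.
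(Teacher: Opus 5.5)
Your proposal is correct and matches the paper's proof essentially step for step. You take $B=\jipair{\Jir L}$ with $|B|\leq n-4$, use feet $v_1',v_2'$, get $v_1'\neq v_2'$ from Lemma \ref{lemma:diffeet} and comparability from \eqref{eq:hlMpcLlskLmzs}, and conclude with Lemma \ref{lemma:lfkbbZmbt}.
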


\begin{proof} Let $B:=\jipair{\Jir L}$. Then $B$ is a congruence-determining edge set by \eqref{eq:mhvrVkGmT}, and $|B|\leq n-4$ by \eqref{eq:zsszmrdLjVntWz}.
 Pick $v'_1\in\Foot{u_1}{v_1}$ and $v'_2\in\Foot{u_2}{v_2}$. Lemma  \ref{lemma:diffeet} and \eqref{eq:hlKtngKGlmch} imply that $v'_1\neq v'_2$ and $v'_1,v'_2\in\Jir L$. 
 Hence $\jipair{v'_1}$ and $\jipair{v'_2}$ are distinct members of $B$. 
Since $\con(u_1,v_1)$ and $\con(u_2,v_2)$ are comparable, so are $\jipair{v'_1}$ and $\jipair{v'_2}$ in $(B;\nu(L))$ by \eqref{eq:pPrdnjDgk} and \eqref{eq:hlMpcLlskLmzs}. Thus Lemma \ref{lemma:lfkbbZmbt} applies, completing the proof of Lemma \ref{lemma:khVnMtTlkBr}.
\end{proof}

For $2\leq k\in\Nplu$, we denote by $M_k$ the $(k+2)$-element lattice consisting of the elements $0$ and $1$, together with $k$ atoms; for $k\in\set{2,3,4}$, $M_k$ is drawn on the right of Figure \ref{figalma}.

\begin{lemma}\label{lemma:beffnachn} Let $L$ be a finite lattice with  $\cd L>6/64$.
For $3\leq k\in\Nplu$, let $S=\{u, a_1$, $\dots$, $a_k,v\}$ be a $(k+2)$-element sublattice of $L$ such that $u\prec a_i< v$ for all $i\in\set{1,\dots,k}$. (In particular, $S\cong M_k$.) Then $k+|\Jr L|\leq 4$. 
\end{lemma}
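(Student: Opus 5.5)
The plan is to exhibit a congruence-determining edge set with at most $|L|-5$ elements whenever $k+|\Jr L|\geq 5$. The second part of Lemma~\ref{lemma:nofourjred} then gives $\cd L\leq 4/64$, contradicting $\cd L>6/64$. So we argue by contraposition: assume $k+|\Jr L|\geq 5$ and let $n:=|L|$.

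\emph{Step 1: all edges $(u,a_i)$ generate the same congruence.} Fix distinct $i,j,\ell\in\set{1,\dots,k}$, which exist since $k\geq 3$. In $S\cong M_k$ we have $a_i\wedge a_j=u$ and $a_i\vee a_j=v$, so $[u,a_i]\pup[a_j,v]$. Likewise $a_\ell\wedge a_j=u$ and $a_\ell\vee a_j=v$, so $[u,a_\ell]\pup[a_j,v]$, that is, $[a_j,v]\pdn[u,a_\ell]$. By \eqref{eq:mHprsrTvkF}, or by Lemma~\ref{lemma:ggrprsprd}(B) applied to this chain of perspectivities, we get $\con(u,a_i)=\con(a_j,v)=\con(u,a_\ell)$. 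Since the indices were arbitrary, $\con(u,a_1)=\dots=\con(u,a_k)$. Note that the $(u,a_i)$ are edges by hypothesis, whereas $[a_j,v]$ need not be prime; this is why we rely on perspectivity of intervals and not on $\nu(L)$ directly.

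\emph{Step 2: distinct feet.} For each $i$, pick $a_i'\in\Foot u{a_i}$. By \eqref{eq:hlKtngKGlmch}, $a_i'\in\Jir L$. For $i\neq j$ we have $u\leq u$ and $a_i\nleq a_j$, so Lemma~\ref{lemma:diffeet} yields $a_i'\neq a_j'$. By \eqref{eq:hlMpcLlskLmzs}, $\con(\jipair{a_i'})=\con(u,a_i)$. Combined with Step~1, this shows that $\jipair{a_1'},\dots,\jipair{a_k'}$ are $k$ distinct members of $B:=\jipair{\Jir L}$ that are pairwise $\equiv_{\nu(L)}$-equivalent.

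\emph{Step 3: counting.} By \eqref{eq:mhvrVkGmT}, $B$ is congruence-determining, and $|B|=|\Jir L|=n-1-|\Jr L|$ by \eqref{eq:zsszmrdLjVntWz}. Let $B':=B\setminus\set{\jipair{a_2'},\dots,\jipair{a_k'}}$. Every removed edge is $\nu(L)$-equivalent to $\jipair{a_1'}\in B'$, so $B'$ is still congruence-determining. Its size is
\[
|B'|=n-1-|\Jr L|-(k-1)=n-(k+|\Jr L|)\leq n-5 .
\]
Lemma~\ref{lemma:nofourjred} then gives $\cd L\leq 4/64<6/64$, the desired contradiction.

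The only delicate point is Step~1, where we must verify that all $k$ covering edges at the bottom of the $M_k$ are identified by a single congruence. Once this is established, the distinctness of the feet in Step~2 is immediate from Lemma~\ref{lemma:diffeet}, and the rest is counting.
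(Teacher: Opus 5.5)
Your proof is correct and follows the paper's argument: you obtain distinct feet $a_i'$ via Lemma~\ref{lemma:diffeet}, remove $\jipair{a_2'},\dots,\jipair{a_k'}$ from $\jipair{\Jir L}$, and feed the bound $|B'|\leq n-5$ into Lemma~\ref{lemma:nofourjred}. The only cosmetic difference is in Step~1: the paper picks $b_j$ with $a_j\leq b_j\prec v$ and uses the dual of Lemma~\ref{lemma:oppedges} to get $\con(u,a_i)=\con(b_j,v)$, whereas you use the perspectivity $[u,a_i]\pup[a_j,v]$ directly, which is equally valid because \eqref{eq:mHprsrTvkF} holds for arbitrary intervals.
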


\begin{proof} Assume the hypotheses. For $i\in\set{1,\dots,k}$, pick an element $b_i\in L$ such that $a_i\leq b_i\prec v$, and  pick an element $a_i'\in\Foot u{a_i}$.  By Lemma \ref{lemma:diffeet}, $|\set{a_1',\dots,a_k'}|=k$. The dual of Lemma \ref{lemma:oppedges} implies that $\con(u,a_i)=\con(b_j,v)$ for all $i, j\in\set{1,\dots,k}$ with $i\neq j$. 
Hence $\con(u,a_1)=\con(b_3,v)=\con(u,a_2)=\dots=\con(u,a_k)$. Thus, by \eqref{eq:hlMpcLlskLmzs}, $\con(\jipair{a_1'})=\dots= \con(\jipair{a_k'})$. By \eqref{eq:pPrdnjDgk}, this means that $\jipair{a_i'}\equiv_{\nu(L)} \jipair{a_j'}$ for all $i,j\in\set{1,\dots,k}$. Combining this fact with \eqref{eq:mhvrVkGmT},  we obtain that $B:=\jipair{\Jir L}\setminus \set{\jipair{a_2'},\dots, \jipair{a_k'}}$ is a congruence-determining edge set. By $|\set{a_1',\dots,a_k'}|=k$ and \eqref{eq:hlKtngKGlmch}, $|B|=|\Jir L|-(k-1)$. 
Taking \eqref{eq:zsszmrdLjVntWz} into account and denoting $|L|$ and $|\Jr L|$ by $n$ and $j$, respectively, we have $|B|=n-1-j-(k-1)=n-j-k$.
If we had $k+j\geq 5$, then $|B|\leq n-5$ and Lemma \ref{lemma:nofourjred} would imply $\cd L\leq 4/64$, which is a contradiction. 
Therefore, the integer $k+j$ is at most $4$, as required.  The proof of Lemma \ref{lemma:beffnachn} is complete. 
\end{proof}

For a sublattice $S$ of a finite lattice $L$, we say that $L$ is a \emph{dismantlable extension} of $S$ if there are sublattices $T_0, T_1,\dots, T_k$  for some $k\in\Nnul:=\Nplu\cup\set{0}$ such that
\begin{equation}
S=T_0\subset T_1\subset\dots\subset T_{k-1}\subset T_k=L
\text{ and }|T_i\setminus T_{i-1}|=1\text{ for }i\in\set{1,\dots,k}.
\label{eq:dismnTlb}
\end{equation}
Recall from Cz\'edli \cite{czgcdofS} that for any sublattice $S$ of a finite lattice $L$,
\begin{equation}
\text{if }L\text{ is a dismantlable extension of }S\text{, then }\cd L\leq \cd S.
\label{eq:dsmntlExtsm}
\end{equation}

We present the following lemma, as it will be used repeatedly.

\begin{lemma}\label{lemma:pdgbpLsT} If $S$ is a nonempty subset of a finite lattice $L$, then the following two assertions hold.
\begin{enumerate}
\item\label{lemma:apdgbpLsT} 
If  $\Mr L\subseteq S$ and $\Jr L\subseteq S$, then $S$ is a sublattice of $L$ and $\cd L\leq \cd S$. 
\item\label{lemma:bpdgbpLsT} If $\cd L>4/64$, $|S\cap \Jr  L| \geq 3$, and $|S\cap \Mr L| \geq 3$, then  $S$ is a sublattice of $L$, $\Mr L\subseteq S$, $\Jr L\subseteq S$, $|\Mr L|=|\Jr L|=3$, and $\cd L\leq \cd S$.
\end{enumerate}
\end{lemma}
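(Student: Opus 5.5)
Part \eqref{lemma:apdgbpLsT} comes first, and the plan is to show $S$ is closed under joins and meets and then realise $L$ as a dismantlable extension of $S$, so that \eqref{eq:dsmntlExtsm} applies.

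For closure, take $x,y\in S$. If they are comparable there is nothing to prove. Otherwise $x\vee y$ has at least two lower covers: one below $x\vee y$ above $x$ and one above $y$, and these are distinct because $x\parallel y$. (More carefully, if $x\vee y$ had a unique lower cover $c$, then $x,y\le c$, contradicting the definition of the join.) Hence $x\vee y\in\Jr L\subseteq S$. Dually, $x\wedge y\in\Mr L\subseteq S$. So $S$ is a sublattice.

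For dismantlability, list $L\setminus S=\{z_1,\dots,z_k\}$ and set $T_i:=S\cup\{z_1,\dots,z_i\}$. Each $T_i$ contains $S$, hence contains $\Jr L$ and $\Mr L$. By the closure argument just given, each $T_i$ is therefore a sublattice. Thus \eqref{eq:dismnTlb} holds, and \eqref{eq:dsmntlExtsm} gives $\cd L\le\cd S$. This part should be routine. The only point to check is that the closure argument used nothing about $S$ beyond containing $\Jr L\cup\Mr L$.

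For part \eqref{lemma:bpdgbpLsT}, the plan is to reduce to part \eqref{lemma:apdgbpLsT}. Since $\cd L>4/64$, Lemma~\ref{lemma:nofourjred} gives $|\Jr L|\le 3$ and $|\Mr L|\le 3$. Combined with $|S\cap\Jr L|\ge 3$, this forces $\Jr L\subseteq S$ and $|\Jr L|=3$; dually, $\Mr L\subseteq S$ and $|\Mr L|=3$. Part \eqref{lemma:apdgbpLsT} then yields that $S$ is a sublattice and $\cd L\le\cd S$.

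The main (mild) obstacle is the first step, the closure of $S$. It must rely on incomparable pairs having a join-reducible join. In particular, the case where $x\vee y$ equals $x$ or $y$ has to be separated off, and that case is exactly the comparable case, so no extra work should be needed there.
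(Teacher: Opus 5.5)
Your proposal is correct and follows the paper's proof. Part \eqref{lemma:apdgbpLsT} goes through the same steps: closure under joins because $x\parallel y$ forces $x\vee y\in\Jr L$, closure under meets by duality, then dismantlability and \eqref{eq:dsmntlExtsm}. Part \eqref{lemma:bpdgbpLsT} is likewise the paper's reduction: Lemma~\ref{lemma:nofourjred} forces $\Jr L,\Mr L\subseteq S$, and then part \eqref{lemma:apdgbpLsT} applies. Your explicit chain $T_i=S\cup\{z_1,\dots,z_i\}$ simply spells out what the paper dismisses with ``clearly''.
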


\begin{proof} Assume that $\Mr L\subseteq S$ and $\Jr L\subseteq S$. We need to show that for every $x,y\in S$, the join $x\vee_L y$ (taken in $L$) is in $S$. If $x\parallel y$, then $x\vee_L y\in\Jr L\subseteq S$. For $x\nparallel y$, $x\vee_L y\in\set{x,y}\subseteq S$. Hence $S$ is closed with respect to joins. Thus $S$ is a sublattice of $L$ by the duality principle. Clearly, $L$ is a dismantlable extension of $S$. Hence, by \eqref{eq:dsmntlExtsm}, $\cd L\leq \cd S$. This proves Part \eqref{lemma:apdgbpLsT}.

To prove Part \eqref{lemma:bpdgbpLsT}, observe that $\cd L>4/64$ and Lemma \ref{lemma:nofourjred} imply $|\Jr L|\leq 3$ and $|\Mr L|\leq 3$. These inequalities, $|S\cap \Jr  L| \geq 3$, and $|S\cap \Mr L| \geq 3$ yield that $\Mr L\subseteq S$ and $\Jr L\subseteq S$. Hence the already proven Part \eqref{lemma:apdgbpLsT} implies Part \eqref{lemma:bpdgbpLsT} of Lemma \ref{lemma:pdgbpLsT}.
\end{proof}

The following lemma will also be needed.

\begin{lemma}\label{lemma:circedgglu}
Assume that $H$ and $K$ are finite lattices,   $K\in\Circ k$ with $4\leq k\in\Nplu$, and $L$ is an edge gluing of $H$ and $K$ (in either order). Then 
\begin{enumerate}
\item\label{lemma:acircedgglu} 
$\cd L \leq   \cd H /2$, and  
\item\label{lemma:bcircedgglu}
$\cd L =\cd H/2$ if and only if $k=4$ (that is, if $K=B_4$).   
\end{enumerate}
\end{lemma}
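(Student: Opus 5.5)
The plan is to compute $|\Con L|$ for the edge gluing directly, using the standard description of congruences of a Hall--Dilworth gluing. Let $e=(p,q)$ be the common edge, so $H\cap K=\set{p,q}$ and $|L|=|H|+k-2$. For a gluing over a convex sublattice, the map $\alpha\mapsto(\alpha|_H,\alpha|_K)$ is a bijection from $\Con L$ onto the set of pairs $(\beta,\gamma)\in\Con H\times\Con K$ that agree on $H\cap K$; I would quote this from Gr\"atzer \cite{GGfound}. Here $H\cap K$ is the two-element chain $\set{p,q}$, so agreement just means that either both $\beta$ and $\gamma$ collapse $e$ or neither does. Write $A_1$ and $A_0$ for the numbers of congruences of $H$ that do and do not collapse $e$, and $B_1,B_0$ for the analogous numbers in $K$. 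Then
\[|\Con L|=A_1B_1+A_0B_0\leq |\Con H|\cdot\max(B_0,B_1),\]
and part \eqref{lemma:acircedgglu} reduces to proving $\max(B_0,B_1)\leq 2^{k-3}$. Indeed, that bound gives $\cd L\leq |\Con H|\,2^{k-3}/2^{|H|+k-3}=\cd H/2$.

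To bound $B_0$ and $B_1$, I use \eqref{eq:knRlhKrhkn}, which gives $\Con K\cong\Idl{\Edge K;\nu(K)}$. Let $m$ be the number of $\equiv_{\nu(K)}$-classes. By \eqref{eq:mKflvGmgfll}, $m\leq|\Jir K|$. In a circle every element other than $0$ and $1$ has exactly one lower cover, so $|\Jir K|=k-2$ and hence $m\leq k-2$. Let $c$ be the class of $e$. Then $B_1$ counts the down-sets containing $c$ and $B_0$ those avoiding $c$, and each is at most $2^{m-1}\leq 2^{k-3}$. This proves part \eqref{lemma:acircedgglu}.

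For part \eqref{lemma:bcircedgglu}, first take $k=4$. Then $K=B_4$, $\Con K$ is the four-element Boolean lattice, the class of $e$ is one of its two atoms, and $B_0=B_1=2=2^{k-3}$. So equality $|\Con L|=2|\Con H|$ holds regardless of $A_0,A_1$.

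For $k\geq5$, I will show that $B_0<2^{k-3}$ and $B_1<2^{k-3}$. Since $A_0+A_1=|\Con H|\geq1$, this gives strict inequality $|\Con L|<|\Con H|\,2^{k-3}$. If $m<k-2$ we are done, so assume $m=k-2$. Write $K$ as the union of two maximal chains with $s\geq t\geq2$ edges, where $s+t=k$ and $s\geq3$. By Lemma \ref{lemma:oppedges} and its dual, the bottom edge of each side is perspective to the top edge of the other side. Using Lemma \ref{lemma:ggrprsprd}, a middle edge $(x,y)$ of the long side lies $\nu$-below both the bottom-left and the bottom-right classes: the top edge $[y',1]$ of the short side projects down onto the interval $[x_0,y]$ of the long side, where $x_0$ is the bottom of the long side's interior chain, and that interval contains $(x,y)$, and symmetrically from the other side. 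So we have two distinct strict relations $d<d_1$ and $d<d_2$ among the classes.

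The main obstacle is the subsequent case analysis. I have to check that, whichever class $c$ is, both the family of down-sets containing $c$ and the family avoiding it lose at least one subset because of these relations. If $c\notin\set{d,d_1,d_2}$, then the relation $d<d_1$ already cuts both families. If $c=d_1$, then $B_0$ is cut by $d<d_2$ and $B_1$ by $d<d_1$; the case $c=d_2$ is symmetric. If $c=d$, then $B_0\leq 2^{m-3}$ because $d_1,d_2$ are forced out, and $B_1$ is cut by the presence of further middle edges or classes. This last sub-case needs care when $s=3$, where I expect to compute $\Con K$ for $N_5$ and $N_6,N_6'$ explicitly and then argue inductively for longer circles.
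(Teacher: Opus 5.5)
Part (a) and the case $k=4$ of part (b) are correct. Your route is genuinely different from the paper's. You use the gluing lemma to get $|\Con L|=A_1B_1+A_0B_0$. The paper instead quotes \cite{czg864} for $k=4$. It handles $k=5$ by comparing $L$ with the edge gluing $L\setminus\set c$ of $H$ and $B_4$, using the map $X\mapsto X\cap B'$ on ideals of $\jipair{\Jir L}$. It then reduces $k\geq 6$ to $k=5$ via dismantlable extensions.

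The gap is in the strict inequality for $k\geq 5$. You set out to prove $B_0<2^{k-3}$ and $B_1<2^{k-3}$ for whatever class $c$ the edge $e$ belongs to, and you leave the sub-case $c=d$ open. In that sub-case the claim is false. Take $K=N_5$ and $e=(a,c)$, the monolith edge. Four of the five congruences of $N_5$ collapse $e$, so $B_1=4=2^{k-3}$. The announced case analysis therefore cannot be completed as stated.

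Either of two observations closes the gap.

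First, in an edge gluing the common edge is a filter of the lower summand and an ideal of the upper one. So in $K$ it contains $0_K$ or $1_K$. By your own remark that each bottom edge is perspective to the opposite top edge, its class is $d_1$ or $d_2$. Hence only your case $c=d_1$ (up to symmetry) ever occurs, and the cases $c=d$ and $c\notin\set{d,d_1,d_2}$ are vacuous.

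Second, and simpler, you do not need both inequalities to be strict. We have $A_0\geq 1$ (the identity congruence of $H$) and $A_1\geq 1$ (the total congruence). Therefore $|\Con H|\,2^{k-3}-|\Con L|=A_1(2^{k-3}-B_1)+A_0(2^{k-3}-B_0)$ is positive as soon as $B_0+B_1=|\Con K|<2^{k-2}$. That inequality already follows from the single strict relation $d<d_1$, or from $m<k-2$.

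With either fix your argument is complete, and it gives more than the paper's proof. For $K\in\Circ k$, the poset $\Jir{\Con K}$ consists of the $k-4$ middle-edge classes lying below two incomparable maximal classes, and one of those maximal classes contains $e$. So $B_1=2$ and $B_0=2^{k-4}+1$, which yields the exact formula $|\Con L|=2A_1+(2^{k-4}+1)A_0$. This formula reproduces the values $10$, $13$, $14$, and $19$ that the paper obtains for specific gluings.
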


\begin{proof} 
We know from Cz\'edli \cite[Lemma 4]{czg864}
that $k=4$ implies $\cd L=\cd H/2$. 

Next, let $k=5$, that is, $K=N_5$; we need to show that $\cd L<\cd H/2$. By duality, we can assume that $N_5$ is the \quot{upper lattice} in the gluing. As in Figure \ref{figalma}, $(a,c)\in\Edge{N_5}$ denotes the so-called \emph{monolith edge} of $N_5$. Since $c\in L$ is outside $H$, $L':=L\setminus\set c$ is an edge gluing of $H$ and $B_4=N_5\setminus \set c$.
We already know that $\cd {L'}=\cd H/2$, whence it suffices to show that $\cd L<\cd{L'}$. 
Let $B':=\jipair{\Jir{L'}}$ and $B:=\jipair{\Jir{L}}$.
Since $\Jir{L} = \Jir{L'}\cup\set{c}$, we have
$B=B'\cup\set{\jipair c} =B'\cup\set{(a,c)}$.
We claim that for any $X$, 
\begin{equation}
\text{if }X\in\Idl{B;\nu(L)} ,\text{ then }
X\cap B'\in\Idl{B';\nu(L')}.
\label{eq:tbmCtmTkrlM}
\end{equation}
For $\jipair u\in X\cap B'$ and $\jipair v\in B'$, assume that $ \jipair v\leq_{\nu(L')} \jipair u$. 
Note at this point that none of $u$ and $v$ equals the join-reducible element $1_L=1_{L'}$, whence $\jipair u$ and $\jipair v$ mean the same in $L$ as in $L'$.
The inequality $ \jipair v\leq_{\nu(L')} \jipair u$ means that $\jipair v=(\locov v,v)\in \con_{L'}(\locov u,u)=\con_{L'}(\jipair u)$. By Lemma \ref{lemma:ggrprsprd}(A), this membership is witnessed by sequences of edges and intervals of $L'$. Even though an edge $(x_i,y_i)\in \Edge{L'}$ need not be an edge of $L$, Lemma \ref{lemma:ggrprsprd}(B) implies $\jipair v\in \con_{L}(\jipair u)$, that is, $\jipair v\leq_{\nu(L)}\jipair u$. Using $X\in\Idl{B;\nu(L)}$, we obtain $\jipair v\in X$. Hence, $\jipair v\in X\cap B'$, proving \eqref{eq:tbmCtmTkrlM}.

By  \eqref{eq:tbmCtmTkrlM}, the assignment $X\mapsto X\cap B'$ is a function 
\begin{equation*}f\colon \Idl{B;\nu(L)} \to \Idl{B';\nu(L')}.
\end{equation*} 
Since $B\setminus B'=\set{\jipair c}$, each $Y\in \Idl{B';\nu(L')}$ has at most two $f$-preimages: $Y$ and $Y\cup\set{\jipair c}$. 
Therefore,
\begin{equation}
|\Idl{B;\nu(L)}|\leq 2\cdot|\Idl{B';\nu(L')}|,
\label{eq:amgrJjPlz}
\end{equation}
and the inequality in \eqref{eq:amgrJjPlz} 
\begin{equation}
\text{is an equality }\iff \text{each }Y\in\Idl{B';\nu(L')}\text{ has exactly two }f\text{-preimages.}
\label{eq:bmgrJjPlz}
\end{equation}
Now, consider the element $b\in N_5$ shown in Figure \ref{figalma}, and pick $Y\in \Idl{B';\nu(L')}$ such that $\jipair b\in Y$; for example, we may take $Y:=B'$. Here $\jipair b=(\locov b,b)=(0_{N_5},b)$.
From  $[\locov b,b]\pup [a,1]$ and \eqref{eq:mHprsrTvkF}, we obtain $(a,1)\in \con_L(\jipair b)$.
Since the blocks of any lattice congruence are convex subsets, $a\leq c\leq 1$ and the convexity of the $\con_L(\jipair b)$-block of $a$ imply $\jipair c=(a,c)\in \con_L(\jipair b)$. Thus $\con_L(\jipair c)\leq \con_L(\jipair b)$, which means that  $\jipair c\leq_{\nu(L)}\jipair b$. Hence $Y$, which does not contain $\jipair c$, cannot be an $f$-preimage of $Y$, whence the only $f$-preimage of $Y$ is $Y\cup\set{\jipair c}$. Thus \eqref{eq:amgrJjPlz} and \eqref{eq:bmgrJjPlz} imply $|\Idl{B;\nu(L)}| < 2\cdot|\Idl{B';\nu(L')}|$. This inequality turns into $|\Con{L}| < 2\cdot |\Con{L'}|$ by \eqref{eq:brnkmR}. Therefore, using $|L|-1=|L'|$,
\begin{equation*}
\cd L=|\Con L|/2^{|L|-1} < 2\cdot |\Con{L'}|/2^{|L|-1}=|\Con{L'}|/2^{|L'|-1}=\cd{L'},
\end{equation*}
proving the lemma for $k=5$.

Finally, assume that $k\geq 6$. In $K$, we can find a $(k-5)$-element set $U$ of doubly irreducible elements 
such that $U\cap H=\emptyset$ and $K':=K\setminus U\in \Circ 5$. Then $L$ is a dismantlable extension of its sublattice $L':=L\setminus U$, and $L'$ is an edge gluing of $H$ and $K'$. Since $K'\in \Circ 5$ and thus $K'\cong N_5$, we have already seen that $\cd {L'}<\cd H/2$. Therefore, using that $\cd L\leq \cd{L'}$ by \eqref{eq:dsmntlExtsm}, we have $\cd{L}< \cd H/2$, completing the proof of Lemma \ref{lemma:circedgglu}.
\end{proof}

We conclude this section with the following convention, which will be used in the diagrams occurring in the paper.

\begin{convention} If a diagram contains edges of different line styles, then 
\begin{itemize}
\item a thin edge directed upward from a vertex $x$ to a vertex $y$ represents $x<y$,
\item a thick edge from $x$ to $y$  represents $x\prec y$, and
\item a dotted edge from $x$ to $y$ represents $x\leq y$. 
\end{itemize}
In diagrams containing only thin edges, an edge from $x$ upwards to $y$ can  represent either a covering $x\prec y$ (as usual) or, when only a few elements of a larger lattice $L$ are depicted, $x<y$. 
  If we label an edge $(a,b)$ with a black Greek letter,  then we may label an edge $(x,y)$ or an interval $[x,y]$ with the same red (grey in non-color prints) Greek letter whenever $\con(x,y)=\con(a,b)$.
\end{convention}

\section{The case of a three-element antichain}

As the lower covers of an element form an antichain, the following lemma addresses a particular case in this section. For $M_3$, see Figure \ref{figalma}. 

\begin{lemma}\label{lemma:3lowcov}
Assume that $L$ is a finite lattice with $6/64<\cd L$, $v\in L$, and $v$ has at least three lower covers. Then $\cd L\leq 8/64$, and $v$ has exactly three lower covers. Furthermore, denoting by $u$ the meet of these lower covers, the interval $[u,v]$ is isomorphic to $M_3$.
\end{lemma}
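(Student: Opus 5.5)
Let $a_1,\dots,a_k$ ($k\geq 3$) be the lower covers of $v$. Since $v\in\Jr L$ and $\cd L>6/64>4/64$, Lemma~\ref{lemma:nofourjred} gives $|\Jr L|\leq 3$ and $|\Mr L|\leq 3$. The plan is to first show that $u:=a_1\wedge\dots\wedge a_k$ is covered by each $a_i$ and that $\set{u,a_1,\dots,a_k,v}$ is a sublattice. Then Lemma~\ref{lemma:beffnachn} gives $k+|\Jr L|\leq 4$. Because $v\in\Jr L$, this forces $k=3$, and then also $|\Jr L|=1$, so $v$ is the only join-reducible element. Finally we bound $\cd L$ by $8/64$.

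\textbf{Step 1: the structure of the meet-semilattice.} Any two lower covers of $v$ join to $v$. By Lemma~\ref{lemma:mslhGntCh}, the meets $a_1\wedge a_2$, $a_1\wedge a_3$, $a_2\wedge a_3$ and $u$ form one of the configurations of Figure~\ref{figkorte}. I aim to show that all pairwise meets coincide with $u$ and that each $a_i$ covers $u$. The tool is Lemma~\ref{lemma:oppedges} applied to $a=a_i$, $b=a_j$ (with $b\prec a\vee b=v$): for every $x$ with $a_i\wedge a_j\prec x\leq a_i$ we get $\con(a_i\wedge a_j,x)=\con(a_j,v)$. If some $a_i\wedge a_j$ differs from $u$, or if some $a_i$ does not cover $a_i\wedge a_j$, I will produce two edges $(u_1,v_1),(u_2,v_2)$ with comparable congruences and with $u_1\leq u_2$, $v_1\nleq v_2$ (or $v_1\leq u_2$). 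Such edges come from chains going down from $a_i$ to $a_i\wedge a_j$ and further to $u$, combined with the perspectivities $[a_j,v]\searrow$ onto the lower edges. Lemma~\ref{lemma:khVnMtTlkBr} then yields $\cd L\leq 6/64$, a contradiction. This requires $|\Jr L|\geq 3$. If that fails, I will instead count directly: the edges $(a_j,v)$ and the lower edges produce coincidences among the elements of $\jipair{\Jir L}$, and these coincidences shrink a congruence-determining edge set to at most $n-5$ elements, so Lemma~\ref{lemma:nofourjred} applies. In the same way, an edge strictly between $a_i\wedge a_j$ and $a_i$ would give two distinct feet (Lemma~\ref{lemma:diffeet}) with equal congruences, which again shrinks $B$.

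I expect Step~1, the case analysis over the meet-semilattice shapes of Figure~\ref{figkorte}, to be the main obstacle. The difficulty is arranging, in each shape, either a configuration suitable for Lemma~\ref{lemma:khVnMtTlkBr} or a shrinkage of $\jipair{\Jir L}$ by two further elements. When $|\Jr L|\leq 2$, I would rely on the second route throughout.

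\textbf{Step 2: the bound $8/64$.} Once $[u,v]=\set{u,a_1,a_2,a_3,v}\cong M_3$ has been established, we have $\con(u,a_i)=\con(a_j,v)$, and all six edges of $[u,v]$ generate the same congruence. Picking feet $a_i'\in\Foot u{a_i}$, which are distinct by Lemma~\ref{lemma:diffeet}, shows that $B:=\jipair{\Jir L}\setminus\set{\jipair{a_2'},\jipair{a_3'}}$ is congruence-determining. Here $|B|=|\Jir L|-2=n-1-|\Jr L|-2=n-4$. Hence $|\Con L|\leq 2^{n-4}$, that is, $\cd L\leq 8/64$.
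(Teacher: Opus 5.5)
You have a genuine gap, and it is in Step~1, which carries the real content of the lemma. The rest is fine: using Lemma~\ref{lemma:beffnachn} to force $k=3$ works, and Step~2 is correct. Step~2 even gives $\cd L\le 8/64$ by a self-contained count, where the paper instead cites \cite[Lemma~6]{czg864}. Step~1, however, is only announced, and the route you announce for $|\Jr L|\le 2$ cannot work. Shrinking $\jipair{\Jir L}$ to at most $n-5$ edges and invoking Lemma~\ref{lemma:nofourjred} would prove $\cd L\le 4/64$. Yet the configurations you must exclude occur with $|\Jr L|=1$ and $\cd L=6/64$ exactly.
\begin{itemize}
\item Subdivide one edge of $M_3$: take $u\prec b_1\prec a_1\prec v$ and $u\prec a_2,a_3\prec v$. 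Its only congruences are $0$, $1$, and $\con(b_1,a_1)$, the last with blocks $\set{b_1,a_1}$ and singletons. So $\cd L=3/32$.
\item Take the shape $H_3$: $u\prec w\prec a_1,a_2$, $u\prec a_3$, and $a_1,a_2,a_3\prec v$. This lattice also has exactly three congruences.
\end{itemize}
In both lattices every congruence-determining edge set has at least $n-4$ elements, so no $(n-5)$-element set exists.

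Your remark that an edge strictly between $a_i\wedge a_j$ and $a_i$ yields feet ``with equal congruences'' also fails in the first example. There $\con(b_1,a_1)$ lies strictly below $\con(u,b_1)$. What you get is only a comparability, and the tool that turns a comparability into $\cd L\le 6/64$ is Lemma~\ref{lemma:lfkbbZmbt}. Lemma~\ref{lemma:khVnMtTlkBr} is not available here, since it needs $|\Jr L|\ge 3$.

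The missing idea is the count the paper uses. $B_0:=\jipair{\Jir L}$ is congruence-determining and has at most $n-2$ elements, simply because $0,v\notin\Jir L$. You then exhibit four pairwise distinct feet (distinct by Lemma~\ref{lemma:diffeet}). Three of them generate the same congruence $\alpha$, and the fourth generates a congruence comparable with $\alpha$. Deleting two of the three leaves an $(n-4)$-element congruence-determining set that is not an antichain, and Lemma~\ref{lemma:lfkbbZmbt} gives $\cd L\le 6/64$. The choices are as follows.
\begin{itemize}
\item $H\cong H_2$ with $a_1$ not covering $u$: pick $u\prec b_1\le c\prec a_1$ and $u\prec b_i\le a_i$ for $i\in\set{2,3}$. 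Lemma~\ref{lemma:oppedges} gives $\con(u,b_i)=\con(a_j,v)=\alpha$ for all $i,j$. Then $[a_2,v]\pdn[u,a_1]\supseteq[c,a_1]$ gives $\con(c,a_1)\le\alpha$.
\item $H\cong H_3$ with $w:=a_1\wedge a_2>u$: the edges $(w,b_1)$, $(w,b_2)$, $(u,b_3)$ all generate $\alpha$. An edge $(u,b_4)$ with $b_4\le w$ generates $\con(a_3,v)\ge\alpha$.
\item $H\cong H_4$: here $|\Mr L|\ge 3$, so the dual of Lemma~\ref{lemma:khVnMtTlkBr} applies.
\item $H\cong H_1$: this is excluded by $|\Mr L|\le 3$.
\end{itemize}
Until these specific choices are supplied, Step~1 is not a proof, and your Steps~2 and the application of Lemma~\ref{lemma:beffnachn}, which rest on it, are not yet justified.
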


\begin{proof} Lemma 6 of Cz\'edli \cite{czg864} states that if $L$ has a three-element antichain, then $\cd L\leq 8/64$. Hence $\cd L\leq 8/64$, as required.  (A weaker statement, Lemma 5 of \cite{czg864} would also suffice for this conclusion.)

Let $n:=|L|$.
Our plan is to construct a small congruence-determining edge set. We start with $B_0:=\jipair{\Jir L}$. Using \eqref{eq:mhvrVkGmT} and  $0,v\notin \Jir L$, we obtain that 
\begin{equation}
B_0 \text{ is a congruence-determining edge set and }|B_0|\leq n-2.
\label{eq:mrtzrsgrtkk}
\end{equation}

Let $a_1,a_2,a_3$ be distinct lower covers of $v$, and denote $a_1\wedge a_2\wedge a_3$ by $u$. Let $H$ stand for the meet-subsemilattice generated by $\set{a_1,a_2,a_3}$. By Lemma \ref{lemma:mslhGntCh}, $H$ is isomorphic to one of the four meet-semilattices drawn in Figure \ref{figkorte}. 
Depending on $H$,  there are four cases to consider.

First, assume that $H\cong H_1$; see Figure \ref{figkorte}. Then $H\setminus\set{a_1, a_2, a_3}$ is a four-element subset of $\Mr L$, whence $|\Mr L|\geq 4$. Thus Lemma \ref{lemma:nofourjred} implies $\cd L\leq 4/64$, which contradicts $6/64<\cd L$. This excludes the case  $H\cong H_1$.


\begin{figure}[ht] 
\centerline{ \includegraphics[width=\figwidthcoeff\textwidth]{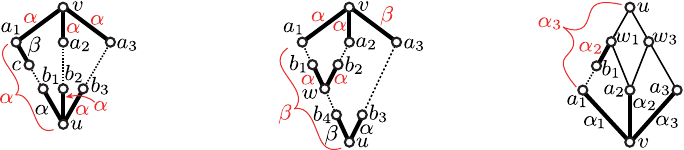}} 
\caption{Illustrations for the proof of Lemma \ref{lemma:3lowcov}}\label{figmalna}
\end{figure}

\logicbreak
Next, assume that $H\cong H_2$. Then  $S:=\set{u,a_1,a_2,a_3,v}$ is a sublattice of $L$, and it is isomorphic to $M_3$. We claim that $S$ is a cover-preserving sublattice of $L$, that is, $u\prec a_i\prec v$ for $i\in\set{1,2,3}$. 
Suppose, for a contradiction, that $S$ is not a cover-preserving sublattice. Then, by symmetry, we may assume that there are elements $b_1$ and $c$ such that $u\prec b_1\leq c\prec a_1$. 
The situation is visualized in the left part of Figure \ref{figmalna}; note that, here and later in some illustrations, if a diagram is not labeled by $L$, then $L$ may contain elements 
not shown. For $i\in\set{2,3}$, pick an element $b_i\in L$ such that $u\prec b_i\leq a_i$. 
Since $a_i\wedge a_j=u$, it follows easily that $b_i\parallel b_j$ for distinct $i,j\in\set{1,2,3}$ and, furthermore, $a_1\parallel b_2$ and $a_1\parallel b_3$. 
Denote $\con(u,b_1)$ by $\alpha$. Applying Lemma \ref{lemma:oppedges} and \eqref{eq:mHprsrTvkF} several times, we obtain that $\alpha=\con(a_2,v)=\con(u,b_3)=\con(a_1,v)=\dots$, that is, $\alpha=\con(u,b_i)=\con(a_j,v)$ for all $i,j\in\set{1,2,3}$.

Let $\beta:=\con(c,a_1)$.
Since $[a_2,v]\pdn [u,a_1]$, we have $(u,a_1)\in\alpha$ by \eqref{eq:mHprsrTvkF}. The blocks of a lattice congruence are always convex sublattices. Hence the $\alpha$-block of $a_1$ contains $c$.
Thus $(c,a_1)\in \alpha$, whence $\beta\leq \alpha$.

Note that our considerations like the one in the previous paragraph will be presented more concisely later; for example, in the form
\quot{$[a_2,v]\pdn [u,a_1] \supseteq [c,a_1] $ implies $\beta=\con(c,a_1)\leq \con(a_2,v)=\alpha$}.
  
For $i\in\set{1,2,3}$, pick an element $b_i'\in \Foot u {b_i}$, and pick an $a_1'\in \Foot c {a_1}$. 
Since $\con(u,b_i)=\alpha$ for every $i\in\set{1,2,3}$, each $\con(\jipair{b_i'})$ is $\alpha$ by \eqref{eq:hlMpcLlskLmzs}. Combining this with \eqref{eq:mrtzrsgrtkk}, we obtain that $B_1:=B_0\setminus\set{\jipair{b_2'},\jipair{b_3'}}$ is a congruence-determining edge set. 
By Lemma \ref{lemma:diffeet}, 
\begin{equation*}
|\set{\jipair{a_1'},\jipair{b_1'},\jipair{b_2'},\jipair{b_3'}}| = |\set{a_1',b_1',b_2',b_3'}|=4.
\end{equation*} 
This equality, \eqref{eq:hlKtngKGlmch}, and \eqref{eq:mrtzrsgrtkk} give that $|B_1|=|B_0|-2\leq n-4$ and that $\jipair{a_1'}$ and $\jipair{b_1'}$ are distinct elements of $B_1$. Since $\con(\jipair{a_1'})=\beta$ and $\con(\jipair{b_1'})=\alpha$ by \eqref{eq:hlMpcLlskLmzs},  $\beta\leq \alpha$ turns into $\jipair{a_1'}\leq_{\nu(L)}\jipair{b_1'}$, whence $(B_1,\nu(L))$ is not an antichain. Thus Lemma \ref{lemma:lfkbbZmbt} yields $\cd L\leq 6/64$, which is a contradiction.  This proves that $S=\set{u,a_1,a_2,a_3,v}$ is a cover-preserving sublattice of $L$.  

Next, we show that $[u,v]=S$. Suppose, for a contradiction, that $S\subset [u,v]$, and pick a minimal element $a_4\in[u,v]\setminus S$. Since $S$ is a cover-preserving sublattice, we have $a_4\parallel a_i$ for $i\in \set{1,2,3}$. Thus, for $i\in\set{1,2,3}$,  the coverings $u\prec a_i$ and $a_i\prec v$ yield that   $a_i\vee a_4=v$ and $a_i\wedge a_4=u$. Hence, $S':=S\cup\set{a_4}$ is a sublattice of $L$, and $S'\cong M_4$. The minimality of $a_4$ implies $u\prec a_4$. 
Thus $4+|\Jr L|\leq 4$ by Lemma \ref{lemma:beffnachn}, and so $|\Jr L|=0$. This contradicts $v\in\Jr L$, whence $[u,v]=S$. Therefore $[u,v]=S\cong{M_3}$.

Finally, let $d$ be a lower cover of $v$ such that $d\notin\set{a_1,a_2}$, and let $w$ and $T$ denote $a_1\wedge a_2\wedge d$ and the sublattice generated by $\set{a_1,a_2,d}$, respectively. Since $(d,w,T)$ and $(a_3,u,S)$ play the same role, $T=[w,u]\cong M_3$. Hence 
$d\in T=[w,v]=[a_1\wedge a_2,v]=[u,v]=S$. This implies $d=a_3$, whence $v$ has only three lower covers.  Consequently, whenever $H\cong H_2$, the statement of Lemma \ref{lemma:3lowcov} holds.

\logicbreak

Next, we deal with the case when $H\cong H_3$. Apart from indexing, $a_1\wedge a_3=a_2\wedge a_3=u<a_1\wedge a_2=:w$;  see the middle of Figure \ref{figmalna}. Pick elements $b_1,\dots, b_4\in L$ such that 
$w\prec b_1\leq a_1$, $w\prec b_2\leq a_2$, $u\prec b_3\leq a_3$, and $u\prec b_4\leq w$. Furthermore, select    
$b_1'\in\Foot w{b_1}$, $b_2'\in\Foot w{b_2}$, $b_3'\in\Foot u{b_3}$, and $b_4'\in\Foot u{b_4}$. 
Define $\alpha:=\con(u,b_3)$ and $\beta:=\con(u,b_4)$. 
Using Lemma \ref{lemma:oppedges} and \eqref{eq:mHprsrTvkF} three times,
we obtain for $i\in\set{1,2}$ that $\con(w,b_i)=\con(a_{3-i},v)=\alpha$ and $\con(a_3,v)=\beta$.
Hence, by \eqref{eq:hlMpcLlskLmzs}, $\con(\jipair{b_i'})=\alpha$ for $i\in\set{1,2,3}$. 
Note at this point that, for brevity, the application of  \eqref{eq:hlMpcLlskLmzs} will often be implicit.
Since $[a_3,v]\pdn [u,a_1]\supseteq[w,b_1]$, we have $\alpha=\con(w,b_1)\leq \con(a_3,v)=\beta$.
 Hence,  $\con(\jipair{b'_3})=\alpha\leq \beta=\con(\jipair{b'_4})$. That is,  $\jipair{b'_3} \leq_{\nu(L)} \jipair{b'_4}$.

Since $b_1\leq b_3$ would lead to $w\leq b_1=b_1\wedge b_3\leq a_1\wedge a_3= u$, while  $b_3\leq b_1$ to $b_3= b_1\wedge b_3\leq  a_1\wedge a_3=u$, we have $b_1\parallel b_3$. Similarly, $b_2\parallel b_3$. 
These facts, the obvious comparabilities and incomparabilities like $b_1\parallel b_2$ and $b_4\leq w$, and Lemma \ref{lemma:diffeet}
imply that 
\begin{equation}
|\set{\jipair{b_1'},\jipair{b_2'},\jipair{b_3'},\jipair{b_4'}}|=|\set{b_1',b_2',b_3',b_4'}|=4.
\label{eq:KtzrkBnhGdj}
\end{equation}
Combining \eqref{eq:hlKtngKGlmch},
the fact that $\con(\jipair{b_i'})=\alpha$ for $i\in\set{1,2,3}$,
 \eqref{eq:mrtzrsgrtkk}, and \eqref{eq:KtzrkBnhGdj}, it follows  that    $B_1:=B_0\setminus\set{\jipair{b'_1}, \jipair{b'_2}}$ is an at-most $(n-4)$-element congruence-determining edge set containing $\jipair{b_3'}$ and $\jipair{b_4'}$.
Thus $\jipair{b'_3} \leq_{\nu(L)} \jipair{b'_4}$ and Lemma \ref{lemma:lfkbbZmbt} imply $\cd L\leq 6/64$, a contradiction. Therefore the case $H\cong H_3$ cannot occur.

\logicbreak

Finally, we deal with the case when $H\cong H_4$; see Figure \ref{figkorte}. Our aim is to exclude this case.
Since $\Con{L\dual}=\Con L$, we can work in $K:=L\dual$, where the join-reducible elements are easier to handle. All order-theoretic symbols  such as  $\leq$, $\vee$, $\prec$, etc.\ will be understood in $K$. 
In particular, $v\prec a_i$ for $i\in\set{1,2,3}$, and $w_i:=a_i\vee a_2<a_1\vee a_3=:u$ for $i\in\set{1,3}$. 
The situation is shown in the right part of Figure~\ref{figmalna}. Clearly,  $|\Jr K|\geq 3$.
Take an element $b_1\in K$ such that $a_1\leq b_1\prec w_1$, and let $\alpha_i:=\con(v,a_i)$ for $i\in\set{1,2,3}$. By  \eqref{eq:mHprsrTvkF} and the dual of Lemma \ref{lemma:oppedges}, 
$\con(b_1,w_1)=\alpha_2$. Since $[v,a_3]\pup[a_1,u]\supseteq[b_1,w_1]$, we obtain that 
$\con(v,a_2)=\alpha_2=\con(b_1,w_1) \leq \con(v,a_3)$. Hence Lemma \ref{lemma:khVnMtTlkBr} implies 
$\cd L=\cd K\leq 6/64$, which is a contradiction excluding the case $H\cong H_4$.
Thus the proof of Lemma \ref{lemma:3lowcov} is complete.
\end{proof}

\begin{lemma}\label{lemma-3a-cha}
Let $\set{a_1,a_2,a_3}$ be a three-element antichain in a finite lattice $L$ with $\cd L>6/64$, and
assume that $\Core L\ncong M_3$. Then the sublattice generated by  $\set{a_1,a_2,a_3}$ is isomorphic to $N_{5,5}$ (depicted on the left of Figure \ref{figdio}).
\end{lemma}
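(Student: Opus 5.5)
We plan to study the sublattice $S$ generated by $\set{a_1,a_2,a_3}$ through the meet-subsemilattice and the join-subsemilattice it generates. Let $H_\wedge$ be the meet-subsemilattice generated by $\set{a_1,a_2,a_3}$, and let $H_\vee$ be the join-subsemilattice. By Lemma~\ref{lemma:mslhGntCh} and its dual, each of $H_\wedge$ and $H_\vee\dual$ is isomorphic to one of $H_1,\dots,H_4$ of Figure~\ref{figkorte}.

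\textbf{Step 1: excluding $H_1$.} The case $H_1$ is excluded on either side exactly as in the proof of Lemma~\ref{lemma:3lowcov}. It produces four meet-reducible (respectively, join-reducible) elements, and Lemma~\ref{lemma:nofourjred} then gives $\cd L\le 4/64$.

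\textbf{Step 2: excluding $H_2$.} Next we exclude $H_2$ on each side, using the hypothesis $\Core L\ncong M_3$. Suppose $H_\vee\dual\cong H_2$, so that $a_i\vee a_j=v$ for all $i\neq j$. Pick lower covers $c_i$ of $v$ with $a_i\le c_i$. These are pairwise distinct, since $c_i=c_j$ would give $v=a_i\vee a_j\le c_i$. So $v$ has at least three lower covers, and Lemma~\ref{lemma:3lowcov} yields an interval $[u,v]\cong M_3$ together with $\cd L\le 8/64$.

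It then remains to show that such an $M_3$-interval forces $\Core L\cong M_3$. The plan is to show that every element of $L$ is comparable with $u$ and with $v$. If some $x$ were incomparable with $u$ or with $v$, we would obtain either a fourth join-reducible element (so $|\Jr L|\ge 2$ besides $v$), to be combined with Lemma~\ref{lemma:beffnachn} ($3+|\Jr L|\le 4$), or an edge whose congruence is comparable with that of an $M_3$-edge in the configuration of Lemma~\ref{lemma:khVnMtTlkBr}.

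Lemma~\ref{lemma:beffnachn} already gives $|\Jr L|\le 1$, so $v$ is the only join-reducible element. Dually, applying Lemma~\ref{lemma:beffnachn} to $L\dual$, $u$ is the only meet-reducible element. Then every element outside $[u,v]$ has at most one lower and one upper cover. From this one shows that $L\setminus[u,v]$ consists of two chains, one below $u$ and one above $v$, so $\Core L\cong M_3$, which is a contradiction. The dual argument handles $H_\wedge\cong H_2$.

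\textbf{Step 3: the remaining combinations.} Now $H_\wedge$ and $H_\vee\dual$ are each $H_3$ or $H_4$, giving four combinations up to symmetry. We go through them, reusing the perspectivity arguments from the $H_3$ and $H_4$ cases of Lemma~\ref{lemma:3lowcov}. There, the $H_3$ and $H_4$ cases were excluded only because the top was a common cover $v$. Here the top of $H_\vee$ is merely $a_1\vee a_2\vee a_3$, so the covers must be chosen inside intervals, using $\Foot{\cdot}{\cdot}$ and Lemma~\ref{lemma:diffeet} to obtain distinct join-irreducible witnesses. In each wrong combination we intend to find two edges $(u_1,v_1),(u_2,v_2)$ with comparable congruences (via Lemma~\ref{lemma:oppedges} and Lemma~\ref{lemma:ggrprsprd}) and with $u_1\le u_2$, $v_1\nleq v_2$ or $v_1\le u_2$. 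Since $|\Jr L|\ge 3$ in these configurations, Lemma~\ref{lemma:khVnMtTlkBr} then gives $\cd L\le 6/64$.

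\textbf{Step 4: the surviving case.} The one surviving combination should be the one whose generated lattice is $N_{5,5}$. Here we also need to check that the meets and joins of mixed terms, such as $(a_1\wedge a_2)\vee a_3$, do not create new elements. For this we use Lemma~\ref{lemma:pdgbpLsT}(b): the generated set already contains three join-reducible and three meet-reducible elements, so $|\Jr L|=|\Mr L|=3$ and all reducibles lie in the set. The generated sublattice is then determined, and it is $N_{5,5}$.

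\textbf{Main obstacle.} I expect the main difficulty to be the case analysis in Step 3. Identifying, for each wrong combination, which two edges have comparable congruences and satisfy the positional hypothesis of Lemma~\ref{lemma:khVnMtTlkBr} requires "distant" feet. The fallback is Lemma~\ref{lemma:lfkbbZmbt} with a congruence-determining set of size at most $n-4$, built by removing one redundant pair from $\jipair{\Jir L}$.
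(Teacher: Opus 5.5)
Your Steps 1 and 2 are the paper's Cases 1 and 2. Steps 3 and 4, however, have genuine gaps.

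In Step 4, take the configuration you expect to survive: $c:=a_1\wedge a_2>u$ and $b:=a_1\vee a_2<v$, with the other pairwise meets equal to $u$ and the other pairwise joins equal to $v$. The two semilattices supply only $b,v\in\Jr L$ and $c,u\in\Mr L$. Indeed, $N_{5,5}$ itself has exactly two join-reducible and two meet-reducible elements. So your premise ("three of each") contradicts your own conclusion, and Lemma \ref{lemma:pdgbpLsT}\eqref{lemma:bpdgbpLsT} is not available.

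What actually has to be shown is $c\vee a_3=v$ (and dually $b\wedge a_3=u$), and this needs an idea your plan lacks. Suppose $x:=c\vee a_3<v$. Then $\set{a_1,a_2,x}$ is a new three-element antichain. The types $H_1$, $H_2$, $H_4$ must already be excluded for every three-element antichain, not just the given one. Hence the meet-subsemilattice of $\set{a_1,a_2,x}$ is of type $H_3$, which gives, up to symmetry, $y:=a_2\wedge x>c$. Only now do $x$ and $y$ supply the third join- and meet-reducible elements. Lemma \ref{lemma:pdgbpLsT}\eqref{lemma:bpdgbpLsT} then makes $S^+:=\set{u,c,a_1,a_2,a_3,b,v}\cup\set{x,y}$ a sublattice with $\cd L\le\cd{S^+}$. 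Finally, $[u,a_3]\pup[c,x]\supseteq[c,y]$ together with Lemma \ref{lemma:khVnMtTlkBr}, applied in $S^+$, gives the contradiction.

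Step 3 states an intention rather than an argument, and its one concrete claim, $|\Jr L|\ge 3$, is false in a configuration you never isolate. This is the case where both semilattices are of type $H_3$ but the distinguished pairs are misaligned, for example $a_1\vee a_2<v$ while $c:=a_2\wedge a_3>u$. Your Step 4 would wrongly lump it with the surviving case.

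Handling it takes two steps. First one proves $a_1\vee c=b$ and $b\wedge a_3=c$, again via an auxiliary antichain $\set{x,a_2,a_3}$. Then the generated sublattice $T_0=\set{u,c,a_1,a_2,a_3,b,v}$ has only two reducible elements on each side. So Lemma \ref{lemma:khVnMtTlkBr} cannot be used when $|\Jr L|=|\Mr L|=2$. The paper instead computes $\cd{T_0}=5/64$ and applies Lemma \ref{lemma:pdgbpLsT}\eqref{lemma:apdgbpLsT}. When $|\Jr L|=3$, it adjoins the at most two missing reducible elements and checks which of $(u,a_1)$, $(u,c)$, $(c,a_3)$ stays an edge.

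More generally, what replaces the covering relations of Lemma \ref{lemma:3lowcov} is not feet in $L$. It is passing, via Lemma \ref{lemma:pdgbpLsT}, to a small sublattice containing $\Jr L\cup\Mr L$, in which the relevant pairs are genuine edges and whose congruence density bounds $\cd L$. Also, several wrong cases, such as type $H_4$ on both sides, are not killed by perspectivities at all. They fall to exhibiting a fourth join-reducible element, such as the join of the two pairwise meets lying strictly above $u$.
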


\begin{proof}   Let $n:=|L|$, $u:=a_1\wedge a_2\wedge a_3$, and $v:=a_1\vee a_2\vee a_3$. 
Denote by $S^\wedge$ and $S^\vee$ the meet-subsemilattice and the join-sub\-semilattice generated by $\set{a_1,a_2,a_3}$. By Lemma \ref{lemma:mslhGntCh} and its dual statement, $S^\wedge \in\set{H_1,\dots, H_4}$ and 
$S^\vee\in \set{H_1\dual,\dots, H_4\dual}$ (both understood up to isomorphism).  
There are several cases to consider.

\logicbreak
Case 1: We assume that $S^\wedge\cong H_1$ or  $S^\vee\cong H_1\dual$. 
Then $|\Mr L|\geq 4$ or $|\Jr L|\geq 4$, which implies  $\cd L\leq 4/64$ by
Lemma \ref{lemma:nofourjred}, contradicting the assumption $\cd L>6/64$. Therefore, Case 1 cannot occur; that is, $S^\wedge\ncong H_1$ and  $S^\vee\ncong H_1\dual$.

\logicbreak
Case 2:  $S^\wedge\cong H_2$ or  $S^\vee\cong H_2\dual$. 
Then, by duality, we may assume that  $S^\vee\cong H_2\dual$. We have that $v:=a_1\vee a_2=a_1\vee a_3=a_2\vee a_3$.
For $i\in\set {1,2,3}$, pick an element $b_i\in L$ such that $a_i\leq b_i\prec v$. These elements are distinct lower covers of $v$; indeed, if we had, say, $b_1=b_2$, then $v=a_1\vee a_2\leq b_1\vee b_2=b_1<v$ would be a contradiction. 
Let $u_0:=b_1\wedge b_2\wedge b_3$. By Lemma \ref{lemma:3lowcov}, $[u_0,v]\cong M_3$.   In particular, $u_0\prec b_i\prec v$ for $i\in\set{1,2,3}$. 
Hence, it follows from Lemma \ref{lemma:beffnachn} and its dual that $|\Jr L|\leq 1$ and $|\Mr L|\leq 1$. Combining this with $u_0\in\Mr L$ and $v\in \Jr L$, we have that $\Mr L=\set{u_0}$ and $\Jr L=\set v$.
If there were an element $x\in L$ with $x\parallel u_0$ or $x\parallel v$, then $x\wedge u_0\in \Mr L\setminus \set{u_0}=\emptyset$ or  $x\vee v\in \Jr L\setminus \set v=\emptyset$, respectively, would follow, which is impossible. Hence no such $x$ exists. Therefore, $u_0,v\in\Nar L$ and $L=\idl {u_0}\gsum[u_0,v]\gsum \fil v$. 
This glued sum decomposition, together with $\Mr L=\set{u_0}$,  $\Jr L=\set v$, and the already established $[u_0,v]\cong M_3$ implies that $\idl{u_0}$ and $\fil v$ are chains and, moreover, that $\Core L=[u_0,v]\cong M_3$.  This contradicts our assumption and excludes Case 2.

\logicbreak
Case 3: $S^ \wedge\cong H_4$ or $S^\vee\cong H_4\dual$. Duality allows us to assume $S^\vee\cong H_4\dual$.
Up to a permutation of the subscripts, 
$b_1:=a_1\vee a_2<a_1\vee a_3=:v$,  $b_3:=a_2\vee a_3<v$, and $v=b_1\vee b_3$. 
We know from Cases 1 and 2 that $S^\wedge \ncong H_1$ and $S^\wedge \ncong H_2$. Thus, by Lemma \ref{lemma:mslhGntCh}, $S^\wedge \cong H_3$ or $S^\wedge \cong H_4$.

First, to derive a contradiction, we assume that $S^\wedge \cong H_3$. 
Let $c$ be the unique element of $S^\wedge\setminus\set{u,a_1,a_2,a_3}$, where $u$ still denotes the smallest element of $S^\wedge$. 
Apart from symmetry, $c$ is either $a_1\wedge a_2$ or $a_1\wedge a_3$. 

Suppose, for a contradiction, that $c=a_1\wedge a_3$; see the lattice diagram in the middle of Figure \ref{figdio}. Since  $c\parallel a_2$ (in $S^\wedge$, and hence also in $L$),   $c\vee a_2\in \Jr L$. Given that $c\vee a_2\leq a_1\vee a_2=b_1$,  
$c\vee a_2\leq a_3\vee a_2=b_3$, and $b_1\parallel b_3$,  we obtain that $c\vee a_2\notin \set{b_1,b_3,v}$. Hence $\set{c\vee a_2, b_1, b_3,v}$ is a four-element subset of $\Jr L$, whence 
Lemma \ref{lemma:nofourjred} implies $\cd L\leq 4/64$,  a contradiction. Thus  $c\neq a_1\wedge a_3$. 

Therefore $c=a_1\wedge a_2$, as shown on the right of Figure \ref{figdio}. Then $b_2:=b_1\wedge b_3\geq a_2$, whence $\set{b_2,c,u}$ is a 3-element subset of $\Mr L$. Therefore, since $\set{b_1,b_3,v}$ is a 3-element subset of $\Jr L$, Lemma \ref{lemma:pdgbpLsT}\eqref{lemma:bpdgbpLsT} 
yields that $S:=\{u,v,a_1,a_2,a_3,b_1,b_2,b_3,c\}$ is a sublattice of $L$ and $\cd L\leq \cd S$. Note 
that---depending on whether $a_2$ equals $b_2$ or not---$|S|$ is $8$ or $9$.
We claim that
\begin{equation}
S\text{ is correctly depicted on the right of Figure \ref{figdio}.}
\label{eq:Scorrdep}
\end{equation}
Since $S$ being a sublattice of $L$ does not by itself imply \eqref{eq:Scorrdep}, we provide some details here. (Later, in analogous situations, we will usually omit such details and simply say that 
the argument would be similar to the one proving \eqref{eq:Scorrdep}.) 
The \quot{$\Mr L\subseteq S$, $\Jr L\subseteq S$, and  $|\Mr L|=|\Jr L|=3$ part} of  Lemma \ref{lemma:pdgbpLsT}\eqref{lemma:bpdgbpLsT}, together with our assumption on $S^\vee$ and $S^\wedge$, suffices  to verify \eqref{eq:Scorrdep} as follows.  
From  $S^\vee$, we know that $b_1\parallel a_3$. Hence $b_1\wedge a_3\in \Mr L=\set{b_2,c,u}$.  Since $c\nleq a_3$ (even within $S^\wedge$) and $b_2\nleq a_3$ (as otherwise $c\leq a_2\leq b_2\leq a_3$ would give $c\leq a_3$), neither $c$ nor  $b_2$ is $b_1\wedge a_3$. Thus $b_1\wedge a_3\in \set{b_2,c,u}\setminus \set{c,b_2}$, whence $b_1\wedge a_3=u$. 
Similarly, since  $c\parallel a_3$ implies that $c\vee a_3\in \Jr  L=\set{b_1,b_3,v}$, only $b_3$ and $v$ among the elements of $\Jr L$ are common upper bounds of $c$ and $a_3$,   and since moreover $b_3\leq v$, we obtain $c\vee a_3=b_3$. Finally, since the lattice operations are isotone (in other words, order-preserving), the already established equalities combined with our knowledge of $S^\vee$ and $S^\wedge$ yield \eqref{eq:Scorrdep}.

Working in $S$, $[u,a_3]\pup [c,b_3]\supseteq [c,a_2]$ yields $\con_S(c,a_2)\leq \con_S(u,a_3)$. Thus Lemma \ref{lemma:khVnMtTlkBr} together with \eqref{eq:Scorrdep} implies $\cd L\leq 6/64$, which is a contradiction. Consequently, $S^\wedge\ncong H_3$.

Second, in pursuit of a contradiction, assume  that $S^\wedge \cong H_4$. For a permutation $p$ of $\set{1,2,3}$, $a_{p(1)}\wedge a_{p(2)}\in S^\wedge$ and $a_{p(2)}\wedge a_{p(3)}\in S^\wedge$ are incomparable elements. Thus, 
 $(a_{p(1)}\wedge a_{p(2)})\vee (a_{p(2)}\wedge a_{p(3)})$---which is in $\idl {a_{p(2)}}$---and $b_1, b_3, v$---which are out of $\idl {a_{p(2)}}$---are four different elements of $\Jr L$. Hence, Lemma \ref{lemma:nofourjred} implies $\cd L\leq 4/64$, which contradicts the assumption $\cd L>6/64$. Consequently, $S^\wedge \ncong H_4$. Since every possibility for $S^\wedge$ has led to a contradiction, we conclude that Case 3 cannot occur.

\logicbreak
Next, having excluded Cases 1, 2, and 3 for \emph{every} three-element antichain, it follows that  
$S^\wedge \cong H_3$ and $S^\vee\cong H_3\dual$. 
Without loss of generality (after permuting the subscripts, if necessary),  we may assume that $b:=a_1\vee a_2<v$, and
\begin{equation}
\text{either }a_2\wedge a_3>u\text{ or }a_1\wedge a_2>u.
\label{eq:fFjmlVgysBck}
\end{equation}


\begin{figure}[ht] 
\centerline{ \includegraphics[width=\figwidthcoeff\textwidth]{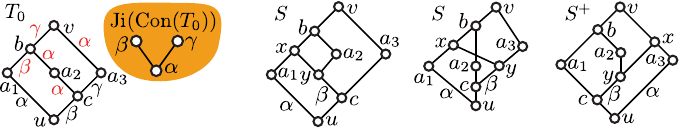}} 
\caption{Illustrations of the $S^\wedge \cong H_3$ and $S^\vee\cong H_3\dual$ case}\label{fignaspolya}
\end{figure}

To exclude the first possibility in \eqref{eq:fFjmlVgysBck} by contradiction, let us suppose that $c:=a_2\wedge a_3>u$; see the left part of Figure \ref{fignaspolya}. 
We claim that, under this assumption,
\begin{equation}
a_1\vee c= b\text{ \ and \ }b\wedge a_3=c.
\label{eq:tdVntXhRtskHg}
\end{equation}
By duality, it suffices to deal with the first equality. Let $x:=a_1\vee c$. To obtain a contradiction, suppose $x<b$; see the two middle lattice diagrams of Figure \ref{fignaspolya}.
Since  $x\leq a_2$ would lead to $a_1\leq a_2$, 
$x\leq a_3$ to $a_1\leq a_3$, $a_2\leq x$ to $b\leq x$, and $a_3\leq x$ to $v\leq x$, we obtain that  $\set{x,a_2,a_3}$ is a 3-element antichain. 
Since  Cases 1, 2, 3 (for every 3-element antichain) have already been excluded, the meet-semilattice generated by $\set{x,a_2,a_3}$ is isomorphic to $H_3$. 
Thus, using  
$a_2\wedge a_3=c=x\wedge c=  x\wedge a_2\wedge a_3$, 
it follows that exactly one of $x\wedge a_2$ and $x\wedge a_3$ is strictly larger than $c$, while the other equals $c$. 
The corresponding two subcases are drawn in 
the two middle lattice diagrams
of Figure \ref{fignaspolya}.
The treatment of these two subcases is the same. Denote by $y$ the larger of $x\wedge a_2$ and $x\wedge a_3$; the other one is $c$. 
Since $a_1\nleq a_2$ and $a_1\nleq a_3$ , we have that $a_1\nleq y$.
Discarding its previous meaning, define $S:=\{u,v,a_1,a_2,a_3,b,c, x,y\}$, which is a subposet of $L$. 
Similarly to the argument used to verify \eqref{eq:Scorrdep}, we obtain from Lemma \ref{lemma:pdgbpLsT}\eqref{lemma:bpdgbpLsT} that $S$ is a sublattice of $L$, that it is correctly depicted, and that $\cd L\leq \cd S$. 
In $S$, let $\alpha:=\con_S(u,a_1)$ and $\beta:=\con_S(c,y)$. 
Again in $S$, we have that $[u,a_1]\pup[c,x]\supseteq [c,y]$, whence $\beta\leq \alpha$.
Thus, Lemma \ref{lemma:khVnMtTlkBr} yields that $\cd S\leq 6/64$, whence  
$\cd L\leq \cd S\leq 6/64$ is a contradiction proving \eqref{eq:tdVntXhRtskHg}.

It follows from  $S^\wedge \cong H_3$,  $S^\vee\cong H_3\dual$, and \eqref{eq:tdVntXhRtskHg} that $T_0:=\set{a_1,a_2,a_3,u,v,b,c}$ is a sublattice of $L$, and that $T_0$ is isomorphic to the lattice on the left in Figure \ref{fignaspolya}.
Using Lemma \ref{lemma:ggrprsprd} and \eqref{eq:knRlhKrhkn}, it is straightforward to see that $\Jir{\Con{T_0}}$ is correctly drawn in Figure \ref{fignaspolya}, $|\Con {T_0}|=5$, and $\cd{T_0}=5/64$.
Clearly, $\set{b,v}=\Jr{T_0}\subseteq \Jr L$ and 
$\set{c,u}=\Mr{T_0}\subseteq \Mr L$. 
These inclusions (which will also be used in the next sentence) and Lemma \ref{lemma:nofourjred} yield that $|\Jr L|$ and $|\Mr L|$ belong to $\set{2,3}$. 
If $|\Jr L|=|\Mr L|=2$, then $\Jr L\cup \Mr L\subseteq T_0$, whence Lemma \ref{lemma:pdgbpLsT}\eqref{lemma:apdgbpLsT} together with $\cd{T_0}=5/64$ yields that $\cd L\leq \cd{T_0}=5/64$, a contradiction.

Therefore, we may assume that one of 
$|\Jr L|$ and $|\Mr L|$ equals $3$. In fact, duality allows us to assume that $|\Jr L|=3$. Let $T:=T_0\cup \Jr L\cup \Mr L$. Pick an element $z\in T$ such that $c\prec_T z\leq a_2$, and observe that 
\begin{equation}
z\parallel a_1\text{ and }z\parallel a_3.
\label{eq:trRnkszBrfJ}
\end{equation}
Indeed,  since $z\leq a_1$ would lead to $z\leq a_1\wedge a_2=u$, $a_1\leq z$ to $a_1\leq a_2$, 
$z\leq a_3$ to $z\leq a_2\wedge a_3=c$,  and $a_3\leq z$ to $a_3\leq a_2$, we infer the validity of  \eqref{eq:trRnkszBrfJ}.

By Lemma \ref{lemma:pdgbpLsT}\eqref{lemma:apdgbpLsT},  $T$ is a sublattice of $L$ and $\cd T\leq \cd L$. 
Furthermore, $|\Jr L|=3$, $|\Mr L|\leq 3$, $\set{b,v}\subseteq \Jr L\cap T_0$, and $\set{c,u}\subseteq \Mr L\cap T_0$ imply that $|T\setminus T_0|\leq 2$. This inequality yields that at least one of the edges $(u,a_1)$, $(u,c)$, $(c,a_3)\in\Edge{T_0}$ remains an edge of $T$. 
If $(u,a_1)\in\Edge T$, then (using that $T_0$ is a sublattice of $T$, 
while considering the intervals in $T$)
$[u,a_1]\pup[c,b]\supseteq [c,z]$ gives $\con_T(c,z)\leq \con_T(u,a_1)$, whence Lemma \ref{lemma:khVnMtTlkBr} together with  \eqref{eq:trRnkszBrfJ} implies $\cd L\leq \cd T\leq 6/64$, a contradiction. 
If $(u,c)\in \Edge T$, then $[u,c]\pup [a_1,b]\pdn [u,a_2]\supseteq [c,z]$ implies $\con_T(c,z)\leq \con_T(u,c)$, and Lemma \ref{lemma:khVnMtTlkBr} gives the same contradiction as above.  
Finally, if $(c,a_3)\in \Edge T$, then  
\begin{equation*}
[c,a_3]\pup [a_2,v]\supseteq[a_2,b]\pdn[u,a_1]\pup[c,b]\supseteq [c,z]
\end{equation*}
implies $\con_T(c,z)\leq\con_T(c,a_3)$.  Thus Lemma \ref{lemma:khVnMtTlkBr} together with  \eqref{eq:trRnkszBrfJ}  gives  $\cd L\leq\cd T\leq 6/64$, a contradiction again. Consequently, we have excluded the first possibility, $a_2\wedge a_3>u$, in \eqref{eq:fFjmlVgysBck}.

Therefore, $a_1\wedge a_2>u$, which is the second possibility in \eqref{eq:fFjmlVgysBck}. 
Forgetting its meaning in the previous paragraph, let $c$ stand for $a_1\wedge a_2$. 
Reusing the symbol $S$, we claim that
\begin{equation}
S :=\set{a_1,a_2,a_3,b,c,u,v}\text{ is a sublattice of }L\text{ and that }S\cong N_{5,5}.
\label{eq:vmknVbdMxr}
\end{equation}
We already know $S^\wedge \cong H_3$ with $a_1\wedge a_2=c>u$ and $S^\vee \cong H_3\dual$ with $a_1\vee a_2=b<v$. 
Thus, by duality, to prove \eqref{eq:vmknVbdMxr}, it suffices to show that $c\vee a_3=v$.
Suppose the contrary, and denote $c\vee a_3$ by $x$.
Then $a_3<x<v$; see the right part of Figure \ref{fignaspolya}. 
At this point, $a_1$ and $a_2$ play symmetric roles. 
For $i\in\set{1,2}$,  the inequality $x\leq a_i$ would imply $a_3\leq a_i$, while $a_i\leq x$ would yield 
$v=a_i\vee a_3\leq x$; that is, we would get contradictions. 
Hence, $x\parallel a_1$ and $x\parallel a_2$, whence $\set{a_1,a_2,x}$ is a three-element antichain. 
Since Cases 1, 2, and 3 have been excluded for every 3-element antichain, the meet-subsemilattice generated by $\set{a_1,a_2,x}$ is isomorphic to $H_3$. 
Therefore, using $a_1\wedge a_2=c\leq x$, exactly one of the meets $a_1\wedge x$ and $a_2\wedge x$ equals $c$. 
By symmetry, we may assume that $a_1\wedge x=c$ and $y:=a_2\wedge x>c$; see  the right of Figure \ref{fignaspolya}. 
Similarly to its use when proving \eqref{eq:Scorrdep}, 
Lemma \ref{lemma:pdgbpLsT}\eqref{lemma:bpdgbpLsT} implies that $S^+:=S\cup \{x,y\}$ is a sublattice of $L$, that it is correctly depicted, and that $\cd L\leq \cd {S^+}$. Using $a_3\nleq a_2$ or the figure, we obtain $a_3\nleq y$. 
Since  $[u,a_3]\pup [c,x]\supseteq [c,y]$ holds in $S^+$, we have $\con_{S^+}(c,y)=:\beta\leq\alpha:=\con_{S^+}(u,a_3)$. Hence Lemma \ref{lemma:khVnMtTlkBr} implies $\cd {S^+}\leq 6/64$, whence $\cd L\leq \cd {S^+}\leq 6/64$ is a contradiction. This proves \eqref{eq:vmknVbdMxr}.

Finally, since $S$ is generated by the antichain $\set{a_1,a_2,a_3}$, 
\eqref{eq:vmknVbdMxr} completes the proof of 
Lemma \ref{lemma-3a-cha}.
\end{proof}

\begin{lemma}\label{lemma:sJqsM} If $\cd L>6/64$ holds for a finite lattice $L$, then $|\Jr L|=|\Mr L|$. 
\end{lemma}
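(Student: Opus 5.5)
We plan to split into two cases according to the width of $L$. By Lemma \ref{lemma:nofourjred}, both $|\Jr L|$ and $|\Mr L|$ lie in $\set{0,1,2,3}$. By duality it suffices to rule out $|\Jr L|>|\Mr L|$.

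\emph{First case: $L$ has a three-element antichain.} If $\Core L\cong M_3$, then $|\Jr L|=|\Mr L|=1$. Indeed, the canonical glued sum decomposition adds only chain summands around $M_3$, and these summands create no new reducible elements. Otherwise, Lemma \ref{lemma-3a-cha} gives a sublattice $S\cong N_{5,5}$ generated by an antichain $\set{a_1,a_2,a_3}$. This $S$ contributes two join-reducible elements ($b$ and $v$) and two meet-reducible elements ($c$ and $u$) of $L$. Suppose, say, $|\Jr L|=3$ while $|\Mr L|=2$. Then $\Mr L=\set{c,u}\subseteq S$. Let $T:=S\cup\Jr L$, which by Lemma \ref{lemma:pdgbpLsT}\eqref{lemma:apdgbpLsT} is a sublattice with $\cd L\le\cd T$. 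Here $T=S\cup\set{w}$, where $w$ is join-reducible in $L$. We then run a small case analysis on the possible positions of $w$ relative to $N_{5,5}$. In each case we aim to find two edges with comparable congruences in the configuration of Lemma \ref{lemma:khVnMtTlkBr}, using perspectivities as in the proof of Lemma \ref{lemma-3a-cha}. This yields $\cd T\le 6/64$, a contradiction. Alternatively, $T$ might have a three-element antichain generating something other than $N_{5,5}$, contradicting Lemma \ref{lemma-3a-cha}.

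\emph{Second case: $L$ has width at most $2$.} Then every element has at most two lower and at most two upper covers. We plan a counting argument through a planar (width-two) structure. In a lattice of width $2$, each $x\parallel y$ produces the pair $x\wedge y\in\Mr L$ and $x\vee y\in\Jr L$. We would try to show that the map sending each maximal "non-narrow" block (a summand of the canonical glued sum decomposition) to its top and bottom pairs join-reducibles with meet-reducibles. More concretely, within each non-chain summand $[u_{i-1},u_i]$ of width $2$, the two maximal chains between $u_{i-1}$ and $u_i$ are joined by "rungs". The meet-reducibles and join-reducibles correspond to the branching and merging points of these chains. If the counts differ, some rung structure must force a four-element subset, or a pair of edges as in Lemma \ref{lemma:khVnMtTlkBr}. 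For instance, $|\Jr L|=2$ with $|\Mr L|=1$ gives a sublattice like $R_6$, with $\cd{R_6}=6/64$, which is a dismantlable extension issue to be treated via \eqref{eq:dsmntlExtsm}.

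The main obstacle is this width-$2$ case. We would first try to show that when $|\Jr L|\neq|\Mr L|$, $L$ is a dismantlable extension of a small sublattice $S\supseteq \Jr L\cup\Mr L$ of the $R_6$ type (or of the type obtained by gluing it with $B_4$ or $N_5$). We would then bound $\cd S\le 6/64$ directly, either by Lemma \ref{lemma:lfkbbZmbt} or by Lemma \ref{lemma:khVnMtTlkBr}, and conclude with Lemma \ref{lemma:pdgbpLsT}\eqref{lemma:apdgbpLsT}.
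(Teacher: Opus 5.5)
Your width-at-most-$2$ case is a list of intentions rather than a proof, and it overlooks the argument that settles it at once. You already observe that every element then has at most two lower and at most two upper covers. Now count edges by their top elements: $0$ is the top of no edge, each element of $\Jir L$ is the top of exactly one, and each element of $\Jr L$ of exactly two. By \eqref{eq:zsszmrdLjVntWz} this gives $|\Edge L|=|\Jir L|+2|\Jr L|=|L|-1+|\Jr L|$, and counting by bottom elements gives $|\Edge L|=|L|-1+|\Mr L|$. Hence $|\Jr L|=|\Mr L|$ for every such lattice, without using $\cd L$ at all. There is also nothing to reduce to $R_6$: since $|\Jr{R_6}|=2\neq 1=|\Mr{R_6}|$, the same count shows that $R_6$ has an element with at least three upper or at least three lower covers. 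So $R_6$ has width $3$ and cannot occur in this case. This edge count is the paper's proof, except that the paper splits on covering degrees rather than on width. If some $v$ has three lower covers (dually, upper covers), they form an antichain whose pairwise joins all equal $v$, so the sublattice they generate is not $N_{5,5}$, and Lemma \ref{lemma-3a-cha} forces $\Core L\cong M_3$. Otherwise the edge count applies, including to lattices that contain $N_{5,5}$.

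That case split also makes your first case unnecessary, which matters because your plan for it fails as stated. Passing to $T=S\cup\Jr L$ discards exactly the information you need. If the third join-reducible element $w$ lies above $v$, then $T\cong N_{5,5}\gsum\chain 2$. If $w$ already belongs to $S$, then $T=S\cong N_{5,5}$; nothing excludes a priori that, say, $a_1$ or $a_3$ is join-reducible in $L$ via elements outside $S$. In both situations $\cd T=7/64>6/64$, so no computation inside $T$ can yield $\cd T\leq 6/64$. Relatedly, $w$ need not be join-reducible in $T$, so the hypothesis $|\Jr T|\geq 3$ of Lemma \ref{lemma:khVnMtTlkBr} may fail. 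These configurations can only be excluded using elements outside $T$. For $w>v$, for example, a lower cover $y$ of $w$ with $y\parallel v$ produces a three-element antichain whose pairwise meets coincide, contradicting Lemma \ref{lemma-3a-cha}. The edge count does this bookkeeping for you automatically.
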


\begin{proof} 
First, assume that $L$ has an element $v$ with more than two lower covers or more than two covers (i.e., upper covers).
By duality, we may assume that $a_1$, $a_2$, and $a_3$ are three distinct lower covers of $v$. Since the sublattice generated by the antichain $\set{a_1,a_2,a_3}$ is clearly not isomorphic to $N_{5,5}$, 
the conclusion of  Lemma \ref{lemma-3a-cha} fails here, whence its hypothesis cannot hold. Therefore $\Core L\cong M_3$, and $|\Jr L|=1=|\Mr L|$, as required.

Second, assume that each element has at most two lower covers and at most two covers.   We count the number $|\Edge L|$ of edges according to their top elements: 
$0$ is the top of no edge; 
each of the $|\Jr L|$ join-reducible elements is the top of two edges; 
and each of the $|\Jir L|=|L|-1-|\Jr L|$ join-irreducible elements is the top of a single edge. Thus 
$|\Edge L|=2\cdot |\Jr L| + (|L|-1-|\Jr L|)=|L|-1+|\Jr L|$. By duality, $|\Edge L|=|L|-1+|\Mr L|$. Comparing the two expressions for $|\Edge L|$,  we obtain the required equality $|\Jr L|=|\Mr L|$, completing the proof.
\end{proof}

\begin{lemma}\label{lemma:nofour}
If $L$ is a finite lattice with $\cd L>6/64$, then $L$ has no four-element antichain.
\end{lemma}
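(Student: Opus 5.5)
We argue by contradiction: suppose $\cd L>6/64$ and $\set{a_1,a_2,a_3,a_4}$ is a four-element antichain in $L$. The plan is to exploit Lemma~\ref{lemma-3a-cha}, which controls every three-element sub-antichain, and then to count reducible elements so that Lemma~\ref{lemma:nofourjred} gives $\cd L\leq 4/64$.

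First we dispose of the case $\Core L\cong M_3$. By \eqref{eq:cdLcdCorL}, $L$ is then a glued sum of chains and a single copy of $M_3$, so it has no four-element antichain at all. Hence we may assume $\Core L\ncong M_3$. Then Lemma~\ref{lemma-3a-cha} applies to each of the four three-element subsets $\set{a_i,a_j,a_k}$: the sublattice each one generates is isomorphic to $N_{5,5}$. In particular, its meet-semilattice is $H_3$ and its join-semilattice is $H_3\dual$. Concretely, among the three pairwise meets exactly one is strictly larger than the common meet, and dually for joins.

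Next we collect join-reducible elements. Every join $a_i\vee a_j$ with $i\neq j$ lies in $\Jr L$, and so does $a_1\vee a_2\vee a_3\vee a_4$. Lemma~\ref{lemma:nofourjred} forces $|\Jr L|\leq 3$, so these at most seven elements must collapse to at most three distinct values. The $H_3\dual$ structure of each triple says that for $\set{i,j,k}$ exactly one pair has a join strictly below the other two, and the other two coincide with $a_i\vee a_j\vee a_k$. Call a pair $\set{i,j}$ \emph{low} if $a_i\vee a_j$ is strictly below the join of some triple containing it. Every triple contains exactly one low pair; there are four triples, and each pair lies in exactly two triples. So there are exactly two low pairs, and they are disjoint (two low pairs sharing an index would lie in a common triple), say $\set{1,2}$ and $\set{3,4}$. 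Consequently $a_1\vee a_3=a_1\vee a_4=a_2\vee a_3=a_2\vee a_4$, and each of these equals the join of every triple, hence equals $t:=\bigvee a_i$. Then $a_1\vee a_2<t$ and $a_3\vee a_4<t$. If $a_1\vee a_2=a_3\vee a_4$, that common element would dominate all $a_i$ and thus equal $t$, a contradiction. Therefore $a_1\vee a_2$, $a_3\vee a_4$, and $t$ are three distinct join-reducible elements, and there is no contradiction yet.

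The main obstacle is obtaining a fourth join-reducible element, or else a violation of comparability-type constraints. We would first apply the dual counting to meets, which gives the same structure with disjoint low pairs for meets. There are two cases.

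\emph{The meet pairing coincides with the join pairing.} Here $c:=a_1\wedge a_2>u$, and the triple $\set{a_1,a_2,a_3}$ is an $N_{5,5}$ whose join structure has $a_1\vee a_2$ low and whose meet structure has $a_1\wedge a_2$ low. In the proof of Lemma~\ref{lemma-3a-cha}, the admissible $N_{5,5}$ configuration is exactly this one (with $c=a_1\wedge a_2$ and $b=a_1\vee a_2$), so the case is consistent. We then look for a fourth element of $\Jr L$: the element $(a_1\wedge a_2)\vee a_3$ equals $t$ by \eqref{eq:vmknVbdMxr}. So we turn instead to elements such as $(a_3\wedge a_4)\vee a_1$, or $(a_1\wedge a_2)\vee(a_3\wedge a_4)$; the latter is join-reducible, lies below both $a_1\vee a_2$ and $a_3\vee a_4$, and hence is distinct from all three known join-reducible elements. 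That gives four elements of $\Jr L$, and Lemma~\ref{lemma:nofourjred} yields the contradiction. For this step we must check that $a_1\wedge a_2$ and $a_3\wedge a_4$ are incomparable, which follows since, for example, $a_1\wedge a_2\leq a_3\wedge a_4$ would give $a_1\wedge a_2\leq a_1\wedge a_3=u$, contradicting $c>u$.

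\emph{The meet pairing differs from the join pairing}, say meets low on $\set{1,3}$ and $\set{2,4}$. Then the triple $\set{1,2,3}$ has its low meet pair $\set{1,3}$ different from its low join pair $\set{1,2}$. This is precisely the excluded configuration of \eqref{eq:fFjmlVgysBck}, so it contradicts $S\cong N_{5,5}$ as established in Lemma~\ref{lemma-3a-cha}, where $c$ and $b$ must come from the same pair.
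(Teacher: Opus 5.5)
Your proposal has two genuine gaps.

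\textbf{The double count is invalid.} Lemma~\ref{lemma-3a-cha} gives, in each triple, exactly one pair whose join lies strictly below the join of that triple. But a pair belongs to two triples, and it can be low in one of them and not in the other. Your count ``four triples, each pair in two triples, so exactly two low pairs'' silently assumes that lowness is the same in both triples.

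\emph{Concrete configuration your argument misses.} Take $v_1<v_2<v_3$ with $a_1\vee a_2=v_1$, $a_1\vee a_3=a_2\vee a_3=v_2$, and $a_i\vee a_4=v_3$ for $i\le 3$. This satisfies the $H_3\dual$ requirement in every triple and uses only three join-reducible elements. Yet $\set{1,2}$, $\set{1,3}$, $\set{2,3}$ are all low, so the triple $\set{1,2,3}$ contains three low pairs.

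This is the paper's Case~($\beta$), and your argument never reaches it. No counting of reducible elements can exclude it. The ten-element lattice $S$ formed by the $a_i$, the $v_i$, and the dual meets $u_i$ is itself a lattice with a four-element antichain and exactly three join-reducible and three meet-reducible elements. The paper instead gets $\cd L\le\cd S$ from Lemma~\ref{lemma:pdgbpLsT}\eqref{lemma:bpdgbpLsT}. It then applies Lemma~\ref{lemma:khVnMtTlkBr} to $[u_3,a_4]\pup[u_2,v_3]\supseteq[u_2,a_3]$.

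\textbf{Your fourth join-reducible element does not exist in the disjoint-pair case.} This case, with matching meets, is the paper's Case~($\gamma$). Write $u_1:=a_1\wedge a_2$, $u_2:=a_3\wedge a_4$, and $t:=\bigvee a_i$.
\begin{enumerate}
\item Applying Lemma~\ref{lemma-3a-cha} to $\set{a_1,a_2,a_4}$ gives $u_1\vee a_4=t$.
\item So $u_1\le a_3\vee a_4$ would force $t\le a_3\vee a_4$, which is impossible. Symmetrically, $u_2\nleq a_1\vee a_2$.
\item Hence $u_1\vee u_2$ lies below neither $a_1\vee a_2$ nor $a_3\vee a_4$. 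Since $|\Jr L|\le 3$, it must equal $t$.
\item Likewise $(a_3\wedge a_4)\vee a_1=t$.
\end{enumerate}
Your claim that $(a_1\wedge a_2)\vee(a_3\wedge a_4)$ lies below both pairwise joins is therefore false. Again the lattice made of $u_3<u_1,u_2$, the two squares $\set{u_1,a_1,a_2,v_1}$ and $\set{u_2,a_3,a_4,v_2}$, and $v_1,v_2<v_3$ has exactly three join-reducible and three meet-reducible elements. A congruence argument is unavoidable here.

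The paper uses the inequalities $u_i\nleq v_{3-i}$ just derived to show that $S$ is a sublattice with $\cd L\le\cd S$. It then applies Lemma~\ref{lemma:khVnMtTlkBr} to $[u_3,u_2]\pup[u_1,v_3]\supseteq[u_1,a_2]$.

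\textbf{What does work.} Your remaining steps are fine: the $M_3$ reduction, and the exclusion of $a_1\vee a_2=a_3\vee a_4$ (the paper's Case~($\alpha$)). The ``meet pairing differs'' subcase is also fine but vacuous: the $N_{5,5}$ conclusion forces the low meet pair and the low join pair of each triple to coincide.
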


\begin{proof} 
Suppose the contrary, and let $\set{a_1,a_2,a_3,a_4}$ be a four-element antichain in $L$. Clearly, $\Core L\ncong M_3$.  Hence, by Lemma \ref{lemma-3a-cha}, for every three-element  $X\subseteq L$,
\begin{equation}
\text{if }X\subseteq \set{a_1,\dots,a_4},\text{ then }X\text{ generates a sublattice isomorphic to }N_{5,5}.
\label{eq:brTTvRfhnZk}
\end{equation}
Let $B:=\set{a_i\vee a_j: 1\leq i<j\leq 4}$.  
Lemma \ref{lemma:nofourjred}, combined with $B\subseteq \Jr L$ and \eqref{eq:brTTvRfhnZk}, implies that $2\leq |B|\leq 3$. Moreover, \eqref{eq:brTTvRfhnZk} shows that each element of $B$ is comparable to at least one other element of $B$. Hence there are only four possibilities for the poset $(B;\leq)$. These will be discussed in  Cases $(\alpha)$, \dots, $(\delta)$.

\logicbreak
Case ($\alpha$): We assume that $B$ is a two-element chain. Since \eqref{eq:brTTvRfhnZk} holds for $\set{a_1,a_2,a_3}$, we may assume by symmetry that  $B=\set{v_1,v_2}$ with $v_1:=a_1\vee a_2<a_1\vee a_3 =a_2\vee a_3 =:v_2$.  Since \eqref{eq:brTTvRfhnZk} holds for  $\set{a_1,a_2,a_4}$, $\set{a_1\vee a_4, a_2\vee a_4} \subseteq \set{v_1,v_2}$, and $a_1\vee a_2=v_1<v_2$, we have that $a_1\vee a_4=a_2\vee a_4=v_2$. Hence,  
using $a_3\vee a_4\in\set{v_1,v_2}$, $v_1<v_2=a_2\vee a_3=a_2\vee a_4$, and applying  \eqref{eq:brTTvRfhnZk} to  $\set{a_2,a_3,a_4}$, we obtain $a_3\vee a_4=v_1$. Therefore, 
$v_2=a_2\vee a_3 \leq (a_1\vee a_2)\vee (a_3\vee a_4)=v_1\vee v_1=v_1$ is a contradiction, as required.


\begin{figure}[ht] 
\centerline{ \includegraphics[width=\figwidthcoeff\textwidth]{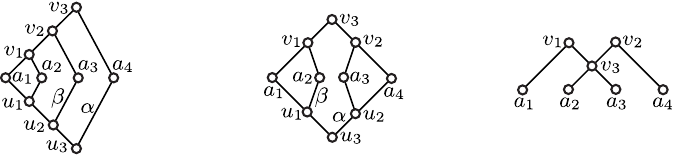}} 
\caption{Illustrations for the proof of Lemma \ref{lemma:nofour}}\label{figszeder}
\end{figure}

\logicbreak
Case ($\beta$): We assume that $B$ is a three-element chain.  Let $B=\set{v_1,v_2,v_3}$ with $v_1<v_2<v_3$. First, we show that there exists a three-element $Y\subseteq\set{a_1,\dots,a_4}$ that generates a sublattice containing both $v_1$ and $v_2$. Suppose the contrary. Then, by symmetry, we may assume that $a_1\vee a_2=v_1$.  Applying  \eqref{eq:brTTvRfhnZk} to $\set{a_1,a_2,a_3}$ and $\set{a_1,a_2,a_4}$, the nonexistence of such a $Y$ implies $a_1\vee a_3=v_3$,   $a_2\vee a_3=v_3$, 
$a_1\vee a_4=v_3$,  and  $a_2\vee a_4=v_3$. Since five of the six possible joins are different from $v_2$,  the remaining join equals $v_2$, that is, $a_3\vee a_4=v_2$. Hence
\begin{equation*}
v_3 = a_1\vee a_3 \leq (a_1\vee a_2)\vee (a_3\vee a_4)=v_1\vee v_2=v_2,
\end{equation*}
which is a contradiction. This shows that $Y$ exists. 

By symmetry again, we may assume that $Y=\set{a_1,a_2,a_3}$ with $a_1\vee a_2=v_1$. By \eqref{eq:brTTvRfhnZk} applied to $Y$, we have $a_1\vee a_3=v_2$ and $a_2\vee a_3=v_2$.
To obtain $v_3$, we need $a_4$. That is, $a_s\vee a_4=v_3$ for some $s\in\set{1,2,3}$. 
If we had $a_2\vee a_4\neq v_3$ and $a_3\vee a_4\neq v_3$, then $s$ would be $1$ and \eqref{eq:brTTvRfhnZk} would fail for $\set{a_1,a_2,a_4}$. 
Thus $a_2\vee a_4=v_3$ or $a_3\vee a_4=v_3$. 
If $a_2\vee a_4=v_3$, then $a_2\vee a_3=v_2<v_3$ together with 
 \eqref{eq:brTTvRfhnZk} applied to $\set{a_2,a_3,a_4}$
yields $a_3\vee a_4=v_3$. 
Similarly, 
if $a_3\vee a_4=v_3$, then $a_2\vee a_3=v_2<v_3$, whence 
 \eqref{eq:brTTvRfhnZk} applied to $\set{a_2,a_3,a_4}$
yields $a_2\vee a_4=v_3$. Therefore, $a_2\vee a_4=v_3$ and $a_3\vee a_4=v_3$. 
We have established five equations: $a_1\vee a_2=v_1$, $a_1\vee a_3=v_2$, $a_2\vee a_3=v_2$, $a_2\vee a_4=v_3$, and $a_3\vee a_4=v_3$.  
The sixth one, $a_1\vee a_4=v_3$, follows by applying  \eqref{eq:brTTvRfhnZk} to $\set{a_1,a_2,a_4}$. 

Hence, the joins of the form $a_i\vee a_j$ are correctly visualized on the left side of Figure \ref{figszeder}. In other words, for $i\neq j$, $a_i\vee a_j=v_{\max{\set{i,j}}-1}$. The meets are also correctly visualized, since we know from \eqref{eq:brTTvRfhnZk} that the meets behave dually to the joins. For $i\in\set{1,2,3}$, let $u_i$ be defined by the rule that $a_j\wedge a_t=u_i$ if and only if $a_j\vee a_t=v_i$; see the figure. 
It follows from \eqref{eq:brTTvRfhnZk} (or, alternatively, from applying Lemma \ref{lemma:pdgbpLsT}\eqref{lemma:bpdgbpLsT} in a similar way to its use for \eqref{eq:Scorrdep}) that $S:=\{a_1,\dots,a_4,v_1,v_2,v_3,u_1,u_2,u_3\}$ is a sublattice of $L$, that  $\cd L\leq \cd S$, and that $S$ is correctly depicted on the left of Figure \ref{figszeder}.
Since $[u_3,a_4]\pup[u_2,v_3]\supseteq [u_2,a_3]$ implies $\con_S(u_2,a_3)=:\beta\leq \alpha:=\con_S(u_3,a_4)$, Lemma \ref{lemma:khVnMtTlkBr} applied to $S$ gives that $\cd L\leq \cd S\leq 6/64$. This is a contradiction, as required.

\logicbreak
Case ($\gamma$): We assume that $B=\set{v_1,v_2,v_3}$ with $v_1<v_3$, $v_2<v_3$, and $v_1\parallel v_2$.
By \eqref{eq:brTTvRfhnZk}, 
there is no $\set{i,j,k}\subseteq\set{1,\dots,4}$ such that $v_1=a_i\vee a_j$ and $v_2=a_j\vee a_k$. Hence, we may assume that $v_1=a_1\vee a_2$ and $v_2=a_3\vee a_4$. Applying \eqref{eq:brTTvRfhnZk}, it straightforwardly follows that 
both the join-subsemilattice and the meet-subsemilattice generated by $\set{a_1,\dots, a_4}$ are correctly depicted in the middle of Figure \ref{figszeder}.
We claim that $u_i\nleq v_{3-i}$ for $i\in\set{1,2}$. By symmetry, it suffices to consider $i=1$. 
If we had $u_1\leq v_2$, then---since $a_4\leq v_2$ and \eqref{eq:brTTvRfhnZk} applied to $\set{a_1,a_2,a_4}$ yields $u_1\vee a_4=v_3$---we would obtain $v_3= u_1\vee a_4\leq u_1\vee v_2=v_2$, which is impossible.
Hence  $u_i\nleq v_{3-i}$ for $i\in\set{1,2}$, indeed. 
This allows us to apply Lemma \ref{lemma:pdgbpLsT}\eqref{lemma:bpdgbpLsT} in a way similar to its use for \eqref{eq:Scorrdep} to conclude that $S:=\{a_1,\dots,a_4,u_1,u_2,u_3,v_1,v_2,v_3\}$ is a sublattice of $L$, that $S$ is correctly drawn as a \emph{lattice} in the middle of Figure \ref{figszeder}, and that $\cd L\leq \cd S$. 
Since $[u_3,u_2]\pup [u_1,v_3]\supseteq [u_1,a_2]$, we have that $\con_S(u_3,u_2)=:\alpha\geq \beta:=\con_S(u_1,a_2)$. 
Thus, Lemma \ref{lemma:khVnMtTlkBr} implies $\cd L\leq \cd S\leq 6/64$. This inequality is a contradiction, as required.

\logicbreak
Case ($\delta$): We assume that $B=\set{v_1,v_2,v_3}$ with $v_3<v_1$, $v_3<v_2$, and $v_1\parallel v_2$.
Apart from indexing, the situation is drawn on the right in Figure \ref{figszeder}. Since $v_1$, $v_2$, $v_3$, and $v_1\vee v_2$ are four distinct elements of $\Jr L$, Lemma \ref{lemma:nofourjred} implies $\cd L\leq 4/64$. Again, we have obtained a contradiction.

Since each of the four cases leads to a contradiction, the proof of Lemma \ref{lemma:nofour} is complete.  
\end{proof}

\begin{lemma}\label{lemma-3a-cHnb} If a finite lattice $L$ has a three-element antichain and $\cd L>6/64$, then
\begin{itemize}
\item either $\Core L\cong M_3$ and $\cd L=8/64$  (see \ceqref{th2} in Theorem \ref{thm:main}), 
\item or $\Core L\cong N_{5,5}$ and $\cd L=7/64$ (see \ceqref{th4} in Theorem \ref{thm:main}).
\end{itemize}
\end{lemma}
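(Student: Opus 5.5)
Assume $\{a_1,a_2,a_3\}$ is a three-element antichain in $L$ and $\cd L>6/64$. If $\Core L\cong M_3$, then \eqref{eq:cdLcdCorL} gives $\cd L=\cd{M_3}$. A direct computation shows $|\Con{M_3}|=2$ because $M_3$ is simple, so $\cd{M_3}=2/2^4=8/64$, and the first alternative holds. So assume $\Core L\ncong M_3$. Then Lemma \ref{lemma-3a-cha} shows that the antichain generates a sublattice $S\cong N_{5,5}$, whose elements are $u,c,a_1,a_2,a_3,b,v$ with $c=a_1\wedge a_2$ and $b=a_1\vee a_2$. In $S$ we have $\Jr S=\{b,v\}$ and $\Mr S=\{c,u\}$, and these are also join-reducible, respectively meet-reducible, in $L$. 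Lemma \ref{lemma:sJqsM} gives $|\Jr L|=|\Mr L|$, and Lemma \ref{lemma:nofourjred} gives that this common value is $2$ or $3$.

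\textbf{Value $2$.} Then $\Jr L\cup\Mr L\subseteq S$, so by Lemma \ref{lemma:pdgbpLsT}\eqref{lemma:apdgbpLsT} the lattice $L$ is a dismantlable extension of $S$ and $\cd L\le\cd{N_{5,5}}$. I compute $\Con{N_{5,5}}$ via Lemma \ref{lemma:ggrprsprd} and \eqref{eq:knRlhKrhkn}, and expect $|\Con{N_{5,5}}|=7$, that is, $\cd{N_{5,5}}=7/64$.

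I next show $\Core L\cong N_{5,5}$. Since every element outside $S$ is doubly irreducible, any $x\notin S$ incomparable to some element of $S$ would create, through its meets and joins with elements of $S$, a new reducible element or a second antichain. For the latter case I would use Lemma \ref{lemma:nofour} (no four-element antichain) together with Lemma \ref{lemma-3a-cha} applied to the new antichain to get a contradiction. Therefore every element outside $S$ lies either below $u$, above $v$, or inside a covering edge of $S$ as a subdivision. A subdivision of an edge of $S$ yields an edge gluing or a longer circle. In that situation I would apply Lemma \ref{lemma:khVnMtTlkBr} (perspectivities in $N_{5,5}$ push a short edge under a longer one) or \eqref{eq:dsmntlExtsm} with a concrete computation to get $\cd L\le 6/64$. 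Hence $\idl u$ and $\fil v$ are chains and $[u,v]=S$, so $\Core L\cong N_{5,5}$ and $\cd L=7/64$ by \eqref{eq:cdLcdCorL}.

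\textbf{Value $3$.} By duality assume there is an extra element $p\in\Jr L\setminus S$, and an extra $q\in\Mr L\setminus S$. Set $T:=S\cup\Jr L\cup\Mr L$. This is a sublattice with $\cd L\le\cd T$ by Lemma \ref{lemma:pdgbpLsT}\eqref{lemma:apdgbpLsT}, and $|T|\le 9$. I would locate $p$ relative to $S$: it is either above $v$, below $u$, or comparable with elements of $S$ in a way forced by the absence of new antichains. In each configuration I would exhibit two edges satisfying the hypothesis of Lemma \ref{lemma:khVnMtTlkBr} (now $|\Jr T|=3$), giving $\cd L\le6/64$. Alternatively, $p\notin[u,v]$ forces a glued sum of $N_{5,5}$ with another non-chain block. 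A congruence-determining edge set then has size $|T|-4$ with a comparable pair, so Lemma \ref{lemma:lfkbbZmbt} applies.

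\textbf{Main obstacle.} I expect the main obstacle to be this last case analysis (and the subdivision argument in the value-$2$ case), because the placement of the extra reducible elements must be controlled carefully.
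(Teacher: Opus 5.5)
\textbf{The case $|\Jr L|=|\Mr L|=3$ is not handled.} This is the heart of the paper's proof, which first establishes $|\Jr L|=|\Mr L|=2$. Your proposal only announces a case analysis on where the extra reducible elements sit, without exhibiting any edges. The set-up is also inaccurate: the third join- or meet-reducible element need not lie outside the copy $S_0$ of $N_{5,5}$ (your $S$), e.g.\ $a_3$ can be join-reducible in $L$. Likewise, ``$p\notin[u,v]$ forces a glued sum'' is unproved.

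The missing idea is that the extra elements never need to be placed. Inside $N_{5,5}$ one has $[b,v]\pdn[u,a_3]\pup[c,v]\supseteq[c,a_1]$, so $\con(c,a_1)\le\con(b,v)$ while $a_1\le b$. This is exactly the configuration of Lemma~\ref{lemma:khVnMtTlkBr}; the only difficulty is that these pairs need not be edges. The paper resolves this in $T=S_0\cup\Jr L\cup\Mr L$. At most two new elements can subdivide at most two of the edges $(u,c)$, $(u,a_3)$, $(b,v)$, $(a_3,v)$, so by self-duality one may assume $b\prec_T v$ or $a_3\prec_T v$. Take a $T$-edge $(x,y)$ in $[c,b]$ and $u\prec_T z\le c$. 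In the first case $\con_T(x,y)\le\con_T(b,v)$ with $y\le b$. In the second case $\con_T(x,y)\le\con_T(a_3,v)=\con_T(u,z)$ with $z\le x$, using Lemma~\ref{lemma:oppedges} and $[a_3,v]\pdn[u,b]$. Lemma~\ref{lemma:khVnMtTlkBr} then finishes.

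Your remark ``now $|\Jr T|=3$'' needs proof, and the paper's application of Lemma~\ref{lemma:khVnMtTlkBr} inside this sublattice tacitly needs the same. A sublattice containing $\Jr L$ need not keep those elements join-reducible. For $L=N_{5,5}\gsum B_4$ one gets $T=S_0\cup\set{1}$ with $|\Jr T|=2$; there $\cd L=3.5/64$, so this only shows the step needs an argument. The safe route is to stay in $L$, where $|\Jr L|=3$. Take an $L$-edge $(x',y')$ in $[c,a_1]$. Since $\con_L(x',y')$ is join-irreducible, hence join-prime, in the distributive lattice $\Con L$, and $\con_L(b,v)$ is the join of the congruences of the edges of a maximal chain in $[b,v]$, some $L$-edge $(s,t)$ with $b\le s$ satisfies $\con_L(x',y')\le\con_L(s,t)$. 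Lemma~\ref{lemma:khVnMtTlkBr} then applies to these two $L$-edges directly.

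\textbf{In the case $|\Jr L|=|\Mr L|=2$} your outline matches the paper, but your first tool is unavailable. Lemma~\ref{lemma:khVnMtTlkBr} needs $|\Jr L|\ge 3$, and $N_{5,5}$ itself shows it fails with two: it has the comparable edges $(c,a_1)$ and $(b,v)$ above, yet $\cd{N_{5,5}}=7/64$. So only the computational route works, and it must be carried out in three steps.
\begin{enumerate}
\item Every superset of $S_0$ is a sublattice $X$ with $\cd L\le\cd X$, by Lemma~\ref{lemma:pdgbpLsT}.
\item An $x\in[u,v]$ outside the eight edge-intervals of $S_0$ satisfies $x\parallel a_3$. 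By Lemma~\ref{lemma:nofour} and symmetry/duality you may take $a_2<x$ and $x\parallel a_1$. Then $\set{b,x,a_3}$ is an antichain all of whose pairwise joins equal $v$ (because $\Jr L=\set{b,v}$), contradicting Lemma~\ref{lemma-3a-cha}.
\item For a subdividing element $d$ with $u<d<c$, $u<d<a_3$, or $c<d<a_1$ (the other positions follow by symmetry and duality), a direct computation gives $\cd{S_0\cup\set d}$ equal to $5.5/64$ or $4/64$.
\end{enumerate}
Your phrase ``edge gluing or a longer circle'' does not describe this width-$3$ situation.
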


\begin{proof} 
Let $L$ be an $n$-element finite lattice with $\cd L>6/64$ and a three-element antichain $\set{a_1,a_2,a_3}$.
Since $\Core{M_3}=2/2^{5-1}=8/64$ and we know from \eqref{eq:cdLcdCorL} that $\cd L=\cd{\Core L}$, we may assume that $\Core L\ncong M_3$.

By Lemma \ref{lemma-3a-cha}, 
the sublattice $S_0$ generated by $\set{a_1,a_2,a_3}$ is isomorphic to $N_{5,5}$. Hence,  $|\Jr L|\geq 2$ and $|\Mr L|\geq 2$.  Apart from indexing, we have $b:=a_1\vee a_2<v:=a_1\vee a_3=a_2\vee a_3$ and $c:=a_1\wedge a_2>u:=a_1\wedge a_3=a_2\wedge a_3$; see the left part of Figure \ref{figdio}.
We claim that
\begin{equation}
|\Jr L|=|\Mr L| = 2.
\label{eq:mfMpfmGzPbk}
\end{equation}
Seeking a contradiction, suppose that \eqref{eq:mfMpfmGzPbk} fails.  
We know from Lemma \ref{lemma:sJqsM} that $|\Jr L|=|\Mr L|$. Hence Lemma \ref{lemma:nofourjred} and the failure of \eqref{eq:mfMpfmGzPbk} imply $|\Jr L|=|\Mr L| = 3$. 
Therefore, since $\set{u,c}=\Mr {S_0}\subseteq \Mr L$ and 
 $\set{b,v}=\Jr {S_0}\subseteq \Jr L$, there are unique elements $p$ and $q$ (not necessarily distinct and not necessarily outside $S_0$) such that $\Mr L=\set{u,c,p}$ and $\Jr L=\set{b,v,q}$.


\begin{figure}[ht] 
\centerline{ \includegraphics[width=\figwidthcoeff\textwidth]{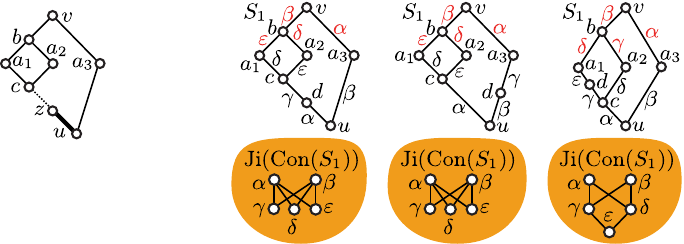}} 
\caption{A part of $S$ on the left and three possibilities for $S_1$
}\label{figbarack}
\end{figure}

Let $S:=S_0\cup \set{p,q}$. Since $\Mr L\cup \Jr L\subseteq S$, Lemma \ref{lemma:pdgbpLsT}\eqref{lemma:apdgbpLsT} yields that $S$ is a sublattice of $L$ and $\cd L\leq \cd S$. 
Since $S$ has been obtained from $S_0$ by adding only at most two new elements, at least two of the four prime intervals (that is, two-element intervals) $[u,c]_{S_0}$, $[u,a_3]_{S_0}$, $[b,v]_{S_0}$, and $[a_3,v]_{S_0}$ remain prime intervals of $S$. Thus, duality allows us to assume that $a_3\prec_S v$ or $b\prec_S v$ in $S$.
Before inspecting these two cases, pick an element $z\in S$ such that $u\prec_S z\leq c$, and take an edge $(x,y)\in\Edge S$ with $c\leq x\prec_S y\leq b$.  
The situation is shown on the left of Figure \ref{figbarack}, but note that the elements $p$, $q$, $x$ and $y$ are not specified and need not be drawn.
We are going to compute in $S_0\cong N_{5,5}$, but the intervals are understood in $S$.

First, assume $b\prec_S v$. Then $[b,v]\pdn [u,a_3]\pup [c,v]\supseteq [c,b]\supseteq[x,y]$ implies  $\con_S(x,y)\leq \con_S(b,v)$. Thus, letting $(x,y,b,v,S)$ play the role of $(u_1,v_1,u_2,v_2,L)$, we obtain from Lemma \ref{lemma:khVnMtTlkBr} that  
$\cd L\leq \cd S\leq 6/64$.

Second, assume $a_3\prec_S v$. With $(c,a_3,z)$ playing the role of $(a,b,x)$, Lemma \ref{lemma:oppedges} gives $\con_S(u,z)=\con_S(a_3,v)$. Thus 
$[a_3,v]\pdn [u,b]\supseteq[x,y]$ implies  $\con_S(x,y)\leq \con_S(a_3,v)=\con_S(u,z)$. Hence, letting the quadruple $(u,z,x,y,S)$ play the role of $(u_1,v_1,u_2,v_2,L)$, we obtain from Lemma \ref{lemma:khVnMtTlkBr} that $\cd L\leq \cd S\leq 6/64$. 

We have seen that regardless of whether  $a_3\prec_S v$ or $b\prec_S v$, we obtain $\cd L\leq  6/64$,  a contradiction that proves \eqref{eq:mfMpfmGzPbk}.

It follows from  \eqref{eq:mfMpfmGzPbk}, $\Mr{S_0}=\set{c,u}\subseteq\Mr L$, $\Jr{S_0}=\set{b,v}\subseteq\Jr L$, and Lemma \ref{lemma:pdgbpLsT}\eqref{lemma:apdgbpLsT} that for every subset $X$ of $L$,
\begin{equation}
\text{if }S_0\subseteq X,\text{ then }X\text{ is a sublattice of }L\text{ and }\cd L\leq \cd X.
\label{eq:rnKnmrtKrcdKVf}
\end{equation}

Next, we claim that $[u,v]$ is the union of the eight intervals corresponding to the eight edges of $N_{5,5}$ drawn on the left of Figure \ref{figdio}, that is,
\begin{equation}
[u,v]=[u,c]\cup[c,a_1]\cup [c,a_2]\cup [a_1,b]\cup[a_2,b] \cup [b,v]\cup [u,a_3]\cup [a_3,v].
\label{eq:msmrSnkpkcp}
\end{equation}
To derive a contradiction, suppose that $x\in [u,v]$ is outside the union above. Then $x\parallel a_3$. Since $\set{a_1,a_2,a_3,x}$ is not an antichain by Lemma \ref{lemma:nofour}, $x$ is comparable with $a_1$ or $a_2$. 
If $x$ were comparable with both $a_1$ and $a_2$, then $a_1\parallel a_2$ would imply that either $x\geq a_1$ and $x\geq a_2$, whence $x\in[b,v]$, or $x\leq a_1$ and $x\leq a_2$, whence $x\in [u,c]$. Thus, $x$ is comparable with exactly one of $a_1$ and $a_2$. 
By symmetry and duality, we may assume that $a_1\parallel x$ and $a_2<x$. Since $x\leq b$ would give $x\in[a_2,b]$ and $b\leq x$ would give $a_1\leq x$, the set $\set{b,x,a_3}$ is a three-element antichain.
Since $\Jr L=\set{b,v}$ by \eqref{eq:mfMpfmGzPbk}, we obtain that, in addition to $b\vee a_3=v$,  the equalities $b\vee x=v$ and $x\vee a_3=v$ also hold. 
Hence the antichain $\set{b,x,a_3}$ contradicts Lemma \ref{lemma-3a-cha}, and we have proved 
\eqref{eq:msmrSnkpkcp}.

Assume that there is an element $x\in\Nar L\cap[u,v]$. Since $a_1\parallel a_3$, either $x\leq a_1$ and $x\leq a_3$, whence $x=u$,
or $x\geq a_1$ and $x\geq a_3$, whence $x=v$. Hence $\Nar L\cap[u,v]\subseteq\set{u,v}$, and $[u,v]$ is glued sum indecomposable.
If there were an element $y\in L$ such that $y\parallel u$, then $y\wedge u$ would be a third meet-reducible element of $L$, contradicting $c,u\in\Mr L$ and \eqref{eq:mfMpfmGzPbk}. Hence, $u\in\Nar L$. Dually, $v\in \Nar L$, whence $L=\idl u\gsum [u,v]\gsum \fil v$. Furthermore, $\idl u$ and $\fil v$ are chains by  \eqref{eq:mfMpfmGzPbk}. Thus, $\Core L=[u,v]$.

Next, we claim that $[u,v]=S_0$. Seeking a contradiction, suppose the contrary.
Then $S_0\subset [u,v]$, whence we can pick an element $d\in [u,v]\setminus S_0$. Let $S_1:=S_0\cup\set{d}$; it is a sublattice and $\cd L\leq\cd{S_1}$ by \eqref{eq:rnKnmrtKrcdKVf}.
Using duality, the symmetry between $a_1$ and $a_2$, and \eqref{eq:msmrSnkpkcp}, we may assume that $u<d<c$, $u<d<a_3$, or $c<d<a_1$.  A trivial argument, similar to the one proving \eqref{eq:Scorrdep}, shows that these possibilities are correctly diagrammed in Figure \ref{figbarack}.
Using Lemma \ref{lemma:ggrprsprd} and \eqref{eq:knRlhKrhkn}, it is straightforward to verify that, in each of the three possibilities, the labeling is correct, the poset $\Jir{\Con{S_1}}$ is correctly drawn, and $|\Con{S_1}|$ is $11$ or $8$. 
Thus $\cd{S_1}$ is $5.5/64$ or $4/64$, whence $\cd{S_1}<6/64$. Hence 
$\cd L<6/64$, a contradiction. This shows that $\Core L=[u,v]=S_0\cong N_{5,5}$, as required. 

Finally, based on Lemma \ref{lemma:ggrprsprd}, the poset $\Jir{\Con{N_{5,5}}}$ is shown in Figure \ref{figdio}. 
By \eqref{eq:knRlhKrhkn}, $|\Con L|=7$. Therefore $\cd L=7/64$, and \ceqref{th4}  in Theorem \ref{thm:main} holds.
This completes the proof of Lemma \ref{lemma-3a-cHnb}.
\end{proof}

\section{If the width of $L$ is at most 2}
The \emph{width} of a finite lattice $L$, denoted by $\width L$, is the maximum size of an antichain in $L$. This section deals with finite core lattices $L$ such that $\width L=2$ and $\cd L>6/64$. There are several ways to observe that such a lattice is necessarily \emph{planar} (that is, it has a planar diagram); for example, this follows from \eqref{eq:233planar} or Lemma 2.2 of Cz\'edli and Schmidt \cite{czgschtJH}. In this section, we always work with a \emph{fixed planar diagram} of $L$, and terms like left or right refer to this diagram. Although our treatment is visual at some points, the relevant theory of planar lattices is rigorously presented in Kelly and Rival \cite{kellyrival}. (Neither here nor later in the paper does the reader need to consult \cite{kellyrival}.) Let  $\Bndl L$ and  $\Bndr L$ denote the \emph{left boundary chain} and the \emph{right boundary chain} of (the fixed diagram of) $L$.

\begin{lemma}\label{lemma:blrspd}
For every finite lattice $L$  with $\width L=2$, we have that $L=\Bndl L\cup \Bndr L$.
\end{lemma}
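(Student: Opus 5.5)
The plan is to use the structure of planar diagrams together with the width-2 assumption. By the remark at the beginning of this section, $L$ is planar, and we work in the fixed planar diagram. We need to show that every element $x\in L$ lies on the left boundary chain $\Bndl L$ or on the right boundary chain $\Bndr L$. Suppose, for a contradiction, that some $x\in L$ is in neither. Then $x$ is an \emph{interior} element of the diagram: it lies strictly inside the region bounded by $\Bndl L$ and $\Bndr L$.

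The key step is to produce two elements $y\in\Bndl L$ and $z\in\Bndr L$ that are both incomparable with $x$ and incomparable with each other. Then $\set{x,y,z}$ is a three-element antichain, contradicting $\width L=2$.

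\textbf{Producing the antichain.} First, since $x\notin\Bndl L$, the element $x$ is not comparable with every element of $\Bndl L$. Indeed, a standard fact about planar lattices (Kelly and Rival \cite{kellyrival}) is that an element comparable with all members of a maximal chain $C$ can be added to $C$ only if it belongs to $C$. So if $x$ were comparable with all elements of $\Bndl L$, then $\Bndl L\cup\set x$ would be a chain, and maximality would give $x\in \Bndl L$, which is false. Hence there is a $y\in\Bndl L$ with $y\parallel x$, and similarly a $z\in\Bndr L$ with $z\parallel x$.

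It remains to arrange $y\parallel z$. Here I would use the left–right order of planar lattices: for incomparable $p,q$, either $p$ is to the left of $q$ or to the right, and an interior element $x$ lies to the right of every element of $\Bndl L\setminus\Bndr L$ incomparable to it, and to the left of every element of $\Bndr L\setminus\Bndl L$ incomparable to it. If $y\le z$, I would choose $y$ and $z$ more carefully. Take $y$ to be the largest element of $\Bndl L$ incomparable with $x$ and below $x\vee y$ in a suitable sense. Alternatively, consider the elements $x\wedge y$ and $x\vee y$ on the boundary and use the fact that the interval spanned contains a left part and a right part.

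A cleaner route is the following. If every element of $\Bndl L$ incomparable to $x$ were comparable to every element of $\Bndr L$ incomparable to $x$, then, using that both boundary chains are maximal chains running from $0$ to $1$, the union of these two sets would form a chain. In that case $x$ would be separated from one side. That would contradict $x$ lying strictly between the two boundaries, since by planarity some edge through $x$ would have to cross a boundary edge.

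\textbf{Expected obstacle.} The main difficulty is making this last step rigorous without heavy planar-lattice theory. I expect that proving $y\parallel z$ for a suitable choice is the only real work. If the direct argument fails, I would instead use the characterization that an element lies on $\Bndl L$ if and only if it has no element strictly to its left, which is again from Kelly and Rival \cite{kellyrival}. Then $x$ not being on either boundary gives elements $y$ to its left and $z$ to its right. Transitivity of the left–right relation on pairwise incomparable elements, together with the fact that "$y$ left of $x$ left of $z$" forces $y\parallel z$ unless some comparability with $x$ arises, then yields the antichain $\set{y,x,z}$.
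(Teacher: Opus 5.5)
Your final paragraph is, in substance, the paper's proof. You pick $y\in\Bndl L$ and $z\in\Bndr L$ incomparable with $x$, which is possible by maximality of the boundary chains. You observe that $y$ lies to the left of $x$ and $z$ to the right of $x$ (Kelly and Rival, Propositions 1.6 and 1.7). Your main-line remark that an interior element lies to the right of every incomparable element of $\Bndl L\setminus\Bndr L$ is exactly this ingredient. Then Zilber's transitivity of \quot{to the left of} gives that $y$ is to the left of $z$.

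What needs fixing is how you handle the step you call \quot{the only real work}. The relation \quot{to the left of} holds only between incomparable elements. Zilber's transitivity is the strong statement that $y$ left of $x$ and $x$ left of $z$ imply $y$ left of $z$, with \emph{no} prior hypothesis on $y$ and $z$. So it yields $y\parallel z$ outright. Your phrasing \quot{transitivity \dots\ on pairwise incomparable elements} and \quot{unless some comparability with $x$ arises} reads as though $y\parallel z$ had to be assumed, which would be circular; moreover $y\parallel x$ and $x\parallel z$ are already known.

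If you do not want to quote the strong form, it takes one line. Were $y$ and $z$ comparable, a maximal chain $C$ through both would miss $x$. Then $x$ would lie to the right of $C$, since $y$ is left of $x$, and also to the left of $C$, since $x$ is left of $z$, which is impossible. Hence the case $y\le z$ never occurs, and no careful re-choice of $y$ and $z$ is needed. The edge-crossing \quot{cleaner route} is both unnecessary and not rigorous as stated, so drop it and commit to the transitivity argument.
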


\begin{proof} Suppose, for the sake of contradiction, that there exists an element $x\in L$ such that $x\notin \Bndl L\cup \Bndr L$. Since $\Bndl L$ is a maximal chain and $x\notin \Bndl L$, there is an element $y\in\Bndl L$ such that $y\parallel x$. Moreover, since $\Bndl L$ is the \emph{left} boundary chain, $y$ is to the left of $x$ (see the definition of the left boundary and Propositions 1.6 and 1.7 in Kelly and Rival \cite{kellyrival}). By left-right duality, we obtain an element $z\in\Bndr L$ such that $x$ is to the left of $z$ and, in particular, $x\parallel z$.  By a result of Zilber, appearing as Proposition 1.7 in Kelly and Rival \cite{kellyrival}, the relation \quot{to the left of}  is transitive. Hence, $y$ is to the left of $z$ and, in particular, $y\parallel z$. Thus, $\set{y,x,z}$ is a 3-element antichain. This antichain violates $\width L=2$ and completes the proof of Lemma \ref{lemma:blrspd}.
\end{proof}

\begin{lemma}\label{lemma:w2}
Let $L$ be a finite lattice such that $\width L=2$ and $6/64<\cd L\leq 8/64$. 
Then one  of the possibilities 
 \ceqref{th3},  \ceqref{th5}, \ceqref{th6}, 
\ceqref{th7}, and \ceqref{th8} listed in  Theorem \ref{thm:main} holds.
\end{lemma}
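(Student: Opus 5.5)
Since $\cd L=\cd{\Core L}$ by \eqref{eq:cdLcdCorL}, and passing to the core keeps the width at most $2$, I will assume that $L$ is a core lattice. Write $L=L_1\gsum\dots\gsum L_t$ for its canonical glued sum decomposition; every summand is then glued-sum indecomposable and is not a chain. The congruences of a glued sum are obtained independently on the summands, so $\cd L=\prod_i \cd{L_i}$. Each $L_i$ has width $2$, so $\cd{L_i}\le 32/64$. The first step is to bound $t$. If $t\ge 4$, then $\cd L\le (1/2)^4=4/64$. If $t\in\set{2,3}$, the products that exceed $6/64$ are easy to list, using the known small values $32,20,16,14,11,10/64$ and, for everything else, the bound $\cd{L_i}\le 8/64$ of Lemma \ref{lemma:earlylist}. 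For $t=3$ only $B_4\gsum B_4\gsum B_4$ survives, which belongs to \ceqref{th3}. For $t=2$ the pairs with product in $(6/64,8/64]$ are $B_4\gsum X$ with $\cd X\in(12/64,16/64]$, giving $X\cong\chain2\times\chain3$ (in \ceqref{th3}) or $X\in\Circ 6$ (in \ceqref{th5}), and $N_5\gsum N_5$ (in \ceqref{th8}). Every other pair is either above $8/64$, and hence already covered by Lemma \ref{lemma:earlylist}, or at most $6/64$. So the real work is the case $t=1$, where $L$ is glued-sum indecomposable.

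For $t=1$, Lemma \ref{lemma:blrspd} gives $L=\Bndl L\cup\Bndr L$ in the fixed planar diagram. Lemma \ref{lemma:sJqsM} gives $|\Jr L|=|\Mr L|$, and Lemma \ref{lemma:nofourjred} bounds this common value by $3$. If it were $1$, then $L$ would be a single circle $\Circ n$, and Lemma \ref{lemma:earlylist} gives density above $8/64$ for every $n$, which is excluded. So $|\Jr L|=|\Mr L|\in\set{2,3}$. Think of $L$ as two boundary chains joined at $0$ and $1$, together with the common points and interior edges where the chains meet. The join-reducible elements are $1$ together with the tops of the regions, and the meet-reducible elements are $0$ together with the bottoms. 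Hence $L$ is decomposed into $2$ or $3$ \quot{cells}: $L$ is an edge gluing of two or three circle-lattices, or a planar configuration in which two cells share only a vertex but that vertex is not narrow. I would argue that the shared-vertex configuration cannot occur. The reason is that a doubly irreducible element lying on one boundary opposite the shared vertex yields comparable congruences between edges in the situation of Lemma \ref{lemma:khVnMtTlkBr}, which forces $\cd L\le 6/64$.

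So $L$ is an edge gluing of circles $C_1,\dots,C_s$, with $s\in\set{2,3}$, arranged along a chain. Applying Lemma \ref{lemma:circedgglu} repeatedly, $\cd L\le \cd{C_1}\cdot 2^{-(s-1)}$, with equality exactly when the glued-on pieces are copies of $B_4$. When $s=3$, strict loss occurs as soon as one piece is larger than $B_4$, and then the density drops to $\le 6/64$. Hence $s=3$ forces all three circles to be $B_4$, giving \ceqref{th3} with edge gluings. When $s=2$, write $L$ as an edge gluing of $C_1$ and $C_2$. If one of them is $B_4$, the density is $\cd{C_j}/2$ for the other circle, and this exceeds $6/64$ only for $C_j\in\set{B_4,N_5,\Circ 6}$. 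The case $C_j=B_4$ gives density $16/64$ and the case $C_j=N_5$ gives $10/64$, both excluded because they exceed $8/64$; the case $C_j\in\Circ 6$ gives \ceqref{th6}. If neither circle is $B_4$, I would use the refinement in the proof of Lemma \ref{lemma:circedgglu}, namely removing doubly irreducible elements via \eqref{eq:dsmntlExtsm}, to show that two copies of $N_5$ give exactly $6.5/64$, computed directly from $\Jir{\Con L}$ with Lemma \ref{lemma:ggrprsprd}. If some circle has at least $6$ elements, the dismantlable-extension monotonicity drops the density to at most that of $N_5$ glued to $N_5$ with an extra element removed, and a direct count then gives $\le 6/64$. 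This yields \ceqref{th7}.

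The main obstacle is the structural step in the case $t=1$: showing rigorously that a glued-sum-indecomposable width-$2$ lattice with at most three join-reducible elements is an edge gluing of circles. This needs control over how the two boundary chains touch. They may meet in single points or share whole edges, and only the configurations where consecutive cells share exactly one edge survive. I would first try to show that any other touching pattern produces a narrow element, contradicting indecomposability, or else produces two edges $(u_1,v_1),(u_2,v_2)$ with comparable congruences in the position required by Lemma \ref{lemma:khVnMtTlkBr}. A secondary difficulty is the exact congruence counts for the two edge gluings of $N_5$, but these are finite computations.
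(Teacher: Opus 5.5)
\emph{Verdict: there is a genuine gap.} Your treatment of $t\ge 2$ matches the paper. For $t=1$, bounding the number of cells through $|\Jr L|\le 3$ (Lemma \ref{lemma:nofourjred}) is a valid substitute for the paper's estimate $\cd L\le 2^{-(h+1)}$.

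The gap is the structural step for $t=1$. You leave it open, and your picture of it is off. When $t=1$ we have $\Bndl L\cap\Bndr L=\Nar L=\set{0,1}$, so the two boundary chains never meet or share edges away from $0$ and $1$. There are no \quot{touching patterns} to control.

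The missing idea is to look at the edges $(x,y)$ with $\set{x,y}\cap\set{0,1}=\emptyset$ whose endpoints lie on different boundary chains; the paper calls these chord edges. By Lemma \ref{lemma:blrspd}, $L=\Bndl L\cup\Bndr L$, so every element lies on the chain through $x$ or on the chain through $y$. Hence every element is $\le y$ or $\ge x$. This gives $L=\idl y\cup\fil x$ with $\idl y\cap\fil x=\set{x,y}$, that is, $L$ is the edge gluing of $\idl y$ and $\fil x$ along $(x,y)$. Iterating over the $h$ chord edges presents $L$ as an iterated edge gluing of $h+1$ circles. Moreover, the join-reducible elements are exactly $1$ and the tops of the chord edges, and distinct chord edges have distinct tops. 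So $|\Jr L|=h+1$, which is precisely what your counting argument needs and does not yet have.

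Your plan to rule out, via Lemma \ref{lemma:khVnMtTlkBr}, a configuration in which two cells meet only in a non-narrow vertex cannot succeed, because that configuration occurs with density $8/64$. Take boundary chains $0\prec a_1\prec a_2\prec a_3\prec 1$ and $0\prec b_1\prec b_2\prec b_3\prec 1$ with chord edges $a_1\prec b_2\prec a_3$.
\begin{itemize}
\item The cells $\idl{b_2}$ and $\fil{b_2}$ meet only in $b_2$, and $b_2$ is not narrow since $b_2\parallel a_2$.
\item This $L$ is the edge gluing of $\idl{a_3}\cong\chain 2\times\chain 3$ and $\fil{b_2}\cong B_4$, so it falls under \ceqref{th3}.
\item It is distributive with four join-irreducible elements, so $\cd L=16/128=8/64$.
\end{itemize}
It needs no separate treatment, since it is still an iterated edge gluing of circles.

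Two smaller points:
\begin{itemize}
\item For $s=3$, a strict inequality alone does not give $\cd L\le 6/64$. Start the iterated gluing from a circle $C\ncong B_4$ and use $\cd C\le 20/64$ to get $\cd L\le 5/64$.
\item For $s=2$ with neither circle $B_4$, the lattice to reduce to is an edge gluing of a member of $\Circ 6$ with $N_5$. That is an element added to one $N_5$, not removed. Its density $4.75/64$ must be verified for every placement of the gluing edge, as the paper does by counting the $19$ ideals of $\Jir{\Con L}$.
\end{itemize}
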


\begin{proof} Since $\cd L=\cd{\Core L}$  by \eqref{eq:cdLcdCorL} and $\width L=\width{\Core L}$, we may assume that $L$ is a core lattice. 
Furthermore, $|L|>1$, since otherwise $\cd L$ would be $64/64$. 
Let $L=L_1\gsum\dots\gsum L_t$ be the canonical glued sum decomposition of $L$; here $t\in\Nplu$ and none of the $L_1$, \dots, $L_t$ is a chain. 
Since $L=\Core L$ is interesting only up to $\rgsiso$, we may assume that $\cd{L_1}\geq \dots \geq \cd{L_t}$. Cz\'edli \cite[Lemma 2]{czg864}, applied to  $L=L_1\gsum\dots\gsum L_t$, asserts that
\begin{equation}
\cd  L=\cd{L_1}\cdots\cd{L_t},\text{ where }
\cd{L_1}\geq \dots \geq \cd{L_t}.
\label{eq:lmMrlCsdj}
\end{equation}

We know from Cz\'edli \cite[Lemma 10]{czg864}, or even from Cz\'edli \cite{czg-lconl2}, that $\cd{L_i}\leq 1/2$ for $i\in\set{1,\dots,t}$. Hence $\cd L\leq 1/2^t$. 
Thus, since $6/64<\cd L$, we have that $t\leq 3$.

First, assume that $t=3$. Then  $\cd L\leq 1/2^3=8/64$. If $\cd{L_i}<1/2=32/64$ for some $i\in\set{1,2,3}$, 
then $\cd{L_i}\leq 20/64$ by Lemma \ref{lemma:earlylist}, whence 
\begin{equation*}6/64<\cd L= \cd{L_1}\cd{L_2}\cd{L_3} \leq 1/2\cdot1/2\cdot 20/64=5/64,
\end{equation*} 
which is a contradiction. Hence $\cd{L_i}=1/2$  and, by Lemma \ref{lemma:earlylist}, $L_i\cong B_4$ for all $i\in\set{1,2,3}$. Furthermore, $\cd L=1/2^3=8/64$ and $\Core L=L=B_4\gsum B_4\gsum B_4$. That is, \ceqref{th3} holds, as required.

Second, assume that $t=2$. Observe that $|\Nar{L_1}|=|\Nar{L_2}|=2$, since otherwise $t$ would be larger than $2$. 
As the first subcase, we assume $\cd{L_1}=1/2$. 
By Lemma \ref{lemma:earlylist}, $L_1\cong B_4$.
Since  $6/64< \cd L= \cd{L_1}\cd{L_2}\leq 8/64$, we have
$12/64< \cd{L_2}\leq 16/64$. Hence, taking $|\Nar{L_2}|=2$ into account as well, $L_2=\Core {L_2}$ satisfies
\eeqref{lem4} or \eeqref{lem6} of Lemma \ref{lemma:earlylist}. That is, 
$L_2$ is an edge gluing of two copies of $B_4$ and $\cd{L_2}=16/64$, or $L_2$ belongs to $\Circ 6$ and $\cd{L_2}=14/64$.  Thus, taking $L_1\cong B_4$,  $\cd{B_4}=1/2$, and \eqref{eq:lmMrlCsdj} into account as well, 
 \ceqref{th3} or  \ceqref{th5} holds for  $L=\Core L$, as required.

In the second subcase of the case $t=2$, we assume that $\cd{L_1}<1/2$. Then $\cd{L_1}=20/64$ or $\cd{L_1}\leq 16/64$
by Lemma \ref{lemma:earlylist}, and  $\cd{L_2}\leq \cd{L_1}$ by \eqref{eq:lmMrlCsdj}. 
We focus on $\cd{L_2}$. 
If we had $\cd{L_2}\leq 16/64$, then  
$6/64<\cd L=\cd{L_1}\cd{L_2}\leq 20/64\cdot 16/64=5/64$ would be a contradiction. Hence, $\cd{L_2}= \cd{L_1}=20/64$, and Lemma \ref{lemma:earlylist} implies $L_1\cong L_2\cong N_5$. Hence $L\cong N_5\gsum N_5$  and $\cd L=\cd{L_1}\cd{L_2}=20/64\cdot 20/64=6.25/64$. Therefore \ceqref{th8} of Theorem \ref{thm:main} holds, as required.


\begin{figure}[ht] 
\centerline{ \includegraphics[width=\figwidthcoeff\textwidth]{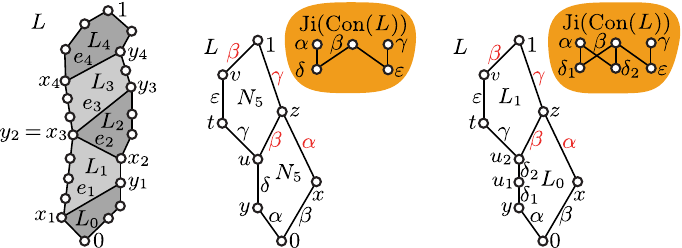}} 
\caption{Illustrations for the proof of Lemma \ref{lemma:w2} }\label{figszolo}
\end{figure}

Third,  we assume $t=1$. So in the rest of the proof, $L=\Core L$, $6/64<\cd L\leq 8/64$, and $|\Nar L|=2$.  The intersection $\Bndl L\cap\Bndr L$ of the two boundary chains is $\Nar L=\set{0,1}$. Furthermore, $\Bndl L\cup\Bndr L=L$ by Lemma \ref{lemma:blrspd}.
Let us call an edge $(x,y)\in\Edge L$ a \emph{chord edge} if $\set{x,y}\cap\set{0,1}=\emptyset$ and $x$ lies in one of the two boundary chains while $y$ lies in the other. For example, the chord edges of $L$ on the left of Figure \ref{figszolo} are $e_1=(x_1,y_1)$, \dots, $e_4=(x_4,y_4)$.
Since the diagram is planar, no two distinct chord edges intersect at their internal geometric points.

Let $h$ be the number of chord edges, and let $n:=|L|$.
If we had $h=0$, then $L$ would belong to $\Circ n$, whence  
\eeqref{lem2}, \eeqref{lem3}, 
\eeqref{lem6}, \eeqref{lem7}, and \eeqref{lem10} of Lemma \ref{lemma:earlylist} would imply $\cd L>8/64$, which would be a contradiction. Hence $h>0$.

For a chord edge $e=(x,y)$, $L$ is decomposed as the edge gluing of $\idl y$ and $\fil x$ (along the edge $e$). If $\idl y$ or $\fil x$ contains a chord edge of $L$ distinct from $e$, then this decomposition can be repeated. 
Therefore, there are integers $n_0, n_1,\dots, n_h$ greater than 3 and lattices $L_i\in\Circ{n_i}$ and $M_i$ for $i\in\set{0,\dots,h}$ such that $M_0=L_0$, $M_h=L$, and $M_j$ is an edge gluing of $M_{j-1}$ and $L_j$ for $j\in\set{1,\dots, h}$.
For example, for $L$ on the left of Figure \ref{figszolo}, we have $(n_0,n_1,n_2,n_3,n_4)=(5,7,5,7,6)$,
$L_0=M_0=\idl{y_1}$, $L_4=\fil{x_4}$, $L_i=[x_i,y_{i+1}]$ for $i\in\set{1,2,3}$, $M_4=L$, and $M_j=\idl{y_{j+1}}$ for $j\in\set{1,2,3}$.

By Lemma \ref{lemma:earlylist}, $\cd{M_0}=\cd{L_0}\leq 1/2$. Furthermore, 
Lemma \ref{lemma:circedgglu}\eqref{lemma:acircedgglu} yields that  $\cd{M_j}\leq \cd{M_{j-1}}/2$ for $j\in\set{1,\dots,h}$. Hence a trivial induction on $j$ gives  $\cd L=\cd{M_h}\leq 1/2^{h+1}$. Combining this inequality with $6/64<\cd L$, we conclude that $h<3$. So we have to deal only with $h=1$ and $h=2$.

Assume that $h=1$. We know from the appropriate parts of Lemma \ref{lemma:earlylist}, or more directly from 
Cz\'edli \cite[Lemma 7]{czg864}, that $\cd{L_0}=(8+3/2^{n_0-7})/64$. Thus, the assumption $\cd L>6/64$ and
Lemma \ref{lemma:circedgglu}\eqref{lemma:acircedgglu} yield that
\begin{equation*}
\frac 6{64}<\cd L \leq \cd{L_0}/2< \frac{8+3/2^{n_0-7}}{64} \cdot \frac 12,
\end{equation*}
which implies $n_0\leq 6$. We obtain $n_1\leq 6$  similarly. Hence, $n_0,n_1\in\set{4,5,6}$. 
We may assume that $n_0\geq n_1$, since otherwise we could work with the dual of $L$.

If $(n_0,n_1)=(6,4)$, then $\cd L=7/64$ by Lemma \ref{lemma:earlylist}\eeqref{lem6} and Lemma \ref{lemma:circedgglu}\eqref{lemma:bcircedgglu}, whence  \ceqref{th6} of Theorem \ref{thm:main} holds. 
If $(n_0,n_1)=(5,5)$, then 
a straightforward computation, which is illustrated in the middle of Figure \ref{figszolo},  yields that $\cd L=6.5/64$, and so \ceqref{th7} is fulfilled; note that the details of a similar (but slightly longer) computation will soon be given for the right part of  Figure \ref{figszolo}, and also that $L$ need not be isomorphic to the lattice depicted in the middle of the figure. 
 
Since $\cd L\leq 8/64$, \eeqref{lem4} and \eeqref{lem8} of Lemma \ref{lemma:earlylist}
exclude the possibility that $(n_0,n_1)\in \{(4,4), (5,4)\}$. Excluding the remaining possibility that  $(n_0,n_1)\in \set{(5,6), (6,6)}$ requires more work.

The right part of Figure \ref{figszolo} visualizes how to rule out the case  $(n_0,n_1)=(6,5)$.
Although $L_0$ and $L$ need not be isomorphic to the lattices depicted in the figure, we can always choose the notation as follows. In $L_1$, let $z$ denote the unique atom such that $\jipair z$ (defined in \eqref{eq:pkmncShZLt}) is the gluing edge, and let $t$ be the other atom. In $L_0$, let $x$ be the unique atom such that $[\locov z,z]\pdn [\locov x,x]$, and let $y$ denote the other atom of $L_0$. Define $\alpha:=\con(\jipair y)$, $\beta:=\con(\jipair x)$, and $\gamma:=\con(\jipair t)$. 
Regardless of which of the two members of $\Circ 6$ is $L_0$  and how the gluing edge in $L_0$ and $L_1$ is situated, denote the two elements of $\Jir{L_0}\setminus\set{x,y}$ by $u_1$ and $u_2$. 
Set $\delta_i:=\con(\jipair{u_i})$ for $i\in\set{1,2}$. 
Let $v$ be the unique element of $\Jir{L_1}\setminus\set{z,t}$, and set $\epsilon:=\con(\jipair v)$. 
Since $\Jir L=\set{y,x,u_1,u_2,t,v}$, it follows from  \eqref{eq:mKflvGmgfll} that $\set{\alpha,\beta,\gamma,\delta_1,\delta_2,\epsilon}=\Jir{\Con L}$.
Using Lemma \ref{lemma:ggrprsprd}(A), it is straightforward to draw the poset $(\Jir{\Con L};\leq)$; see the figure. To count the ideals of this poset, note that $2^3=8$ ideals are disjoint from $\set{\alpha,\beta,\gamma}$; $2^2=4$ contain $\beta$;  
2 contain $\alpha$ but neither $\beta$ nor $\gamma$; 
$2^2=4$ contain $\gamma$ but neither $\alpha$ nor $\beta$; and exactly 1 contains $\alpha$ and $\gamma$ but not $\beta$. Hence $|\Idl{\Jir {\Con L};\leq}|=8+4+2+4+1=19$, and \eqref{eq:knRlhKrhkn} gives that $|\Con L|=19$. 
Therefore $\cd L=19/2^{|L|-1}=4.75/64$, which contradicts the assumption $6/64< \cd L$ and excludes the case  $(n_0,n_1)=(6,5)$.

Finally, if $(n_0,n_1)=(6,6)$, then $L$ 
has a sublattice $L'$ such that the parameters of $L' $ satisfy $(n_0,n_1)=(6,5)$, and $L$ is a dismantlable extension of $L'$; see \eqref{eq:dismnTlb}. We have seen that $\cd{L'}=4.75/64$, and we know from \eqref{eq:dsmntlExtsm}  that $\cd L\leq \cd{L'}$. Thus $\cd L\leq 4.75/64$, which contradicts the inequality $6/64< \cd L$ and excludes the possibility $(n_0,n_1)=(6,6)$.
This completes the case $h=1$ (within the case $t=1$).

Next, we assume that $h=2$. If $(n_0,n_1,n_2)=(4,4,4)$, then $\cd L=1/2^3=8/64$ by Lemma \ref{lemma:circedgglu}\eqref{lemma:bcircedgglu} and $L$ satisfies \ceqref{th3}, as required.

Thus it suffices to rule out all cases with $(n_0,n_1,n_2)\neq(4,4,4)$.  Let, say, $n_2\neq 4$. Then $n_2\geq 5$. Since $L_2\in\Circ{n_2}$ is a dismantlable extension of $N_5$, \eqref{eq:dsmntlExtsm} implies $\cd{L_2}\leq \cd{N_5}$. Combining this inequality with Lemma \ref{lemma:earlylist}\eeqref{lem3}, we have $\cd {L_2}\leq 20/64$.
Denote the sublattice $L_1\cup L_2$ in $L$ by $T$; then $T$ is an edge gluing of $L_1$ and $L_2$, while $L$ is an edge gluing of $L_0$ and $T$. Hence,
Lemma \ref{lemma:circedgglu}\eqref{lemma:acircedgglu} yields that $\cd T\leq \cd{L_2}/2\leq 10/64$ and 
$\cd L\leq \cd T/2\leq 5/64$,  which contradicts $6/64<\cd L$. 
We similarly obtain the same contradiction when $n_0$ or $n_1$ is distinct from 4, except that in this case 
we first obtain $M_1$ as an edge gluing of $L_0$ and $L_1$, and then $L$ is an edge gluing of $M_1$ and $L_2$. 
Therefore, the possibility that $(n_0,n_1,n_2)\neq(4,4,4)$ is ruled out, and the proof of Lemma \ref{lemma:w2} is complete.
\end{proof}

\section{Final deductions}
From the lemmas established earlier in the paper, we can now easily derive the results stated in Sections \ref{sect:intro} and \ref{sect:mainthm}.

\begin{proof}[Proof of Theorem \ref{thm:jm}] Let $L$ be a finite lattice with $\cd L>3/32$.
For $k\in\Nnul$, let $\ncu k$ and $\ncd k$ denote the number of elements of $L$ with exactly $k$ \ul upper \ul covers and the number of elements with exactly $k$ \ul lower \ul covers, respectively.
It follows from \eqref{eq:zsszmrdLjVntWz} and Lemma \ref{lemma:sJqsM} that  $\ncu 1=|\Jir L|=|\Mir L|=\ncd 1$.  The equality $\ncu 0=1=\ncd 0$ is trivial. Combining the already established equations with the fact, given by  Lemma \ref{lemma:nofour}, that no element has more than three upper covers or lower covers, we obtain that  
\begin{equation}
\ncu k=0=\ncd k\text{ for all }k\geq 4 \text{ and } \ncu k=\ncd k  \text{ for all }k\in\Nnul \setminus\set{2,3}.
\label{eq:hsvgDcszNjn}
\end{equation}
To handle the remaining values $k\in\set{2,3}$, we distinguish two cases.

First, assume that $\ncu 3=0=\ncd 3$. Then \eqref{eq:hsvgDcszNjn} together with  Lemma \ref{lemma:sJqsM} yields that $\ncu 2=|\Jr L| =|\Mr L|= \ncd 2$. Hence \jmref{jm2} holds.

Second, assume that  $\ncu 3>0$ or  $\ncd 3>0$. By duality, we may assume that $\ncd 3>0$. Then an interval $[u,v]$ of $L$ is isomorphic to $M_3$ by Lemma \ref{lemma:3lowcov}.  
Since the three lower covers of $v$ generate $[u,v]\cong M_3$, and $M_3$ is not isomorphic to $N_{5,5}$, the conclusion of  Lemma \ref{lemma-3a-cha} fails in this case, whence its hypothesis cannot hold. 
Hence $\Core L\cong M_3$. Thus $\ncu 3=1=\ncd 3$ and  $\ncu 2=0=\ncd 2$. Hence \jmref{jm2} holds again, and therefore it holds in both cases.

Finally, \jmref{jm1}, which asserts that $\ncu 1=\ncd 1$, follows from  \jmref{jm2}. The proof of Theorem \ref{thm:jm} is complete.
\end{proof}

\begin{proof}[Proof of Observation \ref{obs:R6}]
Based on \eqref{eq:knRlhKrhkn},  Lemma \ref{lemma:ggrprsprd}, and the middle part of Figure \ref{figalma}, it is straightforward to see that $|\Con{R_6}|=|\Idl{\Jir{\Con{R_6}}}|=3$. Hence $\cd{R_6}=3/32$. Clearly, 
$|\Jir{R_6}|=3\neq 4 =|\Mir{R_6}|$.
\end{proof}

\begin{proof}[Proof of Theorem \ref{thm:main}]
Let $L$ be a finite lattice with $\cd L>6/64$. We can assume  $\cd L\leq 8/64$, since otherwise \ceqref{th1} holds. Then $1<\width L<4$ by 
Lemmas \ref{lemma:earlylist} and \ref{lemma:nofour}, whence $\width L\in\set{2,3}$. 
Therefore Lemmas \ref{lemma-3a-cHnb} and \ref{lemma:w2}
imply Theorem \ref{thm:main}.
\end{proof}

\section{Conclusion and outlook}
We have proved that a finite lattice $L$ with congruence density $\cd L$ larger than $3/32$ has the same number of join-irreducible and meet-irreducible elements (that is, $|\Jir L|=|\Mir L|$), and pointed out that $\cd L=3/32$ does not imply  $|\Jir L|=|\Mir L|$.  
In fact, our analysis has yielded a slightly stronger conclusion.
Moreover, we have presented a structural description of finite lattices $L$ with $\cd L> 3/32$.
Equivalently, we have described finite lattices $L$ with more than $3\cdot 2^{|L|-6}$ congruences. 
We guess that, with a lot of work, the threshold $3/32$ in the description might be slightly improved in the future. However, improving the threshold to, say, $10^{-8}$ seems unlikely even with computer assistance; see also Footnote \ref{foot:imposs}.

To raise an open problem, whose difficulty is unclear at this moment, let $M_1$ be the one-element lattice, and let $M_k$ be the $(k+2)$-element modular lattice of length 2 for $2\leq k\in\Nplu$; see the right part of Figure \ref{figalma} for $k\leq 4$. 
Then $\width{M_k}=k$. Does every finite lattice $L$ satisfy $\cd L\leq \cd{M_{\width L}}$? It follows from Lemma \ref{lemma:earlylist} and Theorem \ref{thm:main} that the answer is positive for lattices $L$ with $\width L\leq 3$. 

There have been analogous investigations in which \quot{lattices} or \quot{congruences} have been replaced by other structures, including \quot{semilattices}, or by other compatible relations, such as the unary \quot{subuniverse} relation;  see, for example, 
Ahmed, Salih, and Hale \cite{delbrinsemilat}, 
  Ahmed and Horv\'ath \cite{delbrinKHE}, Cz\'edli \cite{czg-83Sub}, \cite{czg-semlatmancon}, and \cite{czgnear83}, Kwuida and Mure\c{s}an \cite{kuwidamuresan}, and the Abstract of Zaja, Haje, and Ahmed \cite{zaya-haje-delbrin}. 
After these antecedents, similar investigations are likely to appear in the future.

\end{document}